\newcommand{\mbb}{\mathbb}
\newcommand{\mbf}{\mathbf}
\newcommand{\Z}{\mbb Z}
\newcommand{\R}{\mbb R}
\renewcommand{\epsilon}{\varepsilon}
\newcommand{\what}{\widehat}
\newcommand{\oline}{\overline}
\newcommand{\w}{\mathbf w}
\newcommand{\supp}{\mathbbm{1}}
\newcommand{\FacetsetO}[2]{{\oline{\mathbf F}^{#1}_{(#2)}(M)}}
\newcommand{\Facetset}[2]{{\mathbf F^{#1}_{(#2)}(M)}}
\newcommand{\parp}{{\Pi}}
\DeclareMathOperator{\sgn}{sgn}
\DeclareMathOperator{\tsgn}{tsgn}
\DeclareMathOperator{\wsgn}{{\mbf w}sgn}
\newcommand{\proj}{p_r}
\newcommand{\nproj}{\what p_{k}}
\DeclareMathOperator{\slice}{slice}
\newcommand{\rRestrict}{\slice_r} 
\newtheorem{theorem}{Theorem}[section]
\newtheorem{prop}[theorem]{Proposition}
\newtheorem{open}[theorem]{Open}
\newtheorem{lemma}[theorem]{Lemma}
\newtheorem{corollary}[theorem]{Corollary}
\theoremstyle{definition}
\newtheorem{remark}[theorem]{Remark}
\newtheorem{definition}[theorem]{Definition}
\newtheorem{example}[theorem]{Example}
\newcolumntype{C}{>{\centering\arraybackslash}p{10pt}}
\newcolumntype{X}{>{\centering\arraybackslash}p{22pt}}
\definecolor{orange}{rgb}{0.898, 0.621, 0.0}
\definecolor{skyblue}{rgb}{0.336, 0.703, 0.910}
\definecolor{bluishgreen}{rgb}{0, 0.617, 0.449}
\definecolor{yellow}{rgb}{0.937, 0.890, 0.258}
\definecolor{blue}{rgb}{0, 0.445, 0.695}
\definecolor{red}{rgb}{0.832, 0.367, 0}
\definecolor{purple}{rgb}{0.797, 0.473, 0.652}
\newcommand{\AJM}[1]{{\color{blue}\sf [#1]}} 
\title{Fragmenting any Parallelepiped into a Signed Tiling}
\author{Joseph Doolittle}
\address{Institute of Geometry, TU Graz, Graz, Austria}
\email{jdoolittle@tugraz.at }
\author{Alex McDonough}
\address{Department of Mathematics, University of California, Davis, CA 95616}
\email{amcd@ucdavis.edu}
\thanks{The first author was supported by the Austrian Science Fund FWF, Project P 33278. 
Their work was also supported by the ANR-FWF International Cooperation Project PAGCAP, funded by the FWF Project I 5788. }
\date{\today}
\begin{document}

\maketitle

\begin{abstract}

It is broadly known that any parallelepiped tiles space by translating copies of itself along its edges. 
In earlier work relating to higher-dimensional sandpile groups, the second author discovered a novel construction which fragments the parallelepiped into a collection of smaller tiles. 
These tiles fill space with the same symmetry as the larger parallelepiped.
Their volumes are equal to the components of the multi-row Laplace determinant expansion, so this construction only works when all of these signs are non-negative (or non-positive).

In this work, we extend the construction to work for all parallelepipeds, without requiring the non-negative condition.
This naturally gives tiles with negative volume, which we understand to mean canceling out tiles with positive volume.
In fact, with this cancellation, we prove that every point in space is contained in exactly one more tile with positive volume than tile with negative volume.
This is a natural definition for a signed tiling.

Our main technique is to show that the net number of signed tiles doesn't change as a point moves through space.
This is a relatively indirect proof method, and the underlying structure of these tilings remains mysterious.

\end{abstract}

\section{Introduction}

To motivate our work, we begin with an illustrative two-dimensional example of our main construction. 
Consider the matrices
\[K = \begin{bmatrix} 1 & 2 \\ -1 & 3\end{bmatrix}, \hspace{1cm} S_{\{1\}}(K) = \begin{bmatrix} 1 & 0 \\ 0 & -3 \end{bmatrix},\hspace{.5 cm} \text{ and }\hspace{.5cm}S_{\{2\}}(K) = \begin{bmatrix} 0 & 2 \\ 1 & 0 \end{bmatrix}.\]

The matrices $S_{\{1\}}(K)$ and $S_{\{2\}}(K)$ are called the \emph{fragment matrices} of $K$. 
They are obtained by negating the second row and then zeroing out a diagonal. 
Directly from the Laplace expansion for determinants, we can see that \(-\det(K) = \det(S_{\{1\}}(K)) + \det(S_{\{2\}}(K))\) 

The matrices $K$, $S_{\{1\}}(K)$, and $S_{\{2\}}(K)$ can be used to construct a periodic tiling of $\R^2$. 
Given a matrix $N$, let $\Pi(N)$ be the (half-open) \textit{fundamental parallelepiped} of $N$ (see Definition~\ref{def:halfopen} for details). 
Consider the parallelepipeds $\Pi(S_{\{1\}}(K))$ and $\Pi(S_{\{2\}}(K))$, along with their translates by all of the integer combinations of columns of $K$. 
These tiles completely fill $\R^2$ with no gaps or overlap, and produce the periodic tiling described in Figure~\ref{fig:basictile}. 

\begin{figure}
\begin{center}
\begin{tikzpicture}[scale = 0.5]
    \tikzstyle{every node} = [shape = none,fill=none,inner sep=1pt,minimum size = .1mm];
\begin{scope}
    \clip (-5,-5) rectangle (5,5);
    
\draw[dotted] (-5,5) -- (5,-5);
\draw[dotted] (-3,8) -- (7,-2);

\foreach \i in {-5,-4,-3,-2,-1,0,1,2,3,4,5}{
\foreach \j in {-2,-1,0,1,2}
{
\begin{scope}[shift = {(\i +2*\j,-\i + 3*\j)}]
\draw[fill=red!30] (0,0) -- (1,0) --(1,-3) -- (0,-3) --cycle;
\end{scope}}}

\foreach \i in {-5,-4,-3,-2,-1,0,1,2,3,4,5}{
\draw[dotted] (-5+\i*2,5 + \i*3) -- (5+\i*2,-5+\i*3);
\draw[dotted] (-20+\i,-30 - \i) -- (20+\i,30 - \i);}

\draw[line width = 1 mm] (0,0) -- (1,0) --(1,-3) -- (0,-3) --cycle;
\node[shape = circle, minimum size = 2 mm, fill = black] at (0,0){};
\end{scope}
\begin{scope}[shift = {(12,0)}]
    \clip (-5,-5) rectangle (5,5);
    
\foreach \i in {-5,-4,-3,-2,-1,0,1,2,3,4,5}
\foreach \j in {-2,-1,0,1,2}
{
\begin{scope}[shift = {(\i +2*\j,-\i + 3*\j)}]
\draw[fill=blue!20] (0,0) -- (2,0) -- (2,1) -- (0,1) -- cycle;
\end{scope}}
\draw[line width = 1 mm] (0,0) -- (2,0) --(2,1) -- (0,1) --cycle;
\node[shape = circle, minimum size = 2 mm, fill = black] at (0,0){};

\foreach \i in {-5,-4,-3,-2,-1,0,1,2,3,4,5}{
\draw[dotted] (-5+\i*2,5 + \i*3) -- (5+\i*2,-5+\i*3);
\draw[dotted] (-20+\i,-30 - \i) -- (20+\i,30 - \i);}
\end{scope}
\end{tikzpicture}
\caption{This is the tiling obtained by translating the fundamental parallelepipeds of $S_{\{1\}}(K)$ and $S_{\{2\}}(K)$ by integer linear combinations of the columns of $K$. Translates of $S_{\{1\}}(K)$ are given on the left in orange and translates of $S_{\{2\}}(K)$ are given on the right in blue. When these partial tilings are combined, we get a full periodic tiling of $\R^2$. }
\label{fig:basictile}
\end{center}
\end{figure}


This tiling is a two dimensional example of a construction which was introduced by the second author to define \textit{matrix-tree multijections}~\cite{multijections,alexthesis}. 
This construction can be applied to any invertible $(r+k)\times(r+k)$ matrix $M$, and produces a collection of $\binom{r+k}{r}$ \emph{fragment matrices} of $M$. 
When the determinants of the fragment matrices are all non-negative (or all non-positive), translating them by integer linear combinations of the columns of $M$ produces a periodic tiling of $\R^{r+k}$. 

In this paper, we prove that the elegant tiling structure of the fragment matrices is still present even without the restriction on $M$ that all the fragment matrices have non-negative determinant. 
In particular, while the translates do not always form a traditional tiling with no overlap or gaps, they always produce a \emph{signed tiling}.

To illustrate this signed version of the tiling, we give another $2$-dimensional example. 
This time, the determinants of the fragment matrices have opposite signs. 

Let
\[L = \begin{bmatrix} 1 & 2 \\ 1 & 5\end{bmatrix}, \hspace{1cm} S_{\{1\}}(L) = \begin{bmatrix} 1 & 0 \\ 0 & -5 \end{bmatrix},\hspace{.5 cm} \text{ and }\hspace{.5cm}S_{\{2\}}(L) = \begin{bmatrix} 0 & 2 \\ -1 & 0 \end{bmatrix}.\]

As in the previous example, the fragment matrices $S_{\{1\}}(L)$ and $S_{\{2\}}(L)$ are formed by negating the second row and zeroing a diagonal. 
Next, we consider translates of the fragment matrices by integer linear combinations of the columns of $L$. 
In this case, the tiles no longer perfectly fill space, and instead overlap, see Figure~\ref{fig:signedtile}.

In our previous example, the determinants of $S_{\{1\}}(K)$ and $S_{\{2\}}(K)$ were both positive.
In this example, $S_{\{1\}}(L)$ is positive, but $S_{\{2\}}(L)$ is negative.
Moreover, the positively signed tiles overlap.
Nevertheless, an elegant tiling structure can still be found. 

\begin{figure}
\begin{center}
\begin{tikzpicture}[scale = 0.5]
    \tikzstyle{every node} = [shape = none,fill=none,inner sep=1pt,minimum size = .1mm];
    
\begin{scope}
    \clip (-5,-5) rectangle (5,5);
\foreach \i in {-20,...,20}
\foreach \j in {-4,...,4}
{
\begin{scope}[shift = {(\i +2*\j,\i + 5*\j)}]
\draw[fill=red!40] (0,0) -- (0,1) --(1,1) -- (1,0) --cycle;
\draw[fill=red!40] (1,3) -- (1,4) --(0,4) -- (0,3) --cycle;
\draw[fill=red!100] (0,1) -- (0,3) -- (1,3) -- (1,1) -- cycle;
\end{scope}}
\node[shape = circle, minimum size = 2 mm, fill = black] at (0,0){};
\draw[line width = 1 mm] (0,0) -- (1,0) --(1,-5) -- (0,-5) --cycle;

\foreach \i in {-10,-9,-8,-7,-6,-5,-4,-3,-2,-1,0,1,2,3,4,5,6,7,8,9,10}{
\draw[dotted] (-20+\i*2,-20 + \i*5) -- (20+\i*2,20 + \i*5);
\draw[dotted] (-20+\i,-50 + \i) -- (20+\i,50 + \i);}
\end{scope}

\begin{scope}[shift = {(12,0)}]
    \clip (-5,-5) rectangle (5,5);
\foreach \i in {-20,...,20}
\foreach \j in {-4,...,4}
{
\begin{scope}[shift = {(\i +2*\j,\i + 5*\j)}]
\draw[fill=blue!40] (0,0) -- (2,0) -- (2,-1) -- (0,-1) -- cycle;
\end{scope}}
\node[shape = circle, minimum size = 2 mm, fill = black] at (0,0){};
\draw[line width = 1 mm] (0,0) -- (2,0) --(2,-1) -- (0,-1) --cycle;

\foreach \i in {-10,-9,-8,-7,-6,-5,-4,-3,-2,-1,0,1,2,3,4,5,6,7,8,9,10}{
\draw[dotted] (-20+\i*2,-20 + \i*5) -- (20+\i*2,20 + \i*5);
\draw[dotted] (-20+\i,-50 + \i) -- (20+\i,50 + \i);}
\end{scope}

\end{tikzpicture}
\caption{On the left is the tiling obtained by translating the fundamental parallelepipeds of $S_{\{1\}}(L)$ by integer combinations of columns of $L$. 
The darker regions indicate where two parallelepipeds overlap, while the lighter region is the portion covered by a single parallelepiped. 
On the right is the tiling obtained by translating the fundamental parallelepipeds of $S_{\{2\}}(L)$ by integer combinations of columns of $L$. 
This time, there are no overlaps, but the white region is formed by gaps between parallelepipeds. 
By Theorem~\ref{thm:mainthm}, the shaded region on the right precisely corresponds to the darker region on the left.}
\label{fig:signedtile}
\end{center}
\end{figure}

Consider the two partial tilings given in Figure~\ref{fig:signedtile}. Every point in the plane is covered by either one translate of $\Pi(S_{\{1\}}(L))$ or two translates of $\Pi(S_{\{1\}}(L))$ and one translate of $\Pi(S_{\{2\}}(L))$. 
This means that if we define translates of $\Pi(S_{\{1\}}(L))$ to be \textit{positive} tiles and translates of $\Pi(S_{\{2\}}(L))$ to be \textit{negative} tiles, then for any point $\mbf p \in \R^2$, the signed total of all tiles containing $\mbf p$ is always $1$. 

This surprising alignment of positive and negative tiles works in general. 
Reiterating the previous setting, we let \(M\) be an invertible \((r+k)\times (r+k)\) matrix.
We break this matrix into two parts, the first \(r\) rows and the last \(k\) rows.
The two tiles from the two dimensional case become \(\binom{r+k}{r}\) many tiles, indexed by which \(r\) columns are preserved in the top $r$ rows (see Definition~\ref{def:fragment} for details). 

We generalize the cancellation observed in the example with \(L\) by introducing a function \(f\). This function counts the number of positively signed tiles at a point, minus the number of negatively signed tiles at that point.

\vspace{2 ex}
\noindent\textbf{Theorem~\ref{thm:mainthm}.} \textit{
 The function \(f(\mbf p) : \R^{r+k} \to \Z\), defined by
    \begin{equation*}f(\mbf p) := \left(\sum_{T \in \mathbf T^+(M)}  \supp_{T}(\mbf p)\right) - \left(\sum_{T \in \mathbf T^-(M)}  \supp_{T}(\mbf p)\right),\end{equation*}
    is constant with value $(-1)^{k}\sgn(\det(M))$. 
}

In Section~\ref{sec:def}, we provide a glossary of all the notation we use. We then follow it with definitions and notation, which apply in more general settings than this work.
In Section~\ref{sec:construction}, we describe the general construction, introduce some specialized definitions, and state the main theorem. To prove the theorem, we develop two lemmas in the following sections
In Section~\ref{sec:average}, we compute the average value of \(f\).
In Section~\ref{sec:crossing}, we show that when crossing the boundary of a tile, the value of the function \(f\) is constant. This section is by far the longest and most technical.
In Section~\ref{sec:together}, we combine our previous work and deliver the proof of the main theorem.
In Section~\ref{sec:proj}, we explore projections of these tilings, and give a glimpse of the high dimensional structure in a two dimensional document. 
Finally, in Section~\ref{sec:questions}, we consider future extensions and pose questions we think will be interesting to explore.


\section{Definitions and Notation}\label{sec:def}

\subsection{Glossary}

Many of the proofs in the paper require a large amount of notation, so we included this glossary to help the reader keep track of it all. 
This is hopefully as helpful to the readers as it has been for the authors.

\begin{itemize}
\item $r$ and $k$ are positive integers which we fix throughout the paper. \(r+k\) is the dimension of real space that we consider, as well as the size of \(M\).

\item $M$ is an invertible $(r+k)\times (r+k)$ real matrix which we fix throughout the paper. 

\item \(\w\) is a vector in $\R^{r+k}$ that we fix throughout the paper. We require $\w$ to be ``sufficiently generic'', meaning that it has no linear dependence with any of the many vectors we consider. See Section~\ref{sec:fixedvalues}.

\item $\w'$ and $\w''$ are the vectors obtained by restricting $\w$ to its first $r$ or last $k$ coordinates respectively. By construction, these vectors are also ``sufficiently generic'' in their respective spaces. 

\item $\sigma$, $\tau$, and $\gamma$ are all variables used for subsets of $[r+k]$. They will typically be used to represent subsets of size $r$, $r-1$, and $r+1$ respectively. 

\item $\what\sigma$ is the set $[r+k] \setminus \sigma$ for any $\sigma \subseteq [r+k]$. 

\item \(\mbf q\), \(\mbf p\), \(\mbf x\), and $\mbf v$ appear with and without indices, and are variable real vectors used in proofs. 

\item \(\mbf 0\) is the all zero vector.

\item \(\mbf z\) is a variable used for an integer vector. 

\item \(\parp(N)\) is the \emph{half-open parallelepiped} of the square matrix \(N\) with orientation \(\mbf w\). See Definition~\ref{def:halfopen}.

\item \(Z(N)\) is the \emph{zonotope} of the matrix \(N\) with orientation \(\w\), a generalization of the half-open parallelepiped. See Definition~\ref{def:halfopen}.

\item \(\mbf b_i\) and \(\mbf c_i\) (for $i \in [r+k]$) appear with and without overlines, and are fixed real vectors derived from the columns \(M\). See Section~\ref{sec:matrixdecomp}.

\item \(S_{\sigma}(M)\) is a \emph{fragment matrix} of $M$. Its columns are \(\mbf b_i\) if \(i \in \sigma\) and \(\oline{\mbf b}_i\) otherwise. See Definition~\ref{def:fragment}. 

\item \(C_{\sigma}(M) \) and \(\oline{C}_{\what\sigma}(M) \) are particular submatrices of \(S_{\sigma}(M)\). See Definition~\ref{def:ccbar}.

\item \({\bm \lambda}^\sigma\) is the representation of \(\w\) in the basis given by \(S_\sigma(M)\). See~\eqref{eq:lambda}.

\item \(\mbf h\) is a vector in the kernel of \(\oline{C}_{\what{\tau}}(M)\) depending only on \(\mbf w\) and \(\tau\). See Proposition~\ref{prop:h}.

\item $\mathcal T(\mbf z, \sigma)$ is the tile given by translating $\parp(S_\sigma(M))$ by $M\mbf z$.  See Definition~\ref{def:tile}.

\item \( \mbf T (M)\) is the set of all tiles in our construction (which are formed by translations parallelepipeds formed by fragment matrices). See Definition~\ref{def:tilesets}.

\item \( \mbf T^+(M)\) is the set of ``positive tiles'' in our construction, i.e., tiles whose corresponding fragment matrix has positive determinant. See Definition~\ref{def:tilesets}.

\item \( \mbf T^-(M)\) is the set of ``negative tiles'' in our construction, i.e., tiles whose corresponding fragment matrix has positive determinant. See Definition~\ref{def:tilesets}.


\item $\mathcal F(\mbf z,\sigma,j,s)$ is a facet of the tile $\mathcal T(\mbf z, \sigma)$, missing the \(j\)th vector, and on the \(s\) side of the tile. See Definition~\ref{def:facets}.

\item $\widetilde{\mathcal F}(\mbf z,\sigma,j,s)$ is an alternative parameterization of the facets, so they are easier to collect into hyperplanes. See Definition~\ref{def:tildefacet}.

\item \( \Facetset{~}{\mbf z, \tau}\) is a collection of facets of tiles which lie in a common hyperplane determined by \(\mbf z\) and \(\tau\). See Definition~\ref{def:facetcollections}.

\item \( \FacetsetO{~}{\mbf z, \gamma}\) is a collection of facets of tiles which lie in a common hyperplane determined by \(\mbf z\) and \(\gamma\). See Definition~\ref{def:facetcollections}.

\item \( \Facetset{\downarrow}{\mbf z,\tau} \) is a collection of facets in \( \Facetset{~}{\mbf z, \tau}\) which are facets of tiles which decrease the value of \(f\) when crossing the facet in the direction of \(\w\). See Definition~\ref{def:FupFdown}.

\item \( \Facetset{\uparrow}{\mbf z,\tau} \) is a collection of facets in \( \Facetset{~}{\mbf z, \tau}\) which are facets of tiles which increase the value of \(f\) when crossing the facet in the direction of \(\w\).

\item $\proj$ is the map that projects \(\R^{r+k}\) to the first $r$ coordinates.

\item $\nproj$ is the map that projects \(\R^{r+k}\) to the last $k$ coordinates. 

\item \(\wsgn\) is a function that detects if a facet is open or closed in the \(\w\) direction. See Definition~\ref{def:wsgn}.

\item \(\tsgn\) is a function that detects the sign of the determinant of the tile containing a facet. See Definition~\ref{def:tsgn}.

\item $\bigsqcup$ and $\biguplus$ both indicate kinds of \emph{disjoint union}. See Section~\ref{sec:distunion} for more details. 
\end{itemize}

\subsection{Matrices, Vectors, and Fixed Values}\label{sec:fixedvalues}

We write matrices using capital letters and will denote the $i^{th}$ column of $N$ by $N_i$. 
Otherwise, we will use lowercase bold font to indicate vectors, which will always be column vectors. As a general rule, if we write a bold lowercase letter with a subscript, e.g., $\mbf x_i$, then this refers to a vector among a collection of vectors. However, a non-bold lowercase letter with a subscript, e.g., $x_i$, might be used to refer to an entry of a particular vector. We write $\mbf 0$ for the all zeros vector and $\mbf e_i$ for the $i^{th}$ standard basis vector. 
Whenever we give a sum of intervals, this is always interpreted as Minkowski sum. 

Throughout this paper, we fix two integers $r,k$ to be global dimension parameters. 
In particular, \(r+k\) is what would normally be named \(d\), the ambient dimension.
We also fix an invertible $(r+k)\times (r+k)$ matrix $M$. 
Furthermore, we fix a \emph{sufficiently generic} vector \(\w\in \R^{r+k}\).
By \emph{sufficiently generic}, we assume that for any of the finitely many matrices we consider, $\w$ is not in the span of any collection of $r+k-1$ of their column vectors. 
In particular, for any matrix \(N\) under our consideration, all of the entries in the vector \(N^{-1}\w\) are non-zero.

Several of the objects that we study depend on the vector $\w$, but this dependency will disappear, so our results are all independent of this choice. 
For simplicity of notation, we typically omit $\w$ when defining objects. We also write $\w'$ and $\w''$ for the vectors made up of the first $r$ and last $k$ entries of $\w$ respectively. 

\subsection{Subsets and Signs}
We write $[n]$ for the set $\{1,2,\dots,n\}$, and $\binom{[n]}{i}$ for the subsets of $[n]$ of size $i$. 
Given some $\sigma \subseteq [n]$, we write $\what\sigma$ for the set $[n] \setminus \sigma$. 
We typically use the variable $\sigma$ to denote an element of $\binom{[r+k]}{r}$, the variable $\tau$ to denote an element of $\binom{[r+k]}{r-1}$, and the variable \(\gamma\) to denote an element of $\binom{[r+k]}{r+1}$. 
We will always consider subsets of the natural numbers to be ordered from smallest to largest, and write $\sigma(i)$ for the $i^{th}$ smallest element of $\sigma$ (beginning at 1). 
For example, let $\sigma = \{1,4,5\}$ be thought of as a subset of $[5]$, i.e., as an element of $\binom{[5]}{3}$. 
Then, $\sigma(1) = 1$, $\sigma(2) = 4$, and $\sigma(3) = 5$. 
Furthermore, we also have $\what \sigma = \{2,3\}$, with $\what\sigma(1) = 2$ and $\what\sigma(2) = 3$.

In this paper, we come across several different sign functions, which all output either $1$ or $-1$. 
For $x \in \R \setminus \{0\}$ we define $\sgn(x)$ to output $1$ if $x>0$ and $-1$ if $x<0$, the standard sign function. 

We are also interested in the sign of permutations. 
For a permutation $\rho$ of the set $[n]$, we define $\sgn(\rho)$ to be the parity of the number of transpositions. 
We will typically express a permutation as a list of disjoint subsets whose union is $[r+k]$. Continuing the example from the previous paragraph, suppose we wanted to compute $\sgn(\sigma\setminus \{4\},\{4\},\what\sigma)$. We would write this computation as \[\sgn(\sigma\setminus \{4\}, \{4\}, \what\sigma) =\sgn(\{1,5\}, \{4\},\{2,3\}) = \sgn(1,5,4,2,3) = -1,\]
since five transpositions are needed to reach the identity. 

Finally, there are two special sign functions, $\wsgn$ and $\tsgn$, whose definitions are given in Section~\ref{sec:updown}. 

\subsection{Two Kinds of Disjoint Union}\label{sec:distunion}

Let us now take a moment to clarify some confusing notation. The term \emph{disjoint union} can be used in two different ways in mathematics, and both perspectives are used in this paper. To minimize confusion, we will denote the two versions of disjoint union with two different symbols. For sets $A$ and $B$, we write $A\bigsqcup B$ for the set $A\cup B$ with the added restriction that $A\cap B = \emptyset$. We use the notation $A \biguplus B$ to indicate the other kind of disjoint union, where $A$ and $B$ are considered as separate objects.

For example, suppose that $A$ and $B$ are polytopes in $\R^{r+k}$. Then, $A \bigsqcup B$ is the set of points in $\R^{r+k}$ that are contained in either $A$ or $B$.  while we will usually think of $A \biguplus B$ as a set made up of two polytopes. 

\subsection{Parallelepipeds and Translations }\label{sec:parpipeds}

\begin{definition}\label{def:halfopen} 
Let $N$ be an $(r+k) \times m$ matrix for some $m \in \Z^+$. We define $Z(N)$ to be the set of $\mbf p \in \R^{r+k}$ such that for all sufficiently small $\epsilon >0$, the point $\mbf p + \epsilon\w$ is in 
\begin{equation}\label{eq:closure}\sum_{i \in [m]}\left\{x_iN_i: 0 \le x_i \le 1\right\}.\end{equation}
The set $Z(N)$ is called the \emph{(half-open column) zonotope of $N$}. 

We also sometimes work with matrices that have $r$ or $k$ rows instead of $r+k$ rows. In these cases, $Z(N)$ is defined analogously, but with $\w$ replaced with $\w'$ or $\w''$ respectively. 

When $N$ is a square matrix, we use the notation $\parp (N)$ in place of $Z(N)$. 
In this case, the zonotope $\parp(N)$ is the \emph{(half-open) parallelepiped} generated by $N$.

\end{definition}

In words, $\parp(N)$ is the parallelepiped formed by the columns of $N$, with half of the boundary removed. 
The boundary points of the closure of $\parp(N)$ are included in $\parp(N)$ if and only if shifting them by an arbitrary small positive amount in the direction of $\w$ maps into the interior of $\parp(N)$. 
Note that when $N$ is invertible, the parallelepiped $\parp(N)$ is $n$-dimensional and its volume ($n$-dimensional Lebesgue measure) is equal to the absolute value of the determinant of $N$. 
These definitions are given with specified dimension, but analogous definitions are appropriate in any dimension.

\begin{lemma}\label{lem:halfopen}

Suppose $N$ is an $(r+k)\times(r+k)$ matrix. If $N$ is invertible, then
\[\parp(N) = \sum_{i\in [r+k]} \left\{ x_i N_i : \begin{array}{l}0 < x_i \leq 1 \text{ if $(N^{-1}\w)_i < 0$}\\0 \leq x_i<  1  \text{ if $(N^{-1}\w)_i > 0$}\end{array} \right\},
\]
If $N$ is not invertible, then $\parp(N) = \emptyset$.
\end{lemma}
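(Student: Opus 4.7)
The plan is to work directly from Definition~\ref{def:halfopen} by expressing each candidate point $\mathbf{p}$ in the basis of columns of $N$ (when $N$ is invertible), which turns the condition ``$\mathbf{p} + \epsilon\w$ lies in the closed parallelepiped for all sufficiently small $\epsilon > 0$'' into a coordinatewise system of inequalities on the coefficients $x_i$. The non-invertible case should then fall out quickly from the genericity of $\w$.

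To dispose of the singular case first, observe that the closed parallelepiped in~\eqref{eq:closure} is contained in the column space $V := \mathrm{span}(N_1, \dots, N_{r+k})$, whose dimension is at most $r+k-1$. Then $V$ equals the span of some $r+k-1$ of the columns of $N$, and so by the genericity assumption on $\w$ we have $\w \notin V$. Consequently no ray $\{\mathbf{p} + \epsilon\w : \epsilon \in (0, \epsilon_0)\}$ can remain in $V$, let alone in the closed parallelepiped, giving $\parp(N) = \emptyset$.

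For invertible $N$, every $\mathbf{p} \in \R^{r+k}$ has a unique expansion $\mathbf{p} = \sum_i x_i N_i$, and setting $\lambda_i := (N^{-1}\w)_i$ yields $\mathbf{p} + \epsilon\w = \sum_i (x_i + \epsilon\lambda_i) N_i$. By uniqueness of coordinates in the basis $\{N_i\}$, this perturbed point lies in the closed parallelepiped exactly when $0 \le x_i + \epsilon\lambda_i \le 1$ for every $i$. Genericity of $\w$ rules out $\lambda_i = 0$ (otherwise $\w$ would lie in the span of the remaining $r+k-1$ columns). A one-variable perturbation argument then finishes things coordinatewise: when $\lambda_i > 0$ the lower bound at $\epsilon = 0$ can be attained (for $x_i = 0$ the perturbation moves into $(0,1)$), but the upper bound becomes strict ($x_i = 1$ would be pushed out of $[0,1]$), yielding $0 \le x_i < 1$; when $\lambda_i < 0$ the roles of the two endpoints reverse, yielding $0 < x_i \le 1$.

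I do not anticipate a real obstacle; the proof is largely bookkeeping. The one point that has to be handled carefully is reading off which endpoint becomes strict from the sign of $\lambda_i$, which is nothing more than tracking the direction in which the perturbation $\epsilon\lambda_i$ pushes each coordinate.
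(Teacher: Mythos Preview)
Your proof is correct and follows essentially the same approach as the paper's: write $\mathbf p + \epsilon\w$ in the column basis of $N$, reduce membership in the closed parallelepiped to coordinatewise inequalities $0 \le x_i + \epsilon\lambda_i \le 1$, and read off which endpoint is attained from the sign of $\lambda_i = (N^{-1}\w)_i$. Your treatment is slightly more explicit than the paper's in two places---you spell out why $\lambda_i \neq 0$ via the genericity assumption, and in the non-invertible case you argue via $\w \notin V$ rather than just saying a generic shift leaves a lower-dimensional set---but these are the same ideas.
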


\begin{proof}

    First, we consider the case where $N$ is not invertible. This implies that \eqref{eq:closure} is less than $(r+k)$-dimensional. This means that shifting any point in this region in a sufficiently generic direction must leave the region. In particular, $\parp(N) = \emptyset$.


If instead, \(N\) is invertible, we define $\mbf y = (y_1,\dots,y_{r+k})^\top= N^{-1}\mbf p$ for some \(\mbf p \in \R^{r+k}\). 
Then, for any real number $\epsilon$, it follows that
\[\mbf p + \epsilon\w = N \mbf y + \epsilon\w = N \mbf y + \epsilon (N (N^{-1} \w)) = N  (\mbf y + \epsilon N^{-1} \w) = \sum_{i \in [r+k]} (y_i + \epsilon(N^{-1}\w)_i)N_i.\]

By definition, $\mbf p + \epsilon\w$ is in the region defined in \eqref{eq:closure} if and only if for all $i$, we have $0 \le y_i + \epsilon(N^{-1}\w)_i \le 1$. This is true for all sufficiently small positive $\epsilon$ if $0 < y_i \le 1$ when  $(N^{-1}\w)_i<0$, and $0 \le y_i < 1$ when  $(N^{-1}\w)_i>0$
    \end{proof}


We present a simple observation about translating parallelepipeds, which will be the foundation of our construction.

\begin{lemma}\label{lem:basictiling}
    For any choice of $M$, we have 
    \begin{equation}\label{eq:basictiling}
    \mbb{R}^{r+k} = \bigsqcup_{\mbf {z} \in \mbb {Z}^{r+k}} \left(\parp(M) + M\mbf z\right).
    \end{equation}
\end{lemma}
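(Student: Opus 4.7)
The plan is to reduce the claim to the familiar statement that $\mathbb{R}$ is the disjoint union of translates of a half-open unit interval, once we pass to coordinates in the basis given by the columns of $M$. Because $M$ is invertible, every $\mathbf{p} \in \mathbb{R}^{r+k}$ has a unique expression $\mathbf{p} = M\mathbf{y}$, and the claim that $\mathbf{p}$ lies in $\Pi(M) + M\mathbf{z}$ will translate directly to a coordinate-wise condition on $\mathbf{y} - \mathbf{z}$.

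More precisely, I would set $\mathbf{y} = M^{-1}\mathbf{p}$ and, for each $i \in [r+k]$, let $I_i$ be the half-open unit interval $[0,1)$ if $(M^{-1}\mathbf{w})_i > 0$ and $(0,1]$ if $(M^{-1}\mathbf{w})_i < 0$ (recall that by genericity no entry vanishes). Since each $I_i$ has length one and the collection $\{I_i + n : n \in \mathbb{Z}\}$ partitions $\mathbb{R}$, there is a unique $z_i \in \mathbb{Z}$ with $y_i - z_i \in I_i$. Setting $\mathbf{t} := \mathbf{y} - \mathbf{z}$, Lemma~\ref{lem:halfopen} gives exactly $M\mathbf{t} \in \Pi(M)$, and hence $\mathbf{p} = M\mathbf{t} + M\mathbf{z} \in \Pi(M) + M\mathbf{z}$. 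This shows every point lies in at least one translate.

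For disjointness, suppose $\mathbf{p} \in (\Pi(M) + M\mathbf{z}_1) \cap (\Pi(M) + M\mathbf{z}_2)$. Then there exist $\mathbf{t}_1, \mathbf{t}_2$ with $M\mathbf{t}_j \in \Pi(M)$ and $\mathbf{y} = \mathbf{t}_j + \mathbf{z}_j$ for $j = 1,2$. By Lemma~\ref{lem:halfopen}, each $(t_j)_i$ lies in the interval $I_i$ defined above. Since $y_i - (z_1)_i$ and $y_i - (z_2)_i$ both lie in $I_i$ and differ by an integer, they must be equal, forcing $\mathbf{z}_1 = \mathbf{z}_2$. Thus each $\mathbf{p}$ lies in exactly one translate, giving the disjoint union.

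I do not expect a genuine obstacle here; the only subtlety is keeping track of the two possible shapes of the half-open interval $I_i$ dictated by $\sgn((M^{-1}\mathbf{w})_i)$, which is precisely what Lemma~\ref{lem:halfopen} encodes. The non-invertible case is already ruled out by the standing assumption that $M$ is invertible.
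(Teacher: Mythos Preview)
Your argument is correct and follows essentially the same approach as the paper: both reduce to the fact that the half-open unit cube tiles $\mathbb{R}^{r+k}$ and then apply the linear transformation $M$. The paper states this in a single sentence, while you spell out the details carefully using Lemma~\ref{lem:halfopen} to handle the coordinate-wise half-open intervals $I_i$; this extra care is appropriate but does not constitute a different method.
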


This lemma follows from the fact that the unit cube tiles space, and the displacement between cubes in this tiling is all \(\mbb Z\)-valued vectors.
The lemma describes this same tiling, after applying \(M\) as a linear transformation.

\begin{definition}
    For $S \subseteq \R^{r+k}$, let $\supp_S$ be the \emph{support function} of $S$. In particular, for $\mbf p \in \R^{r+k}$, we have 
    \[\supp_S(\mbf p) = \begin{cases}1 & \text{if $\mbf p \in S$,}\\
    0 & \text{otherwise.}\end{cases}\]
\end{definition}

The following elementary lemma gives an alternate way to express the idea that a collection of subsets cover $\R^{r+k}$ with no gaps or overlaps. 
\begin{lemma}\label{lem:support} 
    Let $\mathbf S$ be a collection of subsets of $\R^{r+k}$. 
    \[ \mbb R^{r+k} = \bigsqcup_{S \in \mathbf S} S\hspace{1 cm} \iff \hspace{1cm} \forall \mbf p \in \R^{r+k},~ \sum_{S \in \mathbf S} \supp_{S}(\mbf p) = 1.\]
\end{lemma}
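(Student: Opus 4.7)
The plan is to prove both directions of the biconditional by unpacking what each side says pointwise. The key observation is that since $\supp_S(\mbf p) \in \{0,1\}$ for every subset $S$ and every point $\mbf p$, the quantity $\sum_{S \in \mathbf S}\supp_S(\mbf p)$ is simply the number of sets in $\mathbf S$ that contain $\mbf p$ (possibly infinite if there are infinitely many). With this interpretation, the right-hand condition says exactly that every point is contained in precisely one set of $\mathbf S$, and the left-hand condition says (by the convention for $\bigsqcup$ recorded in Section~\ref{sec:distunion}) that the sets in $\mathbf S$ are pairwise disjoint and have union equal to $\R^{r+k}$. These two statements are manifestly equivalent.

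For the forward implication, I would fix $\mbf p \in \R^{r+k}$ and use the covering condition to produce some $S \in \mathbf S$ with $\mbf p \in S$, then use pairwise disjointness to see that no other set in $\mathbf S$ contains $\mbf p$; reading off the support functions gives $\sum_{S \in \mathbf S}\supp_S(\mbf p)=1$. For the converse, I would again fix $\mbf p$ and argue that since the sum of $\{0,1\}$-valued terms equals $1$, exactly one term is nonzero, so exactly one $S \in \mathbf S$ contains $\mbf p$. Existence of such an $S$ for every $\mbf p$ gives $\R^{r+k} = \bigcup_{S \in \mathbf S} S$, and uniqueness gives pairwise disjointness, which together yield $\R^{r+k} = \bigsqcup_{S \in \mathbf S}S$. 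There is no real obstacle here; the statement is essentially a reformulation of the definitions of $\bigsqcup$ and $\supp_S$, and the proof is a one-line unpacking in each direction.
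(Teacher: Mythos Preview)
Your proposal is correct; this lemma is a direct unpacking of the definitions of $\bigsqcup$ and $\supp_S$, exactly as you describe. The paper in fact does not supply a proof at all, labeling the statement as ``elementary'' and moving on, so there is no discrepancy in approach to report.
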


Combining Lemmas~\ref{lem:basictiling} and~\ref{lem:support}, we can immediately conclude that for any $\mbf p \in \R^{r+k}$, we have
\[\sum_{\mbf {z} \in \mbb {Z}^{r+k}} \supp_{(\parp(M) + M\mbf z)}(\mbf p) = 1.\]

\section{Signed Tiling Construction and Main Result}\label{sec:construction}
\subsection{Matrix Decompositions}\label{sec:matrixdecomp}

Recall that we fix positive integers $r$ and $k$ as well as an $(r+k)\times (r+k)$ matrix $M$. 
For \(i \in [r+k]\), we write \(\mbf c_i\) for the column vector of length \(r\) consisting of the first \(r\) entries of \(M_i\). 
We write \(-\oline {\mbf c}_i\) for the column vector of length \(k\) consisting of the last \(k\) entries of \(M_i\). 
The negative sign in the previous definition is slightly unexpected, but is required for the construction to work.

Additionally, for each $i \in [n]$, we write $\mbf b_i$ for the column vector of length $r+k$ whose first $r$ entries are the same as those in $\mbf c_i$, and whose last $k$ entries are $0$. 
Similarly, we write $-\oline{\mbf b}_i$ for the column vector whose first $r$ entries are $0$ and whose last $k$ entries are the same as those in $-\oline{\mbf c}_i$.
We summarize these definitions by the two following decompositions of \(M\). 
Note that the vertical lines are intended to help show the structure of the matrix, but do not have any mathematical meaning. 

\[M = \begin{bmatrix}\vline & \vline &  & \vline \\\mbf c_1 & \mbf c_2 & \hdots & \mbf c_{r+k}\\
\vspace{3pt}\vline & \vline &  & \vline \\
\vline & \vline &  & \vline \\
-\oline {\mbf c}_1 & -\oline {\mbf c}_2 & \hdots & -\oline {\mbf c}_{r+k}\\
\vline & \vline &  & \vline
\end{bmatrix} =  \begin{bmatrix}
\vrule height 33pt depth 0pt width 0pt \vline & \vline &  & \vline \\
\mbf b_1 &\mbf b_2 & \hdots & \mbf b_{r+k}\\
\vrule height 31pt depth 0pt width 0pt \vline & \vline &  & \vline \\
\end{bmatrix} + \begin{bmatrix}
\vrule height 33pt depth 0pt width 0pt \vline & \vline &  & \vline \\
-\oline{\mbf b}_1 & -\oline{\mbf b}_2 & \hdots & -\oline{\mbf b}_{r+k}\\
\vrule height 31pt depth 0pt width 0pt \vline & \vline &  & \vline \\
\end{bmatrix}.  \]

\begin{definition}\label{def:fragment}
Let $\sigma \in \binom{[r+k]}{r}$. The \emph{\(\sigma\)-fragment matrix} of \(M\), written $S_\sigma(M)$, is the matrix defined by
\[ S_\sigma(M)_i := \begin{cases} \mbf b_i & \text{ if $i \in \sigma$,}\\
\oline{\mbf b}_i & \text{ if $i \not \in \sigma$.} \end{cases}\]
We will sometimes refer to the parallelepiped $\parp(S_\sigma(M))$ as a \emph{fragment} of $M$.
\end{definition}

In other words, $S_\sigma(M)$ is the matrix obtained from $M$ by the following $3$ step process:
\begin{enumerate}
    \item For each $i \in \what\sigma$, replace the \emph{first} $r$ entries of column $i$ with $0$. 
    \item For each $i \in \sigma$, replace the \emph{last} $k$ entries of column $i$ with $0$. 
    \item Negate all of the entries in the last $k$ rows. 
\end{enumerate}

\begin{definition}\label{def:ccbar}
We will also work with the $r \times |\sigma|$ matrix $C_\sigma(M)$ and the $k \times |\what\sigma|$ matrix $\oline{C}_{\what \sigma}(M)$ which are defined by
\[ C_\sigma(M) := \begin{bmatrix}\vert & \vert &  & \vert \\\mbf c_{\sigma(1)} & \mbf c_{\sigma(2)} & \hdots & \mbf c_{\sigma(|\sigma|)}\\\vspace{.15 cm}
\vert & \vert &  & \vert
\end{bmatrix} \hspace{2cm}\oline{C}_{\what\sigma}(M) := \begin{bmatrix}\vert & \vert &  & \vert \\ \oline{\mbf c}_{\what\sigma(1)} &  \oline{\mbf c}_{\what \sigma(2)} & \hdots &  \oline{\mbf c}_{\what \sigma(|\what \sigma|)}\\\vspace{.15 cm}
\vert & \vert &  & \vert
\end{bmatrix}\]
Recall that we sometimes treat $\sigma$ as a list ordered from smallest to largest, and write $\sigma(i)$ to denote the $i^{th}$ entry in this list.
Note that the fragment matrices will always have $|\sigma| = r$, but the more general definition of $C_\sigma(M)$ and $\oline{C}_{\what\sigma}(M)$ will prove useful in Section~\ref{sec:crossing}.

\end{definition}

\begin{example}\label{ex:first}

Throughout this paper, we will consider the running example with $r = k = 2$ and \[M = \begin{bmatrix} 
3 & 2 & -4 & 1\\
1 & 0 & 2 & 2\\
2 & 0 & -1 & 1\\
0 & 1 & -2 & 3\\
\end{bmatrix}.\]
The set $\binom{[4]}{2}$ contains $6$ elements, so there are $6$ different fragments. For example, when $\sigma = \{1,4\}$, we have 
\[S_{\sigma}(M) =\begin{bmatrix} 
3 & 0 & 0 & 1\\
1 & 0 & 0 & 2\\
0 & 0 & 1 & 0\\
0 & -1 & 2 & 0\\
\end{bmatrix}, \hspace{.3 cm}C_\sigma(M)  = \begin{bmatrix} 
3 & 1\\
1 & 2\\
\end{bmatrix}, \hspace{.3 cm} \text{ and } \hspace{.3 cm} \oline{C}_{\what\sigma}(M) =\begin{bmatrix} 
0 & 1 \\
-1 &  2\\
\end{bmatrix}.\]

\end{example}
\subsection{Signed Tiling Construction}
To form a signed tiling, we parameterize tiles formed by translating the fundamental parallelepiped of fragment matrices by integer combinations of the columns of $M$. 

\begin{definition}\label{def:tile}
Recall Definitions~\ref{def:halfopen} and \ref{def:fragment}.
For any $\mbf z \in \Z^{r+k}$ and $\sigma \in \binom{[r+k]}{r}$, the \emph{tile} parameterized by the pair \((\mbf z, \sigma)\) is defined as
    \[\mathcal T(\mbf z, \sigma) := \parp(S_{\sigma}(M))+M\mbf z.\]
\end{definition}

Note that since $\parp(S_{\sigma}(M))$ depends on $\w$, the tile $\mathcal T(\mbf z,\sigma)$ will depend on $\w$ as well. Nevertheless, the precise choice of $\w$ is not important for our results as long as it remains fixed (and sufficiently generic, see Section~\ref{sec:fixedvalues}). Also, note that we usually think of a tile $\mathcal T(\mbf z, \sigma)$ as a polytope made up of a collection of points, not the points themselves. With this perspective in mind, we introduce the following definition. 

\begin{definition}\label{def:tilesets} 
Consider the sets of tiles
\begin{align*}{\mathbf T}^+(M) :&= \biguplus_{\mbf z \in \Z^{r+k}} \left( \biguplus_{\sigma \in \binom{[r+k]}{r},~\det(S_\sigma(M))>0} \mathcal T(\mbf z, \sigma) \right),\\
\text{and }{\mathbf T}^-(M) :&=  \biguplus_{\mbf z \in \Z^{r+k}}  \left( \biguplus_{ \sigma \in \binom{[r+k]}{r},~\det(S_\sigma(M))<0} \mathcal T(\mbf z, \sigma) \right). \end{align*}
The set ${\mathbf T}^+(M)$ is the set of \emph{positive tiles}, and  ${\mathbf T}^-(M)$ is the set of \emph{negative tiles}. 
We also write ${\mathbf T}(M) := {\mathbf T}^+(M) \biguplus {\mathbf T}^-(M)$. 
Note that we don't include the tiles where \(\det(S_\sigma(M))=0\), but in this case, \(S_\sigma(M)\) is not invertible, and \(\parp(S_\sigma(M))\) is empty.
\end{definition}

\begin{remark}In~\cite{multijections} and~\cite{alexthesis}, the word \emph{tile} is used differently, instead referring to the union of a particular collection of parallelepipeds.

\end{remark}
Definition~\ref{def:tilesets} allows us to cleanly state our main result. 

\begin{theorem}\label{thm:mainthm} The function \(f(\mbf p) : \R^{r+k} \to \Z\), defined by
    \begin{equation}\label{eq:mainthm}f(\mbf p) := \left(\sum_{T \in {\mathbf T}^+(M)}  \supp_{T}(\mbf p)\right) - \left(\sum_{T \in {\mathbf T}^-(M)}  \supp_{T}(\mbf p)\right),\end{equation}
    is constant with value $(-1)^{k}\sgn(\det(M))$. 
\end{theorem}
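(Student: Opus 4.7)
The plan is to follow a two-step strategy: first show that the average value of $f$ over a fundamental period equals $(-1)^k\sgn(\det(M))$, and second show that $f$ is locally constant globally by proving that its value does not change when crossing any facet of any tile. Since $f$ is integer-valued and, by the half-open convention in Definition~\ref{def:halfopen}, well-defined everywhere, these two pieces together force $f$ to be identically equal to that average.

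For the average, I would first observe that the lattice $M\Z^{r+k}$ acts on the tile set $\mathbf T(M) = \mathbf T^+(M) \biguplus \mathbf T^-(M)$ by translation, so $f$ is $M\Z^{r+k}$-periodic. Its average over $\R^{r+k}$ therefore equals the average over $\parp(M)$:
\[
\frac{1}{|\det M|}\int_{\parp(M)} f(\mbf p)\,d\mbf p
= \frac{1}{|\det M|}\sum_{\sigma \in \binom{[r+k]}{r}} \sgn(\det S_\sigma(M))\,\mathrm{vol}(\parp(S_\sigma(M)))
= \frac{1}{|\det M|}\sum_\sigma \det(S_\sigma(M)).
\]
Now by permuting the columns of $S_\sigma(M)$ into the block order $(\sigma,\what\sigma)$, we obtain the block-diagonal matrix $\mathrm{diag}(C_\sigma(M),\oline C_{\what\sigma}(M))$, so $\det(S_\sigma(M)) = \sgn(\sigma,\what\sigma)\det(C_\sigma(M))\det(\oline C_{\what\sigma}(M))$. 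On the other hand, Laplace expansion of $\det(M)$ along the first $r$ rows, together with the $(-1)^k$ coming from the $-\oline{\mbf c}_i$ sign convention in Section~\ref{sec:matrixdecomp}, gives $\det(M) = (-1)^k\sum_\sigma \sgn(\sigma,\what\sigma)\det(C_\sigma)\det(\oline C_{\what\sigma})$. Combining, $\sum_\sigma \det(S_\sigma(M)) = (-1)^k \det(M)$, so the average is $(-1)^k\sgn(\det M)$, exactly the claimed value.

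For the local constancy step, I would partition the facets of all tiles into the collections $\Facetset{~}{\mbf z,\tau}$ of facets sharing a common affine hyperplane (these coincide by the indexing set $\tau \in \binom{[r+k]}{r-1}$ and a lattice shift $\mbf z$). The goal is to show that for each such hyperplane, the signed sum of tiles that ``close'' on the $-\w$ side and ``open'' on the $+\w$ side is zero, using the $\wsgn$ and $\tsgn$ bookkeeping. Concretely, I would pick a generic point $\mbf p$ on the hyperplane and a short $\w$-directed segment through $\mbf p$, and compute $f(\mbf p + \epsilon\w) - f(\mbf p - \epsilon\w)$ for small $\epsilon > 0$. Each facet in the collection either contributes $+1$ or $-1$ to this difference according to the product of the tile's determinant sign and the direction of the half-open boundary; this should reduce, via the kernel vector $\mbf h$ of $\oline C_{\what\tau}(M)$, to a sign-weighted sum that telescopes to zero via an identity resembling a rank-$(r-1)$ Laplace expansion.

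The average computation is essentially routine given the block structure, so the main obstacle is unquestionably the crossing step. The difficulty is that many tiles of different shapes $\sigma$, indexed by enlarging $\tau$ to $\sigma = \tau \cup \{j\}$ for various $j \in \what\tau$, contribute facets lying on the same hyperplane, and the half-open boundary convention interacts nontrivially with the permutation signs, the determinant signs, and the ambient orientation $\w$. Getting the signs to cancel will require a careful local analysis classifying each facet by how it sits relative to $\w$ and $M$, which is why I expect Section~\ref{sec:crossing} to absorb nearly all the work; once that combinatorial identity is in hand, the conclusion of Theorem~\ref{thm:mainthm} follows from combining ``average equals $(-1)^k\sgn(\det M)$'' with ``$f$ is constant.''
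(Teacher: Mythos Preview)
Your proposal is correct and follows essentially the same two-step strategy as the paper: compute the average of $f$ over $\parp(M)$ via Laplace expansion (Section~\ref{sec:average}), then show $f$ is constant by analyzing facet crossings grouped into the hyperplane collections $\Facetset{~}{\mbf z,\tau}$ using the kernel vector $\mbf h$ of $\oline C_{\what\tau}(M)$ (Section~\ref{sec:crossing}). One small omission: the facets of a tile $\mathcal T(\mbf z,\sigma)$ come in two flavors depending on whether the missing index $j$ lies in $\sigma$ or in $\what\sigma$, so in addition to the collections $\Facetset{~}{\mbf z,\tau}$ with $\tau\in\binom{[r+k]}{r-1}$ you also need the dual collections $\FacetsetO{~}{\mbf z,\gamma}$ with $\gamma\in\binom{[r+k]}{r+1}$, handled by the symmetric argument with $\proj$ and $\nproj$ swapped.
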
 

\begin{remark}
    The theorem is also true in the more general setting when \(M\) is not invertible, under the convention that  \(\sgn(0) = 0\).
    For the convenience of talking about tilings, we don't discuss this generalization.
    We invite an interested reader to follow along and see exactly where we require invertibility, and that it isn't required for the proof of Theorem~\ref{thm:mainthm}.
\end{remark}

We provide the proof of Theorem~\ref{thm:mainthm} in Section~\ref{sec:together}. 
To enable the proof, we need two significant lemmas. 
The first lemma is proven in Section~\ref{sec:average}, while the second is proven in Section~\ref{sec:crossing}.
These lemmas are:
\begin{enumerate}
    \item The average of \(f(\mbf p)\) in \eqref{eq:mainthm} over the domain \(\parp(M)\) is $(-1)^{k}\sgn(\det(M))$. See Corollary \ref{cor:avgvalue}.
    \item Moving \(\mbf p\) between tiles doesn't change the value of \(f(\mbf p)\). See Theorem~\ref{thm:1pointconstant}.
\end{enumerate}

\begin{example}\label{ex:posnegtiles}
For the matrix $M$ from Example~\ref{ex:first}, the set ${\mathbf T}(M)$ consists of $6$ families of $4$-dimensional parallelepipeds, where each family contains infinitely many translations of a single fragment. 

By taking the determinant of each fragment, we find that \begin{align*}{\mathbf T}^+(M) &= \biguplus_{\mbf z \in \Z^{r+k}} \left( \biguplus_{\sigma \in \{(1,2),(1,3),(1,4),(2,3),(2,4)\}} \mathcal T(\mbf z, \sigma) \right),\text{ and}\\
{\mathbf T}^-(M) &= \biguplus_{\mbf z \in \Z^{r+k}} \mathcal T(\mbf z, \{3,4\}). \end{align*}

Confirming that Theorem~\ref{thm:mainthm} holds for this example is not a completely straightforward task, even with the help of a computer. Nevertheless, regardless of the choice of $\w$, one can show that each $\mbf p \in\R^4$ is contained in 
\begin{itemize}
    \item one tile in ${\mathbf T}^+(M)$ and no tiles in ${\mathbf T}^-(M)$,
    \item two tiles in ${\mathbf T}^+(M)$ and one tile in ${\mathbf T}^-(M)$, or
    \item three tiles in ${\mathbf T}^+(M)$ and two tiles in ${\mathbf T}^-(M)$.
\end{itemize}
In each case, the value of $f(\mbf p)$ is $1$, which is also the sign of $\det(M)$. A method for visualizing this tiling is described in Example~\ref{ex:projtiles2}.

For an even more concrete example, consider the point $(-2,1,-\frac12,-\frac12)^\top$. One can calculate that this point is on the interior of two positive tiles and one negative tile. In particular, 
\begin{align*}
    \!\left[\begin{array}{X} -2\\1\\ -1/2 \\ -1/2 \end{array}\!\right] &= 
    \!\left[\begin{array}{CCCC} 0&2&-4&0\\0 & 0 & 2 & 0\\-2 & 0 & 0 & -1\\0 & 0 & 0 & -3  \end{array}\!\right]
    \!\left[\begin{array}{X} 5/6\\ 1/2\\ 1/2\\ 5/6 \end{array}\!\right] + 
    \!\left[\begin{array}{CCCC} 3 & 2&-4&1\\1 & 0 & 2 & 2\\2 & 0 & -1 & 1\\0 & 1 & -2 & 3 \end{array}\!\right]
    \!\left[\begin{array}{C} 0\\ -3\\ -1\\ 1 \end{array}\!\right]
    \in \mathcal T\left(\left[\begin{array}{C} 0\\ -3\\ -1\\ 1 \end{array}\right],\{2,3\}\right)\\
    \!\left[\begin{array}{X} -2\\1\\-1/2\\-1/2 \end{array}\!\right] &= 
    \!\left[\begin{array}{CCCC} 0&2&0&1\\0 & 0 & 0 & 2\\-2 & 0 & 1 & 0\\0 & 0 & 2 & 0 \end{array}\!\right]
    \!\left[\begin{array}{X} 5/8\\ 3/4\\ 3/4\\ 1/2 \end{array}\!\right] + 
    \!\left[\begin{array}{CCCC} 3 & 2&-4&1\\1 & 0 & 2 & 2\\2 & 0 & -1 & 1\\0 & 1 & -2 & 3 \end{array}\!\right]
    \!\left[\begin{array}{C} 0\\ -2\\ 0\\ 0 \end{array}\!\right]
    \in \mathcal T\left(\left[\begin{array}{C} 0\\ -2\\ 0\\ 0 \end{array}\right],\{2,4\}\right)\\
    \!\left[\begin{array}{X} -2\\1\\-1/2\\-1/2 \end{array}\!\right] &= 
    \!\left[\begin{array}{CCCC} 0&0&-4&1\\0 & 0 & 2 & 2\\-2 & 0 & 0 & 0\\0 & -1 & 0 & 0 \end{array}\!\right]
    \!\left[\begin{array}{X} 3/4\\ 1/2\\ 7/10\\ 4/5 \end{array}\!\right] + 
    \!\left[\begin{array}{CCCC} 3 & 2&-4&1\\1 & 0 & 2 & 2\\2 & 0 & -1 & 1\\0 & 1 & -2 & 3 \end{array}\!\right]
    \!\left[\begin{array}{C} 0\\ -2\\ -1\\ 0 \end{array}\!\right]
    \in \mathcal T\left(\left[\begin{array}{C} 0\\ -2\\ -1\\ 0 \end{array}\right],\{3,4\}\right)
\end{align*}

Furthermore, this point is also on the boundary of two tiles. Specifically, 
\begin{align*}
    \!\left[\begin{array}{X} -2\\1\\-1/2\\-1/2 \end{array}\!\right] &= 
    \!\left[\begin{array}{CCCC} 0&2&-4&0\\0 & 0 & 2 & 0\\-2 & 0 & 0 & -1\\0 & 0 & 0 & -3 \end{array}\!\right]
    \!\left[\begin{array}{X} 1/6\\ 0\\ 1/2\\ 1/6 \end{array}\!\right] + 
    \!\left[\begin{array}{CCCC} 3 & 2&-4&1\\1 & 0 & 2 & 2\\2 & 0 & -1 & 1\\0 & 1 & -2 & 3 \end{array}\!\right]
    \!\left[\begin{array}{C} 0\\ 0\\ 0\\0  \end{array}\!\right]\\
    \!\left[\begin{array}{X} -2\\1\\-1/2\\-1/2 \end{array}\!\right] &= 
    \!\left[\begin{array}{CCCC} 0&0&-4&1\\0 & 0 & 2 & 2\\-2 & 0 & 0 & 0\\0 & -1 & 0 & 0 \end{array}\!\right]
    \!\left[\begin{array}{X} 1/4\\ 1/2\\ 1/2\\ 0 \end{array}\!\right] + 
    \!\left[\begin{array}{CCCC} 3 & 2&-4&1\\1 & 0 & 2 & 2\\2 & 0 & -1 & 1\\0 & 1 & -2 & 3 \end{array}\!\right]
    \!\left[\begin{array}{C} 0\\ 0\\ 0\\ 0 \end{array}\!\right]
    \end{align*}

By Lemma~\ref{lem:halfopen}, it follows that \[(-2,1,-\frac12,-\frac12)^\top \in \mathcal T(\mbf 0 , \{2,3\}) \text{ if and only if }(S_{\{2,3\}}(M)^{-1}\mbf w)_2 >0.\] 
Similarly, \[(-2,1,-\frac12,-\frac12)^\top \in \mathcal T(\mbf 0 , \{3,4\})\text{ if and only if }(S_{\{3,4\}}(M)^{-1}\mbf w)_4 >0.\] 
After another calculation, one finds that for $\mbf w = (w_1,w_2,w_3,w_4)^\top$, we have 
\[(S_{\{2,3\}}(M)^{-1}\mbf w)_2 = \frac12 w_1 + w_2 \text{ and }(S_{\{3,4\}}(M)^{-1}\mbf w)_4 = \frac 15 w_1 + \frac 25 w_2.\] 
These entries are both positive if $w_1>2w_2$ and both negative if $w_1 < 2w_2$. 
Note that we cannot have $w_1 = 2w_2$, or else $\mbf w$ would not be sufficiently generic. 
In particular, \(\w'\) would be in the span of \(\mbf b_4\).

In conclusion, there are two possibilities. If $w_1 > 2w_2$, then $(-2,1,-\frac12,-\frac12)^\top$ is in three positive tiles and two negative tiles. 
Alternatively, if $w_1 < 2w_2$, then $(-2,1,-\frac12,-\frac12)^\top$ is in two positive tiles and one negative tile. 
In either case, this point is in exactly one more positive tile than negative tile, and thus satisfies Theorem~\ref{thm:mainthm}.
\end{example}

When one of  ${\mathbf T}^+(M)$ or ${\mathbf T}^-(M)$ is empty, Theorem~\ref{thm:mainthm} specializes to a result about more traditional tilings.  
We state only the version where ${\mathbf T}^-(M)$ is empty, but the same statement holds if ``non-negative'' is replaced with ``non-positive''. 

\begin{corollary}\label{cor:traditionaltiling}
{\cite[Corollary~9.2.8]{alexthesis}}
    If the sign of $\det(S_\sigma(M))$ is non-negative for each $\sigma \in \binom{[r+k]}{r}$, then
    \[\R^{r+k} = \bigsqcup_{\mbf z \in \mbb Z} \left( \bigsqcup_{\sigma \in \binom{[r+k]}{r}} \mathcal T(\mbf z, \sigma) \right).\] 
\end{corollary}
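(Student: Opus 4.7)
The plan is to deduce the corollary as an almost immediate consequence of Theorem~\ref{thm:mainthm} combined with Lemma~\ref{lem:support}, once we verify that the constant value of $f$ comes out to $+1$.

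First I would observe that the hypothesis eliminates all the ``negative'' data. By Definition~\ref{def:tilesets}, $\det(S_\sigma(M)) \geq 0$ for every $\sigma$ means $\mathbf{T}^-(M) = \emptyset$, so the function $f$ of Theorem~\ref{thm:mainthm} becomes simply $f(\mbf p) = \sum_{T \in \mathbf{T}^+(M)} \supp_T(\mbf p)$. Moreover, for any $\sigma$ with $\det(S_\sigma(M)) = 0$ the fragment matrix is non-invertible, so $\parp(S_\sigma(M)) = \emptyset$ by Lemma~\ref{lem:halfopen}; hence every translate $\mathcal T(\mbf z,\sigma)$ for such $\sigma$ is empty and contributes nothing to the union. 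Consequently the right-hand side of the corollary equals $\bigsqcup_{T \in \mathbf{T}^+(M)} T$ as a subset of $\R^{r+k}$.

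Second, I would identify the constant value produced by Theorem~\ref{thm:mainthm} as $+1$. The block structure of $S_\sigma(M)$ (columns indexed by $\sigma$ are supported in the top $r$ rows, columns indexed by $\what\sigma$ in the bottom $k$ rows) gives
\[
\det(S_\sigma(M)) = \sgn(\sigma,\what\sigma)\,\det(C_\sigma(M))\,\det(\oline C_{\what\sigma}(M)).
\]
Expanding $\det(M)$ by Laplace along the first $r$ rows, and using that the bottom $k$ rows of $M$ contribute columns $-\oline{\mbf c}_i$ (pulling out an overall $(-1)^k$), yields the identity
\[
\det(M) = (-1)^k \sum_{\sigma \in \binom{[r+k]}{r}} \det(S_\sigma(M)).
\]
Under the hypothesis, every summand is non-negative; since $M$ is invertible the sum is strictly positive, so $\sgn(\det(M)) = (-1)^k$ and therefore $(-1)^k \sgn(\det(M)) = 1$.

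Combining these two steps, Theorem~\ref{thm:mainthm} yields $\sum_{T \in \mathbf{T}^+(M)} \supp_T(\mbf p) = 1$ for every $\mbf p \in \R^{r+k}$. Applying Lemma~\ref{lem:support} in the ``$\Leftarrow$'' direction converts this pointwise count into the disjoint-union statement $\R^{r+k} = \bigsqcup_{T \in \mathbf{T}^+(M)} T$, which together with the first observation is exactly the claim of Corollary~\ref{cor:traditionaltiling}. There is no substantial obstacle here: the only real work beyond invoking Theorem~\ref{thm:mainthm} is the sign bookkeeping needed to see $(-1)^k\sgn(\det(M)) = 1$, and this is a short Laplace-expansion calculation already foreshadowed by the two-dimensional computation in the introduction.
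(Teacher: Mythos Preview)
Your proposal is correct and follows essentially the same route as the paper: invoke Theorem~\ref{thm:mainthm} and then apply Lemma~\ref{lem:support}. The only extra work you do is the Laplace-expansion calculation to pin down $(-1)^k\sgn(\det(M))=1$; this is fine (it is exactly Lemma~\ref{lem:laplace}), though a shorter route is to note that once $\mathbf T^-(M)=\emptyset$ the function $f$ is everywhere a sum of indicators and hence nonnegative, so the constant value $\pm1$ from Theorem~\ref{thm:mainthm} is forced to be $+1$.
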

\begin{proof}
    This follows immediately after applying Lemma~\ref{lem:support} to Theorem~\ref{thm:mainthm}.
\end{proof}

\begin{remark}
    The conditions required on $M$ for Corollary~\ref{cor:traditionaltiling} to apply are discussed in~\cite[Section~6.7]{alexthesis}. 
    The original proof of the corollary relies on these properties, so we needed different methods to prove the more general Theorem~\ref{thm:mainthm}. 
    A special case of Corollary~\ref{cor:traditionaltiling} was used in~\cite{multijections} to define a family of \emph{multijections} between the \emph{sandpile group} and \emph{cellular spanning forests} for a large class of cell complexes. 
\end{remark}

\section{Average Weight of the Tiling}\label{sec:average}

In this short section, we show by determinant computations that the average value of $f(\mbf p)$ is $(-1)^{k}\sgn(\det(M))$. 
To state this idea precisely, we use integrals and ideas from introductory calculus. 

\begin{lemma}\label{lem:SandC}
For any $\sigma \in \binom{[r+k]}{r}$, the following equality holds:
\[\det(S_\sigma(M)) = \det({C}_{\sigma}(M)) \cdot \det(\oline{C}_{\what \sigma}(M)) \cdot \sgn(\sigma,\what\sigma).\]
\end{lemma}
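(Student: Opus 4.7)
The plan is a short direct computation. I would begin by observing that the fragment matrix $S_\sigma(M)$ has a block structure hidden by the ordering of its columns: the columns indexed by $i \in \sigma$ are exactly the $\mbf b_i$, which are supported in the top $r$ rows, while the columns indexed by $i \in \hat\sigma$ are the $\oline{\mbf b}_i$, which are supported in the bottom $k$ rows. Thus if we permute the columns of $S_\sigma(M)$ so that the indices in $\sigma$ (in increasing order) come first and those in $\hat\sigma$ (in increasing order) come afterward, we obtain the block-diagonal matrix
\[
\begin{bmatrix} C_\sigma(M) & 0 \\ 0 & \oline{C}_{\hat\sigma}(M) \end{bmatrix},
\]
whose determinant is the product $\det(C_\sigma(M))\cdot \det(\oline{C}_{\hat\sigma}(M))$.

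Next, I would identify the sign of the column permutation used. By construction, this permutation rearranges the column indices $1,2,\dots,r+k$ into the list $(\sigma(1),\dots,\sigma(r),\hat\sigma(1),\dots,\hat\sigma(k))$; its sign is precisely $\sgn(\sigma,\hat\sigma)$ by the definition of the sign of a permutation given in Section~2. Since permuting columns of a matrix multiplies the determinant by the sign of that permutation, we get
\[
\det(S_\sigma(M))\cdot \sgn(\sigma,\hat\sigma) = \det(C_\sigma(M))\cdot \det(\oline{C}_{\hat\sigma}(M)).
\]
Multiplying both sides by $\sgn(\sigma,\hat\sigma)$, which squares to $1$, yields the stated identity.

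There is no real obstacle here; the only minor subtlety is keeping track of the fact that the negation in the last $k$ rows required by the definition of $S_\sigma(M)$ has already been absorbed into the definition of $\oline{\mbf c}_i$ (and hence $\oline{C}_{\hat\sigma}(M)$), so no extra sign appears. Once that bookkeeping is verified, the argument is essentially a one-line Laplace-expansion-style factorization together with a permutation sign.
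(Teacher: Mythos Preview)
Your proof is correct and is exactly the standard argument: permute the columns of $S_\sigma(M)$ into block-diagonal form, pick up the sign $\sgn(\sigma,\what\sigma)$ from that permutation, and read off the product of the two block determinants. The paper itself states this lemma without proof, treating it as immediate from the block structure of $S_\sigma(M)$, so your write-up is precisely the intended (omitted) justification.
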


The next lemma is a direct application of the Laplace determinant expansion formula. 
Note that the $(-1)^{k}$ term is included because the bottom $k$ rows of $M$ are given by $-\oline C_{[r+k]}(M)$ instead of $\oline C_{[r+k]}(M)$. 

\begin{lemma}[Multiple Row Laplace Expansion]\label{lem:laplace}
Using the notation above, we have the following chain of equalities. 
\[(-1)^{k} \det(M) = \sum_{\sigma \in \binom{[r+k]}{r}} \det(S_\sigma(M)) = \sum_{\sigma \in \binom{[r+k]}{r}} \det(C_\sigma(M))\det(\oline{C}_{\what \sigma}(M))\sgn(\sigma,\what \sigma).\] 
\end{lemma}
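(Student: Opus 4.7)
The plan is to derive both equalities directly from multilinearity of the determinant together with the column decomposition \(M_i = \mbf b_i + (-\oline{\mbf b}_i)\). The second equality is immediate: it is just Lemma~\ref{lem:SandC} summed over all \(\sigma \in \binom{[r+k]}{r}\), so nothing new is needed there. The real content is the first equality \((-1)^k \det(M) = \sum_{\sigma} \det(S_\sigma(M))\).

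First I would write the determinant of \(M\) with its columns split as \(M_i = \mbf b_i - \oline{\mbf b}_i\) and expand by multilinearity in every column. This produces a sum over all subsets \(\sigma \subseteq [r+k]\) (where \(\sigma\) records the columns for which we picked the \(\mbf b_i\) piece rather than the \(-\oline{\mbf b}_i\) piece):
\begin{equation*}
\det(M) \;=\; \sum_{\sigma \subseteq [r+k]} (-1)^{|\what\sigma|}\, \det(N_\sigma),
\end{equation*}
where \(N_\sigma\) has column \(\mbf b_i\) for \(i \in \sigma\) and \(\oline{\mbf b}_i\) for \(i \notin \sigma\). Comparing with Definition~\ref{def:fragment} gives \(N_\sigma = S_\sigma(M)\) exactly.

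Next I would show that only the subsets of size \(r\) contribute. If \(|\sigma| > r\), then \(S_\sigma(M)\) has more than \(r\) columns whose only nonzero entries lie in the first \(r\) rows, so those columns span a subspace of dimension at most \(r\) and are linearly dependent; hence \(\det(S_\sigma(M)) = 0\). Symmetrically, if \(|\sigma| < r\) then \(|\what\sigma| > k\) and the columns indexed by \(\what\sigma\) force another linear dependence. Thus the only surviving terms are those with \(|\sigma| = r\), and for each such term \(|\what\sigma| = k\), so every surviving sign equals \((-1)^k\). This yields
\begin{equation*}
\det(M) \;=\; (-1)^k \sum_{\sigma \in \binom{[r+k]}{r}} \det(S_\sigma(M)),
\end{equation*}
which, after multiplying by \((-1)^k\), is the first equality in the lemma.

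There is no real obstacle here; the only thing to be careful about is the bookkeeping of signs, in particular the distinction between \(\oline{\mbf b}_i\) and \(-\oline{\mbf b}_i\) that is built into the definition of \(S_\sigma(M)\) (which absorbs the minus signs on the bottom \(k\) rows). Once that is tracked correctly, the multilinearity expansion together with the vanishing argument for \(|\sigma|\neq r\) finishes the first equality, and Lemma~\ref{lem:SandC} finishes the second.
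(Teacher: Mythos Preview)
Your argument is correct. The paper does not actually supply a proof of this lemma; it simply states that the result is a direct application of the multi-row Laplace expansion (with the \((-1)^k\) accounting for the negation of the bottom \(k\) rows). Your multilinearity-in-columns expansion, together with the observation that terms with \(|\sigma|\neq r\) vanish, is exactly the standard derivation of that expansion in this setting, and your use of Lemma~\ref{lem:SandC} for the second equality matches what the paper intends.
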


Now, we are ready for the main result of the section. 
\begin{theorem}\label{thm:integralf} Let $f$ be the function defined in Theorem~\ref{thm:mainthm}. Then, 
     \[\int_{\parp(M)} f(\mbf x) \mathrm d\mbf x = (-1)^{k} \det(M) .\]
\end{theorem}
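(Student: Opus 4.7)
The plan is to expand the integral using the definition of $f$, swap the integral and the sums (justified because for any fixed compact $\parp(M)$ only finitely many translates $\mathcal T(\mbf z,\sigma)$ meet it), and then recognize each remaining sum as a single parallelepiped volume. Concretely, I would write
\begin{equation*}
\int_{\parp(M)} f(\mbf x)\,\mathrm d\mbf x \;=\; \sum_{\sigma\in\binom{[r+k]}{r}} \sgn(\det(S_\sigma(M))) \sum_{\mbf z\in\Z^{r+k}} \mathrm{vol}\bigl(\parp(M)\cap \mathcal T(\mbf z,\sigma)\bigr),
\end{equation*}
where the $\sigma$ with $\det(S_\sigma(M))=0$ contribute nothing since $\parp(S_\sigma(M))=\emptyset$ in that case.

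Next I would evaluate the inner sum for each fixed $\sigma$. Translating by $-M\mbf z$ gives
\begin{equation*}
\mathrm{vol}\bigl(\parp(M)\cap \mathcal T(\mbf z,\sigma)\bigr) \;=\; \mathrm{vol}\bigl((\parp(M)-M\mbf z)\cap \parp(S_\sigma(M))\bigr),
\end{equation*}
and Lemma~\ref{lem:basictiling} says the translates $\parp(M)-M\mbf z$ tile $\R^{r+k}$ disjointly. Since half-open boundary choices have Lebesgue measure zero, summing over $\mbf z\in\Z^{r+k}$ recovers the full volume of $\parp(S_\sigma(M))$, namely $|\det(S_\sigma(M))|$. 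Multiplying by the sign and summing, the integral becomes $\sum_\sigma \det(S_\sigma(M))$, which by the multiple-row Laplace expansion (Lemma~\ref{lem:laplace}) equals $(-1)^k\det(M)$.

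The only things to check carefully are the swap of the countable sum and the integral, and the use of Lemma~\ref{lem:basictiling} after the translation. The swap is routine because $\parp(M)$ is bounded, so at most finitely many $\mbf z$ yield a nonempty intersection $\parp(M)\cap \mathcal T(\mbf z,\sigma)$ for each fixed $\sigma$, and there are only finitely many $\sigma$. The tiling identity is used purely at the level of Lebesgue volumes, so the $\w$-dependent boundary conventions in Definition~\ref{def:halfopen} have no effect. Thus no truly hard step arises here; the content of the lemma is really the algebraic identity $\sum_\sigma \det(S_\sigma(M)) = (-1)^k\det(M)$, supplied by the Laplace expansion, dressed up geometrically via the tiling by translates of $\parp(M)$.
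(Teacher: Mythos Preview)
Your argument is correct and follows essentially the same approach as the paper: swap the sum and integral, use the tiling of $\R^{r+k}$ by translates of $\parp(M)$ (Lemma~\ref{lem:basictiling}) to collapse the inner sum to $|\det(S_\sigma(M))|$, and then invoke the Laplace expansion (Lemma~\ref{lem:laplace}). The only cosmetic difference is that the paper phrases the key step as ``all the translates of $\mathcal T(\mbf z,\sigma)$ intersected with $\parp(M)$ reassemble to one copy of $\parp(S_\sigma(M))$'', whereas you translate by $-M\mbf z$ and use the tiling on the other factor; these are the same computation.
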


\begin{proof}
First, note that for any $\mbf p \in \R^{r+k}$, there is precisely one $\mbf z \in \Z^{r+k}$ such that $\mbf p - M\mbf z \in \parp(M)  $. 
This is a direct consequence of Lemma~\ref{lem:basictiling}. 
Referring to Definition~\ref{def:tile}, for a point \(\mbf p \in \parp (S_{\sigma}(M)) \), there is exactly one choice of  $\mbf z \in \Z^{r+k}$ so that the intersection of \(\mathcal T(\mbf z,\sigma)\) and \(\parp(M)\) contains \(\mbf p\).
Therefore,  all the intersections of the various \(\mathcal T(\mbf z,\sigma)\) with \(\parp(M)\) contains equivalent points to a single copy, \(\mathcal T(\mbf 0,\sigma)\).
This gives us the following equality: 
\begin{equation*}\sum_{\mbf z \in \Z^{r+k}} \int_{\parp(M)}\supp_{\mathcal T(\mbf z,\sigma)}(\mbf x) \mathrm d \mbf x 
= \int_{\R^{r+k}}\supp_{\mathcal T(\mbf 0,\sigma)}(\mbf x) \mathrm d\mbf x 
\end{equation*}

Directly applying definitions gives
\begin{equation*}
\int_{\R^{r+k}}\supp_{\mathcal T(\mbf 0,\sigma)}(\mbf x) \mathrm d\mbf x 
= \int_{\R^{r+k}}\supp_{S_\sigma(M)}(\mbf x) \mathrm d\mbf x  
= |\det(S_\sigma(M))|.
\end{equation*}

Combining the definition of $f$, a carefully considered change to the order of summation and integration, the definitions of $\mathbf T^+(M)$ and $\mathbf T^-(M)$, and the above equalities, it follows that:

\begin{align*}\int_{\parp(M)} f(\mbf x) \mathrm d\mbf x &= \int_{\parp(M)} \sum_{T \in \mathbf T^+(M)} \supp_{T}(\mbf x) \mathrm d\mbf x - \int_{\parp(M)} \sum_{T \in \mathbf T^-(M)} \supp_{T}(\mbf x) \mathrm d\mbf x\\ &= \sum_{T \in \mathbf T^+(M)}\int_{\parp(M)}  \supp_{T}(\mbf x) \mathrm d\mbf x - \sum_{T \in \mathbf T^-(M)}\int_{\parp(M)}  \supp_{T}(\mbf x) \mathrm d\mbf x \\ 
&= \sum_{\sigma \in \binom{[r+k]}{r},~\det(S_\sigma(M))>0,~\mbf z \in \Z^{r+k}}\int_{\parp(M)}  \supp_{\mathcal T(\mbf z,\sigma)}(\mbf x) \mathrm d\mbf x - \\
&\hspace{3 cm}\sum_{\sigma \in \binom{[r+k]}{r},~\det(S_\sigma(M))<0,~\mbf z \in \Z^{r+k}}\int_{\parp(M)}  \supp_{\mathcal T(\mbf z,\sigma)}(\mbf x) \mathrm d\mbf x\\
&= \sum_{\sigma \in \binom{[r+k]}{r},~\det(S_\sigma(M))>0}|\det(S_\sigma(M))| - \sum_{\sigma \in \binom{[r+k]}{r},~\det(S_\sigma(M))<0}|\det(S_\sigma(M))|\\
&= \sum_{\sigma \in \binom{[r+k]}{r}}\det(S_\sigma(M)).\end{align*}
The result now follows directly from Lemma~\ref{lem:laplace}. 
\end{proof}

From here, we can directly compute the average value of $f$ over the domain $\parp(M)$.
\begin{corollary}\label{cor:avgvalue}
    Let $f$ be the function defined in Theorem~\ref{thm:mainthm}. Then, 
    \[\left(\int_{\parp(M)} f(\mbf x) \mathrm d\mbf x\right)/\left(\int_{\parp(M)} \supp_{\parp(M)} \mathrm d\mbf x\right) = (-1)^k \sgn(\det(M)).\]
\end{corollary}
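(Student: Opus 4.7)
The plan is to recognize that Corollary~\ref{cor:avgvalue} is essentially a one-line consequence of Theorem~\ref{thm:integralf} combined with the fact that $\parp(M)$ has volume $|\det(M)|$. So the bulk of the work has already been done, and what remains is to correctly identify numerator and denominator and apply the definition of $\sgn$.

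First I would handle the numerator. By Theorem~\ref{thm:integralf} we have $\int_{\parp(M)} f(\mbf x)\,\mathrm d\mbf x = (-1)^{k}\det(M)$, and since $M$ is invertible this is a nonzero real number.

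Next I would handle the denominator. The integral $\int_{\parp(M)} \supp_{\parp(M)}\,\mathrm d\mbf x$ is by definition the Lebesgue measure (volume) of $\parp(M)$. Since $\parp(M)$ is the half-open parallelepiped spanned by the columns of the invertible matrix $M$ (Definition~\ref{def:halfopen}), and since the half-open version differs from the closed parallelepiped only on a measure-zero boundary set, its volume is $|\det(M)|$. This is nonzero, so the ratio in the statement is well-defined.

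Finally, I would combine the two and simplify. Dividing gives
\[
\frac{\int_{\parp(M)} f(\mbf x)\,\mathrm d\mbf x}{\int_{\parp(M)} \supp_{\parp(M)}\,\mathrm d\mbf x} \;=\; \frac{(-1)^{k}\det(M)}{|\det(M)|} \;=\; (-1)^{k}\sgn(\det(M)),
\]
using the definition $\sgn(x) = x/|x|$ for $x \neq 0$, which yields the claim. There is no real obstacle here; the only thing to be careful about is making sure one uses the volume of the half-open parallelepiped (which coincides with that of the closed one) and that invertibility of $M$ rules out the degenerate case where the denominator would vanish.
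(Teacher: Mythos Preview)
Your proof is correct and matches the paper's approach exactly: the paper treats this corollary as an immediate consequence of Theorem~\ref{thm:integralf} together with the fact that the volume of $\parp(M)$ is $|\det(M)|$, which is precisely what you do.
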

It is immediate from the definition of $\mathbf T(M)$ that $f(\mbf p) = f(\mbf p + M\mbf z)$ for any $\mbf z \in \Z^{r+k}$. 
In particular, Corollary~\ref{cor:avgvalue} also holds over any domain that is a union of translates of \(\parp(M)\) by integer linear combinations of the column of $M$. 
Since these translates cover $\R^{r+k}$ by Lemma~\ref{lem:basictiling}, we say colloquially that Corollary~\ref{cor:avgvalue} implies that $(-1)^k \sgn(\det(M))$ is the ``average value'' of $f$ over $\R^{r+k}$. 
In particular, once we show in Section~\ref{sec:together} that $f$ is constant, this will imply that its value is $(-1)^k \sgn(\det(M))$.

\section{Crossing Boundaries of Tiles}\label{sec:crossing}

The goal of this section is to prove the value of the function $f$ from Theorem~\ref{thm:mainthm} does not change when leaving one tile and entering another. 
In Section~\ref{sec:together}, we will use this idea to show that $f$ is constant over all of $\R^{r+k}$.
Most of this section is working towards a sub-goal, which is to pair up collections of facets that have opposite signs in a sense.
This pairing is illustrated in Figure~\ref{fig:projF}.

Recall that $f$ expresses the sum of indicator functions of a collection of half-open parallelepipeds. 
Each of these indicator functions is constant on all of $\R^{r+k}$, except on the boundary of the associated tile. 
Thus, in order to prove that $f$ is constant, it is useful to consider these boundary points, which lie in the facets of the tile.

\subsection{Initial Definitions and Lemmas} 
First, we give names for the \emph{facets} of a half-open parallelepiped $\Pi(N)$ (i.e., the maximal faces of the closure of $\Pi(N)$). 
We only give a definition for the facets of parallelepipeds under consideration in this work (namely elements of $\mbf T(M)$), but the definition is easily generalized.

\begin{definition}\label{def:facets}
Fix $\mbf z \in \Z^{r+k}$ and $\sigma \in \binom{[r+k]}{r}$. There are \(2(r+k)\) facets of the tile \( \mathcal T(\mbf z, \sigma) \), which come in pairs.
We first define the ``lower'' facet of each pair, then the ``upper'' facet based upon the ``lower''.
For $j \in [r+k]$, let
\begin{align*}
    \mathcal F( \mbf z, \sigma, j, 0)  :&= \sum_{i\in [r+k]\setminus \{j\}} \left\{ x_i S_\sigma(M)_i : \begin{array}{l}0 < x_i \leq 1 \text{ if $(S_\sigma(M)^{-1}\w)_i < 0$}\\0 \leq x_i<  1  \text{ if $(S_\sigma(M)^{-1}\w)_i > 0$}\end{array}\right\}+ M\mbf z,\\
    \mathcal F( \mbf z, \sigma, j, 1) :&=  \mathcal F( \mbf z, \sigma, j, 0) + S_\sigma(M)_j.
\end{align*} 
We conflate these two definitions into a single parametrization, \(\mathcal F( \mbf z, \sigma, j, s) \). 
Characterizing the parameters, \(\mbf z \in \Z^{r+k}\) is a translational parameter, \(\sigma \in \binom{[r+k]}{r}\) determines the fragment, \(j \in [r+k]\) indexes which facet of  \( \mathcal T(\mbf z, \sigma) \) is considered and indicates which vector of \(S_\sigma(M)\) is excluded, and \(s \in \{0,1\}\) is the choice between ``upper'' and ``lower'' facet.
When $S_\sigma(M)$ is not invertible, let $ \mathcal F( \mbf z, \sigma, j, s) := \emptyset$.
\end{definition}

While definitionally, \(\mathcal F( \mbf z, \sigma, j, s) \) is a subset of \(\mathbb{R}^{r+k}\), we conflate this notion with another, where \(\mathcal F( \mbf z, \sigma, j, s) \) is a single discrete object, the combinatorial facet of some parallelepiped.
We switch between these perspectives, depending on the context.

In practice, we find that it was more useful to work with a slight variant of $\mathcal F( \mbf z, \sigma, j, s)$, which we define below. 

\begin{definition}\label{def:tildefacet} 
Fix $\sigma \in \binom{[r+k]}{r}$. 
For $j \in [r+k]$ and \(s \in \{0,1\}\), let

\[ \mathcal{\widetilde F} ( \mbf z, \sigma, j, s) := \begin{cases} \mathcal F( \mbf z- s \mbf{e}_j, \sigma, j, s) & \text{ if $j \in \sigma$,}\\
\mathcal F( \mbf z + s \mbf{e}_j, \sigma, j, s) & \text{ otherwise,}\end{cases} 
\]
where \(\mbf{e}_j\) is the \(j^{th}\) standard basis vector. Note that $\mathcal{\widetilde F} ( \mbf z, \sigma, j, 0) = \mathcal{F} ( \mbf z, \sigma, j, 0)$ for any choice of $\mbf z$, $\sigma$, and $j$. 
\end{definition}

Definition~\ref{def:tildefacet} is a bit less natural than Definition~\ref{def:facets} since $\mathcal F( \mbf z, \sigma, j, s)$ is always a facet of $\mathcal T(\mbf z,\sigma)$, while  $\mathcal{\widetilde F} ( \mbf z, \sigma, j, s)$ is a facet of $\mathcal T(\mbf z,\sigma)$ if $s = 0$ and a facet of $\mathcal T(\mbf z \pm \mbf{e}_j,\sigma)$ if $s=1$. 
However, this groups the facets into more convenient collections.  
We will show in Section~\ref{sec:groupingfacets} the reasons we made this change, and the convenience that arises.

Recall that each fragment matrix $S_\sigma(M)$ is essentially a block matrix, where the first \(r\) coordinates of each vector are either those from \(M\) or \(0\), and the last \(k\) coordinates are either the negatives of \(M\) or \(0\). 
Because of this, it will be useful for us to consider the following projection maps. 

\begin{definition}\label{def:proj}
We write $\proj$ for the map from $\R^{r+k} \to \R^r$ which projects into the first $r$ coordinates and $\nproj$ for the map from $\R^{r+k} \to \R^k$ which projects to the last $k$ coordinates. In particular, for any $\mbf p \in \R^{r+k}$, we have
\[\mbf p = \begin{bmatrix} \proj(\mbf p) \\ \nproj(\mbf p) \end{bmatrix}.\] 
Recall that we define $\w' := \proj(\w)$ and $\w'':= \nproj(\w)$.
\end{definition}

The following lemma is immediate from the structure of $M$ and $S_\sigma(M)$. 
\begin{lemma}\label{lem:MSproj}
    Let $\sigma \in \binom{[r+k]}{r}$ and $j \in [r+k]$. Then $\proj(M_j) = {\mbf c}_j$, $\nproj(M_j) = -\oline{\mbf c}_j$, 
    \[ \proj(S_\sigma(M)_j) = \begin{cases} {\mbf c}_j &\text{if $j \in \sigma$,}\\ \mbf 0 & \text{otherwise,}\end{cases} \hspace{ .5 cm} \text{and} \hspace{ .5 cm}\nproj(S_\sigma(M)_j) = \begin{cases} \mbf 0 &\text{if $j \in \sigma$,}\\ \oline{\mbf c}_j & \text{otherwise.}\end{cases}\]    
\end{lemma}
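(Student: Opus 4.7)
The plan is to unfold the definitions and read off each equality directly; there is essentially no content beyond bookkeeping, as the paper itself acknowledges when it calls the lemma ``immediate.'' I would structure the proof as four short verifications, one per claimed equality, appealing only to Section~\ref{sec:matrixdecomp} and Definition~\ref{def:fragment}.

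First I would recall that, by construction in Section~\ref{sec:matrixdecomp}, the column $M_j$ is built by stacking $\mbf c_j$ (length $r$) on top of $-\oline{\mbf c}_j$ (length $k$). Applying Definition~\ref{def:proj} immediately gives $\proj(M_j)=\mbf c_j$ and $\nproj(M_j)=-\oline{\mbf c}_j$.

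Next I would split on whether $j\in\sigma$. If $j\in\sigma$, then Definition~\ref{def:fragment} gives $S_\sigma(M)_j=\mbf b_j$, which by definition has top block $\mbf c_j$ and bottom block $\mbf 0$; projecting yields $\proj(S_\sigma(M)_j)=\mbf c_j$ and $\nproj(S_\sigma(M)_j)=\mbf 0$. If instead $j\notin\sigma$, then $S_\sigma(M)_j=\oline{\mbf b}_j$, which has top block $\mbf 0$ and bottom block $\oline{\mbf c}_j$ (since $-\oline{\mbf b}_j$ is defined to have last $k$ entries equal to $-\oline{\mbf c}_j$); projecting yields $\proj(S_\sigma(M)_j)=\mbf 0$ and $\nproj(S_\sigma(M)_j)=\oline{\mbf c}_j$.

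There is no real obstacle here: all four identities follow by reading off the top $r$ and bottom $k$ coordinates of a vector that was literally defined by specifying those two blocks. The only subtlety worth flagging in writing is the sign convention, namely that the bottom block of $M_j$ is $-\oline{\mbf c}_j$ while the bottom block of $\oline{\mbf b}_j$ (when $j\notin\sigma$) is $+\oline{\mbf c}_j$; this is exactly the reason the negation of the last $k$ rows appears in step~(3) of the process following Definition~\ref{def:fragment}, and it is what makes the lemma's two $\oline{\mbf c}_j$ outputs agree in sign despite coming from different sources.
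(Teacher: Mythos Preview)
Your proposal is correct and matches the paper's approach: the paper gives no proof at all, simply stating that the lemma ``is immediate from the structure of $M$ and $S_\sigma(M)$,'' and your argument is exactly the unfolding of definitions that justifies this claim. Your remark on the sign convention is apt and clarifies the one point where a careless reader might stumble.
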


The following two lemmas allow us to describe any facet $\widetilde{\mathcal F}(\mbf z, \sigma, j ,s)$ in terms of the columns of $C_\sigma(M)$ and $\oline{C}_{\what\sigma}(M)$.

\begin{lemma}\label{lem:proj} 
Let $\sigma \in \binom{[r+k]}{r}$ and $j \in [r+k]$ such that $S_\sigma(M)$ is invertible. We have the following equalities, 
\[\proj( \widetilde{\mathcal F}( \mbf z, \sigma, j, 0) ) = \sum_{i\in \sigma\setminus j} \left\{ x_i \mbf c_i: \begin{array}{l}0 < x_i \leq 1 \text{ if $(C_\sigma(M)^{-1}\w')_{\sigma^{-1}(i)} < 0$}\\0 \leq x_i<  1  \text{ if $(C_\sigma(M)^{-1}\w')_{\sigma^{-1}(i)} > 0$}\end{array} \right\} + \proj(M\mbf z) .\]

\[\nproj( \widetilde{\mathcal F}( \mbf z, \sigma, j, 0) ) = \sum_{i\in \what\sigma\setminus j} \left\{ x_i \oline{\mbf c}_{i}: \begin{array}{l}0 < x_i \leq 1 \text{ if $(\oline{C}_{\what\sigma}(M)^{-1}\w'')_{\what\sigma^{-1}(i)} < 0$}\\0 \leq x_i<  1  \text{ if $(\oline{C}_{\what\sigma}(M)^{-1}\w'')_{\what\sigma^{-1}(i)} > 0$}\end{array} \right\}+ \nproj(M\mbf z).\]
\end{lemma}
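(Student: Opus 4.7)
The plan is to pull the defining formula for $\mathcal F(\mbf z, \sigma, j, 0)$ through the linear maps $\proj$ and $\nproj$, using the block structure of $S_\sigma(M)$. Since $s = 0$, Definition~\ref{def:tildefacet} gives $\widetilde{\mathcal F}(\mbf z, \sigma, j, 0) = \mathcal F(\mbf z, \sigma, j, 0)$, so I can work directly with the explicit Minkowski sum description in Definition~\ref{def:facets}.

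First I would apply $\proj$ term by term to the Minkowski sum, pulling out the translation $\proj(M\mbf z)$ by linearity. By Lemma~\ref{lem:MSproj}, $\proj(S_\sigma(M)_i) = \mbf c_i$ when $i \in \sigma$ and $\proj(S_\sigma(M)_i) = \mbf 0$ when $i \in \what\sigma$. Consequently, every factor with $i \in \what\sigma \setminus \{j\}$ collapses to $\{\mbf 0\}$ and can be dropped, leaving a Minkowski sum indexed by $i \in \sigma \setminus \{j\}$ of sets of the form $\{x_i \mbf c_i : \dots\}$, which matches the shape of the right-hand side.

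The key step is matching the sign conditions. Here I would exploit the block structure: after reordering columns so that those indexed by $\sigma$ come first, $S_\sigma(M)$ is block diagonal with upper-left block $C_\sigma(M)$ and lower-right block $\oline C_{\what\sigma}(M)$. Since $S_\sigma(M)$ is invertible, Lemma~\ref{lem:SandC} forces both blocks to be invertible, so $S_\sigma(M)^{-1}$ is block diagonal with blocks $C_\sigma(M)^{-1}$ and $\oline C_{\what\sigma}(M)^{-1}$. Writing $\w = \wsplit$ and tracking indices through the column permutation, this yields $(S_\sigma(M)^{-1}\w)_i = (C_\sigma(M)^{-1}\w')_{\sigma^{-1}(i)}$ for $i \in \sigma$, which is precisely the substitution needed to convert the sign conditions from Definition~\ref{def:facets} into the ones stated in the lemma.

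The $\nproj$ computation is entirely symmetric: the terms with $i \in \sigma \setminus \{j\}$ now vanish, the surviving sum is indexed by $i \in \what\sigma \setminus \{j\}$ with columns $\oline{\mbf c}_i$, and the analogous block identity gives $(S_\sigma(M)^{-1}\w)_i = (\oline C_{\what\sigma}(M)^{-1}\w'')_{\what\sigma^{-1}(i)}$ for $i \in \what\sigma$. The only real obstacle is bookkeeping: correctly identifying the $i$th coordinate of $S_\sigma(M)^{-1}\w$ with the $\sigma^{-1}(i)$th coordinate of $C_\sigma(M)^{-1}\w'$ (and the analogous statement for $\what\sigma$) after the reordering. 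I do not expect this to require any new ideas beyond careful index tracking.
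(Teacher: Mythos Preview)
Your proposal is correct and follows essentially the same approach as the paper: reduce to $\mathcal F(\mbf z,\sigma,j,0)$ via Definition~\ref{def:tildefacet}, project the Minkowski sum term by term using Lemma~\ref{lem:MSproj}, and use the block structure of $S_\sigma(M)$ to identify $(S_\sigma(M)^{-1}\w)_i$ with the appropriate entry of $C_\sigma(M)^{-1}\w'$ or $\oline C_{\what\sigma}(M)^{-1}\w''$. The paper's proof is slightly terser but makes exactly these three observations.
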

\begin{proof}
    First, note that $\widetilde{\mathcal F}( \mbf z, \sigma, j, 0) = {\mathcal F}( \mbf z, \sigma, j, 0)$, so we can focus on Definition~\ref{def:facets}. We claim that for any $i\in [r+k]$, the following equalities hold:
    \begin{align*}(S_{\sigma}(M)^{-1}\mbf w)_i &= \begin{cases} (C_\sigma(M)^{-1}\mbf w')_{\sigma^{-1}(i)} & \text{ if $i \in \sigma$, and}\\ (\oline{C}_{\what\sigma}(M)^{-1}\mbf w'')_{\what\sigma^{-1}(i)} & \text{ if $i \in \what\sigma$.}\end{cases}\\
    \proj(x_iS_\sigma(M)_i) &= \begin{cases} x_i \mbf c_i & \text{ if $i \in \sigma$, and}\\ \mbf 0 & \text{ if $i \in \what\sigma$.}\end{cases}\\
    \nproj(x_iS_\sigma(M)_i) &= \begin{cases} \mbf 0 & \text{ if $i \in \sigma$, and}\\  x_i \oline{\mbf c}_i  & \text{ if $i \in \what\sigma$.}\end{cases}\end{align*}
    All three parts of this claim follow from the block structure of $S_\sigma(M)$, and the bottom two equalities are also simple corollaries of Lemma~\ref{lem:MSproj}. 
    From here, the result follows from Definition~\ref{def:facets}. 
\end{proof}

\begin{lemma}\label{lem:proj2} 
Let $\sigma \in \binom{[r+k]}{r}$ and $j \in [r+k]$.

\[ \proj(\mathcal{\widetilde F} ( \mbf z, \sigma, j, 1)) = \begin{cases} \proj(\mathcal {\widetilde F}( \mbf z, \sigma, j, 0)) & \text{if $j \in \sigma$,} \\ \proj(\mathcal {\widetilde F}( \mbf z, \sigma, j, 0)) + \mbf c_j & \text{otherwise.}\end{cases}\]

\[\nproj(\mathcal{\widetilde F} ( \mbf z, \sigma, j, 1) ) = \begin{cases} \nproj(\mathcal {\widetilde F}( \mbf z, \sigma, j, 0)) + \oline{\mbf c}_j& \text{if $j \in \sigma$,} \\ 
\nproj(\mathcal {\widetilde F}( \mbf z, \sigma, j, 0))& \text{otherwise.}\end{cases}\] 
\end{lemma}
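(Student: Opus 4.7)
The plan is to unfold both definitions and reduce each side of each claimed equality to an additive shift of $\widetilde{\mathcal F}(\mbf z, \sigma, j, 0)$, then apply the projections $\proj$ and $\nproj$ using Lemma~\ref{lem:MSproj}. Since $\widetilde{\mathcal F}(\mbf z, \sigma, j, 0) = \mathcal F(\mbf z, \sigma, j, 0)$ in all cases, I only need to track what happens at $s=1$.

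First I would unpack Definition~\ref{def:tildefacet} in the case $j \in \sigma$ to get
\[
\widetilde{\mathcal F}(\mbf z, \sigma, j, 1) = \mathcal F(\mbf z - \mbf e_j, \sigma, j, 1) = \mathcal F(\mbf z - \mbf e_j, \sigma, j, 0) + S_\sigma(M)_j = \mathcal F(\mbf z, \sigma, j, 0) - M_j + S_\sigma(M)_j,
\]
where I used Definition~\ref{def:facets} (which tells us $\mathcal F(\mbf z', \sigma, j, 1) = \mathcal F(\mbf z', \sigma, j, 0) + S_\sigma(M)_j$ and $\mathcal F(\mbf z - \mbf e_j, \sigma, j, 0) = \mathcal F(\mbf z, \sigma, j, 0) - M_j$, the latter being immediate from the $M\mbf z$ translate in Definition~\ref{def:facets}). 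Analogously, for $j \notin \sigma$ I would derive
\[
\widetilde{\mathcal F}(\mbf z, \sigma, j, 1) = \mathcal F(\mbf z, \sigma, j, 0) + M_j + S_\sigma(M)_j.
\]
So in both cases the set $\widetilde{\mathcal F}(\mbf z, \sigma, j, 1)$ is a translate of $\widetilde{\mathcal F}(\mbf z, \sigma, j, 0)$ by an explicit vector.

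Next, I would apply the projections using Lemma~\ref{lem:MSproj}: $\proj(M_j) = \mbf c_j$, $\nproj(M_j) = -\oline{\mbf c}_j$, and $\proj(S_\sigma(M)_j) = \mbf c_j$, $\nproj(S_\sigma(M)_j) = \mbf 0$ when $j \in \sigma$, while $\proj(S_\sigma(M)_j) = \mbf 0$ and $\nproj(S_\sigma(M)_j) = \oline{\mbf c}_j$ when $j \notin \sigma$. Plugging these in:
\begin{itemize}
\item If $j \in \sigma$, the translation vector is $-M_j + S_\sigma(M)_j$, whose $\proj$ is $-\mbf c_j + \mbf c_j = \mbf 0$ and whose $\nproj$ is $\oline{\mbf c}_j + \mbf 0 = \oline{\mbf c}_j$.
\item If $j \notin \sigma$, the translation vector is $M_j + S_\sigma(M)_j$, whose $\proj$ is $\mbf c_j + \mbf 0 = \mbf c_j$ and whose $\nproj$ is $-\oline{\mbf c}_j + \oline{\mbf c}_j = \mbf 0$.
\end{itemize}
These four case evaluations match the four cases in the statement exactly.

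The main obstacle is essentially bookkeeping: one must be careful about the sign convention in the bottom block (recall $M$ is defined so that $\nproj(M_j) = -\oline{\mbf c}_j$, whereas $\nproj(S_\sigma(M)_j) = +\oline{\mbf c}_j$ for $j \notin \sigma$), because it is precisely this sign flip that makes the relevant components of $M_j \pm S_\sigma(M)_j$ cancel in one block while doubling in the other. Once the sign convention from Section~\ref{sec:matrixdecomp} is handled correctly, the proof is a short chain of substitutions with no genuine difficulty.
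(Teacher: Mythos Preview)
Your proposal is correct and follows essentially the same approach as the paper: unfold Definitions~\ref{def:facets} and~\ref{def:tildefacet} to express $\widetilde{\mathcal F}(\mbf z,\sigma,j,1)$ as $\widetilde{\mathcal F}(\mbf z,\sigma,j,0)$ shifted by $\pm M_j + S_\sigma(M)_j$, then evaluate the projections of that shift via Lemma~\ref{lem:MSproj}. The paper carries out the $j\in\sigma$ case in detail and declares the other case analogous; you spell out both, but the argument is otherwise identical.
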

\begin{proof}
    First, suppose that $j \in \sigma$. Then, by combining Definitions~\ref{def:facets} and \ref{def:tildefacet}, and applying Lemma~\ref{lem:MSproj}, we find that 
    \begin{align*} \proj(\mathcal{\widetilde F} ( \mbf z, \sigma, j, 1)) 
    &= \proj(\mathcal F( \mbf z-  \mbf{e}_j, \sigma, j, 0) + S_\sigma(M)_j)\\ 
    &= \proj(\mathcal {\widetilde F} ( \mbf z, \sigma, j, 0)- M\mbf e_j + S_\sigma(M)_j )\\ 
    &= \proj(\mathcal {\widetilde F} ( \mbf z, \sigma, j, 0))- \proj(M_j) + \proj(S_\sigma(M)_j) \\
    &= \proj(\mathcal {\widetilde F} ( \mbf z, \sigma, j, 0))- \mbf c_j + \mbf c_j \\ &= \proj(\mathcal {\widetilde F} ( \mbf z, \sigma, j, 0)).\end{align*}

    Similarly, 
    \begin{align*} \nproj(\mathcal{\widetilde F} ( \mbf z, \sigma, j, 1)) 
    &= \nproj(\mathcal {\widetilde F} ( \mbf z, \sigma, j, 0))- \nproj(M_j) + \nproj(S_\sigma(M)_j) \\
    &= \nproj(\mathcal {\widetilde F} ( \mbf z, \sigma, j, 0))- (-\oline{\mbf c}_j) + \mbf 0 \\ 
    &= \nproj(\mathcal {\widetilde F} ( \mbf z, \sigma, j, 0)) + \oline{\mbf c}_j.\end{align*}
    
    The case where $j \not\in \sigma$ is analogous. 
\end{proof}

\subsection{Finding Structure by Grouping Facets} \label{sec:groupingfacets}

We were not able to prove Theorem~\ref{thm:mainthm} by considering individual facets. 
Nevertheless, when collections of facets are grouped in a particular way, a useful structure becomes apparent. 
In particular, one can use Lemmas~\ref{lem:proj} and~\ref{lem:proj2} to show that each collection defined below is made up of facets contained within the same hyperplane. 

\begin{definition}\label{def:facetcollections}
    Fix $\mbf z \in \Z^{r+k}$ and $\tau \in \binom{[r+k]}{r-1}$. We define the collection of facets
\[\Facetset{~}{\mbf z, \tau} := \biguplus_{j \in \what \tau} 
\left( \mathcal{\widetilde F} ( \mbf z, \tau \cup j, j, 0)  
\uplus  \mathcal{\widetilde F} ( \mbf z, \tau \cup j, j, 1) \right) .\]
Similarly, fix $\mbf z \in \Z^{r+k}$ and $\gamma \in \binom{[r+k]}{r+1}$. We define the collection of facets
\[\FacetsetO{~}{\mbf z, \gamma} := \biguplus_{j \in \gamma} 
\left( \mathcal{\widetilde F}  ( \mbf z, \gamma \setminus j, j, 0)  
\uplus  \mathcal{\widetilde F} ( \mbf z, \gamma \setminus j, j, 1) \right).\]
\end{definition}

The following proposition and corollary show that the collections of \(\Facetset{~}{\mbf z, \tau}\) and \(\FacetsetO{~}{\mbf z, \gamma}\) partition the set of facets.

\begin{prop}\label{prop:uniquetile} 
Let \(\mathcal F( \mbf z, \sigma, j, s) \) be a facet of the tile \(\mathcal T(\mbf z, \sigma) \in \mathbf T(M)\).

If \(j \in \sigma\), then  \[ \mathcal{F} ( \mbf z, \sigma, j, s) = \mathcal{\widetilde F}( \mbf z + s \mbf {e}_j, \sigma, j, s) \in  \Facetset{~}{ \mbf z + s \mbf {e}_j, \sigma \setminus j} .\]

If \(j \not \in \sigma\), then  \[ \mathcal{F} ( \mbf z, \sigma, j, s) =\mathcal{\widetilde F}( \mbf z - s \mbf {e}_j, \sigma, j, s) \in \FacetsetO{~}{\mbf z - s \mbf {e}_j, \sigma \cup j}.\]

\end{prop}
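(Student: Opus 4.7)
The proposition has two parallel claims (cases $j\in\sigma$ and $j\notin\sigma$), and each claim splits into (i) an equality between $\mathcal F$ and $\mathcal{\widetilde F}$, and (ii) a containment in a collection from Definition~\ref{def:facetcollections}. The plan is to handle each of these four pieces as a direct unpacking of definitions; there is no genuine obstacle here, but it is worth laying out carefully because the $\pm s \mbf e_j$ shifts are easy to mis-sign.

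For the equalities, I would start from Definition~\ref{def:tildefacet}, which encodes exactly the rule ``shift the $\mbf z$-argument by $-s\mbf e_j$ when $j\in\sigma$, and by $+s\mbf e_j$ otherwise''. So in the case $j\in\sigma$, substituting $\mbf z+s\mbf e_j$ into the definition yields
\[\mathcal{\widetilde F}(\mbf z+s\mbf e_j,\sigma,j,s)=\mathcal F((\mbf z+s\mbf e_j)-s\mbf e_j,\sigma,j,s)=\mathcal F(\mbf z,\sigma,j,s),\]
and in the case $j\notin\sigma$, substituting $\mbf z-s\mbf e_j$ gives the analogous identity with the signs reversed. This is mechanical but exhausts the equality assertions.

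For the containments, I would then match the appropriate collection in Definition~\ref{def:facetcollections}. In the case $j\in\sigma$, set $\tau:=\sigma\setminus j$; this lies in $\binom{[r+k]}{r-1}$, we have $j\in\what\tau$, and $\tau\cup j=\sigma$. Hence the summand of $\Facetset{~}{\mbf z+s\mbf e_j,\tau}$ indexed by $j$ and by $s\in\{0,1\}$ is exactly $\mathcal{\widetilde F}(\mbf z+s\mbf e_j,\sigma,j,s)$, establishing the containment. In the case $j\notin\sigma$, set $\gamma:=\sigma\cup j\in\binom{[r+k]}{r+1}$; then $j\in\gamma$ and $\gamma\setminus j=\sigma$, and the summand of $\FacetsetO{~}{\mbf z-s\mbf e_j,\gamma}$ indexed by $j$ and by $s$ is exactly $\mathcal{\widetilde F}(\mbf z-s\mbf e_j,\sigma,j,s)$.

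The only subtlety, and the one place I would be careful, is keeping the roles of $j\in\sigma$ versus $j\notin\sigma$ aligned with the ``$\what\tau$'' versus ``$\gamma$'' parametrizations in Definition~\ref{def:facetcollections}: the $\Facetset{~}{\mbf z,\tau}$ collection is indexed by indices $j$ not in $\tau$ (so $j\in\tau\cup j$), and the $\FacetsetO{~}{\mbf z,\gamma}$ collection is indexed by $j\in\gamma$ (so $j\notin\gamma\setminus j$). Matching these conventions to the present $\sigma$ is what makes the two cases dual. With that bookkeeping in place the proof is complete, and no use of Lemmas~\ref{lem:proj} or~\ref{lem:proj2} is needed — those will come into play only for the subsequent geometric claims that these collections lie in common hyperplanes.
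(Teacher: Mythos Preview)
Your proof is correct and takes exactly the same approach as the paper: the equality is read off from Definition~\ref{def:tildefacet} by substituting $\mbf z \pm s\mbf e_j$, and the containment is read off from Definition~\ref{def:facetcollections} by setting $\tau=\sigma\setminus j$ or $\gamma=\sigma\cup j$. The paper's own proof is a one-line reference to these two definitions, so your version is simply a more detailed expansion of the same argument.
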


\begin{proof}
The first equality is immediate from Definition~\ref{def:tildefacet}, while the inclusion follows from Definition~\ref{def:facetcollections}.
\end{proof}

\begin{corollary}\label{cor:allFacets} Given the definitions above, we find the following equality: 
\label{eq:facetgroups}\[ \biguplus_{\tau \in \binom{[r+k]}{r-1},~ \mbf z \in \mathbb Z^{r+k}} \Facetset{~}{\mbf z,\tau}
\hspace{10pt} \mathlarger{\biguplus} \hspace{10pt}
\biguplus_{\gamma \in \binom{[r+k]}{r+1},~ \mbf z \in \mathbb Z^{r+k}} \FacetsetO{~}{\mbf z,\gamma} =\] 
\[ \biguplus_{\mbf z \in \mathbb Z^{r+k},~\sigma \in \binom{[r+k]}{r},~j \in [r+k],~s \in \{0,1\}}\mathcal{F} ( \mbf z, \sigma, j, s).\]
\end{corollary}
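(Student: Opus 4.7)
The plan is to unpack both sides and exhibit an explicit bijection on the parameter tuples indexing the $\biguplus$'s. Since $\biguplus$ denotes disjoint union of labeled objects (see Section~\ref{sec:distunion}), the equality reduces to a bijection between index sets such that the associated facets coincide.

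First, I would expand the left-hand side using Definition~\ref{def:facetcollections} to rewrite it as
\[\biguplus_{\substack{\tau \in \binom{[r+k]}{r-1},\, \mbf z \in \Z^{r+k}\\ j \in \what\tau,\, s \in \{0,1\}}}\!\!\widetilde{\mathcal F}(\mbf z, \tau \cup j, j, s) \;\;\biguplus\;\; \biguplus_{\substack{\gamma \in \binom{[r+k]}{r+1},\, \mbf z \in \Z^{r+k}\\ j \in \gamma,\, s \in \{0,1\}}}\!\!\widetilde{\mathcal F}(\mbf z, \gamma \setminus j, j, s).\]
Applying Definition~\ref{def:tildefacet}, I rewrite each $\widetilde{\mathcal F}$ in terms of $\mathcal F$: in the first union $j \in \tau\cup j$, so $\widetilde{\mathcal F}(\mbf z, \tau \cup j, j, s) = \mathcal F(\mbf z - s\mbf e_j, \tau \cup j, j, s)$, while in the second $j \notin \gamma\setminus j$, so $\widetilde{\mathcal F}(\mbf z, \gamma\setminus j, j, s) = \mathcal F(\mbf z + s\mbf e_j, \gamma\setminus j, j, s)$.

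Next, I would define explicit reparametrizations. For the first union, set $\sigma := \tau \cup j$ and $\mbf z' := \mbf z - s\mbf e_j$; this sends tuples $(\mbf z, \tau, j, s)$ with $j \notin \tau$ bijectively onto tuples $(\mbf z', \sigma, j, s)$ with $j \in \sigma$ (inverse: $\tau = \sigma\setminus j$, $\mbf z = \mbf z' + s\mbf e_j$), and the corresponding summand becomes exactly $\mathcal F(\mbf z', \sigma, j, s)$. For the second union, set $\sigma := \gamma \setminus j$ and $\mbf z' := \mbf z + s\mbf e_j$; this sends tuples $(\mbf z, \gamma, j, s)$ with $j \in \gamma$ bijectively onto tuples $(\mbf z', \sigma, j, s)$ with $j \notin \sigma$, and again the summand becomes $\mathcal F(\mbf z', \sigma, j, s)$. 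The two images partition the full index set on the right-hand side according to the case split $j \in \sigma$ versus $j \notin \sigma$, so the reparametrized families together reproduce the right-hand side term by term.

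There is essentially no real obstacle here beyond careful bookkeeping; indeed, Proposition~\ref{prop:uniquetile} already handles the reverse direction of the correspondence by locating every $\mathcal F(\mbf z, \sigma, j, s)$ into exactly one of the two families on the left. The only subtlety worth emphasizing is that $\biguplus$ treats the facets as labeled by their parameter tuples, so I do not need to argue geometric distinctness of facets as subsets of $\R^{r+k}$; verifying the combinatorial bijection of index tuples suffices.
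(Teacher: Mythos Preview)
Your proposal is correct and follows essentially the same approach as the paper: expand the collections via Definition~\ref{def:facetcollections}, convert each $\widetilde{\mathcal F}$ to an $\mathcal F$ using Definition~\ref{def:tildefacet}, and then use the bijections $(\tau,j)\leftrightarrow(\sigma,j\in\sigma)$, $(\gamma,j)\leftrightarrow(\sigma,j\notin\sigma)$ together with the shift $\mbf z\mapsto \mbf z\pm s\mbf e_j$ on $\Z^{r+k}$. The paper separates the $s=0$ and $s=1$ terms and then invokes the shift bijection after taking the union over $\mbf z$, while you absorb the shift directly into the reparametrization $\mbf z'=\mbf z\mp s\mbf e_j$; this is only a cosmetic difference.
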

\begin{proof}
First note that for any fixed $\mbf z \in \Z^{r+k}$, we get the following chain of equalities. 

\begin{align*}\biguplus_{\tau \in \binom{[r+k]}{r-1}} \Facetset{~}{\mbf z,\tau} &= \biguplus_{\tau \in \binom{[r+k]}{r-1},~j \in \what \tau} 
\left( \mathcal{\widetilde F} ( \mbf z, \tau \cup j, j, 0)  
\uplus  \mathcal{\widetilde F} ( \mbf z, \tau \cup j, j, 1) \right)\\
&= \biguplus_{\sigma \in \binom{[r+k]}{r},~j \in \sigma} 
\left( \mathcal{\widetilde F} ( \mbf z, \sigma, j, 0)  
\uplus  \mathcal{\widetilde F} ( \mbf z, \sigma, j, 1) \right)\\
&= \biguplus_{\sigma \in \binom{[r+k]}{r},~j \in \sigma} 
\left( \mathcal{F} ( \mbf z, \sigma, j, 0)  
\uplus  \mathcal{F} ( \mbf z- {\mbf e}_j, \sigma, j, 1) \right).\\\end{align*}

Since $\mbf z \to \mbf z + {\mbf e}_j$ gives a bijection on $\mathbb Z^{r+k}$ for every $j \in [r+k]$, taking the union over every $\mbf z \in \Z^{r+k}$ gives the following equality: 

\begin{align*}\biguplus_{\tau \in \binom{[r+k]}{r-1},~ \mbf z \in \mathbb Z^{r+k}} \Facetset{~}{\mbf z,\tau} = \biguplus_{\sigma \in \binom{[r+k]}{r},~j \in \sigma,~ \mbf z \in \mathbb Z^{r+k}} 
\left( \mathcal{F} ( \mbf z, \sigma, j, 0)  
\uplus  \mathcal{F} ( \mbf z, \sigma, j, 1) \right).\end{align*} 
An analogous calculation shows that:

\begin{align*}\biguplus_{\gamma \in \binom{[r+k]}{r+1},~ \mbf z \in \mathbb Z^{r+k}} \FacetsetO{~}{\mbf z,\gamma} = \biguplus_{\sigma \in \binom{[r+k]}{r},~j \in \what\sigma,~ \mbf z \in \mathbb Z^{r+k}} 
\left( \mathcal{F} ( \mbf z, \sigma, j, 0)  
\uplus  \mathcal{F} ( \mbf z, \sigma, j, 1) \right).\end{align*} 
The result follows from combining these two equalities. 
\end{proof}

Corollary~\ref{cor:allFacets} enables us to focus on sets of the form \(\Facetset{~}{\mbf z,\tau}\) and \(\FacetsetO{~}{\mbf z,\gamma}\) , while still retaining the full character of the problem.

One important property of these sets is the following result.

\begin{prop}\label{prop:sameproj}
    Fix any $\tau \in \binom{[r+k]}{r-1}$ and $\mbf z \in \Z^{r+k}$. The relative interior of $\proj(F)$ is the same for all $F \in \Facetset{~}{\mbf z, \tau}$. 
    In particular, this region is given by the open $(r-1)$-dimensional parallelepiped 
    \[ \sum_{i \in \tau} \left \{ x_i\mbf c_i : 0 < x_i < 1\right\} + \proj(M\mbf z).\]
    Similarly, fix any $\gamma \in \binom{[r+k]}{r+1}$ and $\mbf z \in \Z^{r+k}$. The relative interior of $\nproj(F)$ is the same for all $F \in \FacetsetO{~}{\mbf z, \gamma}$. In particular, this region is given by the open $(k-1)$-dimensional parallelepiped 
    \[ \sum_{i \in \what\gamma} \left \{ x_i\mbf c_i : 0 < x_i < 1\right\} + \nproj(M\mbf z).\]
\end{prop}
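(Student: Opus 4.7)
The plan is to unfold the definitions and apply Lemmas~\ref{lem:proj} and~\ref{lem:proj2} directly. By Definition~\ref{def:facetcollections}, every $F \in \Facetset{~}{\mbf z,\tau}$ has the form $\widetilde{\mathcal F}(\mbf z, \tau \cup j, j, s)$ for some $j \in \what\tau$ and $s \in \{0,1\}$ (with the facet understood to be empty when $S_{\tau \cup j}(M)$ is singular). I will show that for every such pair $(j,s)$ producing a nonempty facet, the relative interior of $\proj(F)$ is the declared open parallelepiped.

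First I would dispose of the case $s=0$. Setting $\sigma := \tau \cup j$, Lemma~\ref{lem:proj} expresses $\proj(\widetilde{\mathcal F}(\mbf z,\sigma,j,0))$ as a Minkowski sum indexed by $i \in \sigma \setminus j = \tau$, an index set that does not involve $j$. The half-open constraints on each $x_i$ (whether $(0,1]$ or $[0,1)$) do depend on the signs $(C_\sigma(M)^{-1}\w')_{\sigma^{-1}(i)}$, and hence on $j$, but these are precisely the boundary data that are discarded when passing to the relative interior, leaving $0 < x_i < 1$ uniformly for all $i \in \tau$. This gives the claimed open parallelepiped, which is visibly independent of both $j$ and the sign data. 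For $s=1$, Lemma~\ref{lem:proj2} shows $\proj(\widetilde{\mathcal F}(\mbf z,\sigma,j,1)) = \proj(\widetilde{\mathcal F}(\mbf z,\sigma,j,0))$ because $j \in \sigma$, so this case reduces to the one already settled.

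The argument for $\FacetsetO{~}{\mbf z,\gamma}$ is entirely symmetric. Writing $F = \widetilde{\mathcal F}(\mbf z,\gamma \setminus j, j, s)$ and setting $\sigma := \gamma \setminus j$, we now have $j \notin \sigma$ and $\what\sigma \setminus j = \what\gamma$. The $\nproj$ formula of Lemma~\ref{lem:proj} then gives a Minkowski sum indexed by $\what\gamma$, independent of $j$; the relative interior again washes out the $j$-dependent sign conditions; and Lemma~\ref{lem:proj2} disposes of $s=1$ since $j \notin \sigma$ forces $\nproj(\widetilde{\mathcal F}(\mbf z,\sigma,j,1)) = \nproj(\widetilde{\mathcal F}(\mbf z,\sigma,j,0))$.

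The main point to notice, rather than a genuine obstacle, is that the $j$-dependent half-open boundary conditions delivered by Lemma~\ref{lem:proj} become irrelevant upon taking the relative interior, so that the various facets in $\Facetset{~}{\mbf z,\tau}$ (respectively $\FacetsetO{~}{\mbf z,\gamma}$) project onto regions that share a common open interior even though their boundaries may differ. Modulo that observation, the proof is pure bookkeeping with the index identities $\sigma \setminus j = \tau$ and $\what\sigma \setminus j = \what\gamma$.
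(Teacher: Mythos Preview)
Your proposal is correct and follows essentially the same approach as the paper: apply Lemmas~\ref{lem:proj} and~\ref{lem:proj2} and observe that passing to the relative interior erases the $j$-dependent half-open boundary data. The paper's proof is simply a terser version of what you wrote, noting only that ``the expression in Lemma~\ref{lem:proj} is simpler when we are only concerned with the relative interior of $\proj(F)$.''
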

\begin{proof}
    This follows from directly from Lemmas~\ref{lem:proj} and~\ref{lem:proj2}. Notice that the expression in Lemma~\ref{lem:proj} is simpler when we are only concerned with the relative interior of $\proj(F)$. 
\end{proof}

For the remainder of this section, we will focus primarily on the $\Facetset{~}{\mbf z, \tau}$ setting, but analogous statements about $\FacetsetO{~}{\mbf z, \gamma}$ hold as well.

\subsection{``Up'' Facets and ``Down'' Facets}\label{sec:updown}\hspace{1pt}

For this and the following subsection, we will look closer at the facets which make up $\Facetset{~}{\mbf z, \tau}$ for a specific choice of $\mbf z \in \Z^{r+k}$ and $\tau \in \binom{[r+k]}{r-1}$. We begin by defining two functions, $\wsgn$ and $\tsgn$, which both map from $\Facetset{~}{\mbf z,\tau} \to \pm 1$.

First, we define $\wsgn$, which keeps track of whether $F$ is ``below'' or ``above'' its associated tile in the \(\mbf w\) direction.

\begin{definition}\label{def:wsgn}
Suppose that $F = \mathcal {\widetilde F} ( \mbf z, \tau \cup j, j, s) \in \Facetset{~}{\mbf z,\tau}$ and choose $\mbf p$ to be an arbitrary point on the interior of $F$. 
Let $\mbf q = \mbf p- M(\mbf z - s\mbf e_j)$, which is a point on the boundary of $\parp(S_{\tau \cup j}(M))$. 
Then,
\[ \wsgn(F) := \begin{cases} 1 & \text{if $\mbf q + \epsilon \w \in \parp(S_{\tau \cup j}(M))$ for all sufficiently small $\epsilon > 0$,}\\ -1 &  \text{otherwise.}\end{cases}\]

\end{definition}

Next, we define $\tsgn$, which keeps track of whether $F$ is a facet of a ``positive tile'' or a ``negative tile''. 

\begin{definition}\label{def:tsgn}Suppose that $F = \mathcal {\widetilde F} ( \mbf z, \tau \cup j, j, s) \in \Facetset{~}{\mbf z,\tau}$. Then, 
\[\tsgn(F) := \begin{cases}
1  & \text{if $\det(S_{\tau \cup j}(M)) > 0$,}\\ 
-1 & \text{otherwise.} 
    \end{cases}\]
\end{definition} 

Recall that $F = \mathcal {\widetilde F} ( \mbf z, \tau \cup j, j, s)$ is a facet of the parallelepiped $T$, where  \[T = \mathcal T(\mbf z+s\mbf{e}_j,\tau\cup j) = \parp(S_{\tau \cup j}(M)) + M(\mbf z + s\mbf{e}_j).\] It follows from Definition~\ref{def:wsgn}, that a particle crossing $F$ in the direction $\w$ will enter $T$ if $\wsgn(F) = 1$ and will exit $T$ if $\wsgn(F) = -1$. 

Furthermore, recall that $T \in \mathbf T^+(M)$ if $\det(S_{\tau\cup j})(M) > 0$ and $T \in \mathbf T^-(M)$ if $\det(S_{\tau\cup j}(M)) < 0$. It follows from Definition~\ref{def:tsgn} that $T \in \mathbf T^+(M)$ if $\tsgn(F) = 1$ and $T \in \mathbf T^-(M)$ if $\tsgn(F) = -1$. 

Next, consider the product $\wsgn(F)\tsgn(F)$. If $\wsgn(F)\tsgn(F) = 1$, then a particle crossing $F$ if the direction $\w$ either \emph{enters} a \emph{positive} tile or \emph{exits} a \emph{negative} tile. Alternatively, if $\wsgn(F)\tsgn(F) = -1$, then the particle either \emph{enters} a \emph{negative} tile or \emph{exits} a \emph{positive} tile. 

Note that even through we only defined $\wsgn(F)$ and $\tsgn(F)$ for $F \in \Facetset{~}{\mbf z,\tau}$, analogous definitions hold for $F \in \FacetsetO{~}{\mbf z,\gamma}$. See the end of this section for a bit more discussion about this generalization.

Recall the function $f$ defined in Theorem~\ref{thm:mainthm}. The product $\wsgn(F)\tsgn(F)$ indicates the change in contribution to $f$ coming from $T$ when a particle crosses $F$ in the $\w$ direction. More precisely, we have the following result. 

\begin{lemma}\label{lem:contrib}
    Fix $\mbf p \in \R^{r+k}$ and $\epsilon>0$ such that the line segment between $\mbf p$ and $\mbf p + \epsilon\w$  intersected with the set of all facets of tiles in $\mbf T(M)$ forms a single point, $\mbf q$, which lies on the interior of this line segment. 
    Then,
    \[ f(\mbf p + \epsilon\w) - f(\mbf p) = \sum_{F \ni \mbf q} \wsgn(F)\tsgn(F),\]
    where $F \ni \mbf q$ indicates the set of all facets of tiles in $\mbf T(M)$ which contain the point $\mbf q$. 
\end{lemma}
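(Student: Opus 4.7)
The plan is to decompose $f(\mbf p + \epsilon\w) - f(\mbf p)$ tile by tile and show that each tile's contribution can be read off from the one facet meeting the segment at $\mbf q$. Using the definition of $f$ and exchanging the finite difference with the (pointwise) sums defining $f$, one has
\[
f(\mbf p + \epsilon\w) - f(\mbf p) \;=\; \sum_{T \in \mbf T(M)} \epsilon_T \bigl( \supp_T(\mbf p + \epsilon\w) - \supp_T(\mbf p) \bigr),
\]
where $\epsilon_T = +1$ when $T \in \mbf T^+(M)$ and $\epsilon_T = -1$ when $T \in \mbf T^-(M)$. Only finitely many $T$ can contribute, since the segment is bounded and each tile has bounded support.

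Next I would analyze a single tile $T$. By hypothesis, the closed segment meets the union of all facets of tiles in $\mbf T(M)$ only at $\mbf q$, so the indicator $\supp_T$ is locally constant along $[\mbf p, \mbf q)$ and along $(\mbf q, \mbf p + \epsilon\w]$. Consequently $\supp_T(\mbf p + \epsilon\w) - \supp_T(\mbf p)$ is nonzero only when some facet of $T$ contains $\mbf q$. The sufficient genericity of $\w$ (Section~\ref{sec:fixedvalues}) ensures the segment does not meet a codimension-two face of any single tile, so at most one facet $F$ of $T$ can contain $\mbf q$; this is where the genericity of $\w$ is used essentially. Fix such an $F$ if it exists.

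Now I would apply Lemma~\ref{lem:halfopen} together with Definition~\ref{def:wsgn} to compare $\supp_T(\mbf p)$ and $\supp_T(\mbf p + \epsilon\w)$. If $\wsgn(F) = +1$, the half-open parallelepiped $\parp(S_{\tau \cup j}(M))$ extends infinitesimally in the $+\w$ direction from $F$, so after translation, $\mbf p + \epsilon\w \in T$ while $\mbf p \notin T$, and the indicator difference is $+1$. If $\wsgn(F) = -1$, the reverse inclusion holds and the difference is $-1$. In either case the difference equals $\wsgn(F)$.

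By Definition~\ref{def:tsgn} we have $\epsilon_T = \tsgn(F)$, so each tile $T$ whose facet $F$ contains $\mbf q$ contributes exactly $\wsgn(F)\,\tsgn(F)$ to the total, yielding the claimed identity. The main obstacle is the careful half-open bookkeeping: one must be sure that crossing $F$ transversally in the $\w$ direction really transforms ``$\mbf p \notin T$'' into ``$\mbf p + \epsilon\w \in T$'' with the correct sign dictated by $\wsgn$, and that the genericity of $\w$ forbids pathological cases where $\mbf q$ lies on several facets of a single tile simultaneously. Both points follow directly from Lemma~\ref{lem:halfopen} and the genericity assumption on $\w$, so the proof should be compact.
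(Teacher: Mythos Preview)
Your tile-by-tile decomposition and the identification of each nonzero summand with $\wsgn(F)\tsgn(F)$ match exactly the informal justification the paper gives in the paragraphs preceding the lemma (the paper supplies no formal proof; it treats the statement as immediate from the discussion of $\wsgn$ and $\tsgn$).

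There is one genuine gap in your argument. The claim that ``the sufficient genericity of $\w$ ensures the segment does not meet a codimension-two face of any single tile'' is false. Genericity of $\w$ means only that $\w$ lies outside certain linear subspaces determined by columns of the matrices in play; this guarantees the segment is transversal to each facet hyperplane, but it says nothing about \emph{where} the line $\mbf p + t\w$ sits in space. Whether that line meets a given $(r+k-2)$-dimensional face depends on $\mbf p$, which carries no genericity assumption. Concretely, take $r=k=1$, $M=\left(\begin{smallmatrix}1&0\\0&-1\end{smallmatrix}\right)$, $\w=(1,1)^\top$, and let the segment pass through the origin: then $\mbf q=\mbf 0$ lies on both the bottom and left facets of the tile $\mathcal T(\mbf 0,\{1\})=[0,1)^2$, each with $\wsgn=+1$, so your per-tile identity would give $+2$ while the indicator jump for that tile is $+1$. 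The paper quietly sidesteps this: when the lemma is actually invoked in Theorem~\ref{thm:1pointconstant}, the extra hypothesis that $\mbf q$ ``is not on the boundary of any facet'' is imposed, and that is precisely what forces $\mbf q$ into the relative interior of every facet containing it, hence onto at most one facet of each tile. Your proof becomes complete once you replace the genericity appeal with this hypothesis (or note that the lemma as stated tacitly needs it).
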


With Lemma~\ref{lem:contrib} in mind, we partition each set $\Facetset{~}{\mbf z, \tau}$ into two subsets. 
\begin{definition}\label{def:FupFdown}
    Let
    \begin{align*}\Facetset{\uparrow}{\mbf z,\tau} &:= 
    \biguplus_{i \in \what\tau}\left\{
    F \in \Facetset{~}{\mbf z, \tau} : \wsgn(F)\tsgn(F) > 0\right\} \text{, and}\\
    \Facetset{\downarrow}{\mbf z,\tau} &:= 
    \biguplus_{i \in \what\tau}\left\{
    F \in \Facetset{~}{\mbf z, \tau} : \wsgn(F)\tsgn(F) < 0\right\}.
    \end{align*}
\end{definition}

In the next two subsections, we will prove Corollary~\ref{cor:DoubleIntersection} which shows that the facets in $\Facetset{\uparrow}{\mbf z,\tau}$ and the facets in $\Facetset{\downarrow}{\mbf z,\tau}$ each contain a collection of disjoint subsets of $\R^{r+k}$ whose union covers the same subset of $\R^{r+k}$ (except possibly a minor inconsistency at the boundary). 

In Section~\ref{sec:together}, we will combine this result with Lemma~\ref{lem:contrib} to show that $f$ is constant.

\subsection{A Few Linear Algebra Tools}

The next two subsections form the most technical part of the paper. In this section, we give some general linear algebra techniques. 

We first give two versions of the well-known Cramer's rule (see, e.g., \cite[Section~2.1.2]{Nonlinear}). 

\begin{lemma}[Cramer's Rule Version 1] \label{lem:Cramer1}

Let ${\mbf v}_1,\dots,{\mbf v}_{n+1},\mbf a$ be a collection of vectors in $\R^n$ such that  
\[\begin{bmatrix}
\vrule height 10pt depth 0pt width 0pt \vline & \vline &  & \vline \\
\mbf v_1 &\mbf v_2 & \hdots & \mbf v_n\\
\vrule height 10pt depth 0pt width 0pt \vline & \vline &  & \vline \\
\end{bmatrix} \begin{bmatrix}
\vrule height 10pt depth 0pt width 0pt \vline \\
\mbf a \\
\vrule height 10pt depth 0pt width 0pt \vline \\
\end{bmatrix} = \begin{bmatrix}
\vrule height 10pt depth 0pt width 0pt \vline \\
\mbf v_{n+1}\\
\vrule height 10pt depth 0pt width 0pt \vline \\
\end{bmatrix}.\]
For every $i \in [n]$, the $i^{th}$ entry of $\mbf a$, which we will denote $a_i$, is given by 
\begin{equation}\label{eq:cramer1}a_i = \det(\begin{bmatrix}
\vrule height 10pt depth 0pt width 0pt \vline & &\vline & \vline & \vline & & \vline \\
\mbf v_1 & \hdots & \mbf v_{i-1}&\mbf v_{n+1} &\mbf v_{i+1} & \hdots & \mbf v_n\\
\vrule height 10pt depth 0pt width 0pt \vline & &\vline & \vline & \vline &  & \vline \\
\end{bmatrix})/ \det(\begin{bmatrix}
\vrule height 10pt depth 0pt width 0pt \vline & \vline &  & \vline \\
\mbf v_1 &\mbf v_2 & \hdots & \mbf v_n\\
\vrule height 10pt depth 0pt width 0pt \vline & \vline &  & \vline \\
\end{bmatrix})\end{equation}

\end{lemma}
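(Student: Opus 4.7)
The plan is to deduce the formula from multilinearity of the determinant together with its alternating property. Let $V$ denote the $n \times n$ matrix with columns $\mbf{v}_1, \ldots, \mbf{v}_n$. The hypothesis $V\mbf{a} = \mbf{v}_{n+1}$ is equivalent to the vector identity
\[
\mbf{v}_{n+1} = \sum_{j=1}^{n} a_j\, \mbf{v}_j,
\]
so I would substitute this expression into the $i$-th column slot of the numerator determinant appearing in equation~\eqref{eq:cramer1}.

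Expanding by multilinearity in the $i$-th column, the numerator becomes
\[
\sum_{j=1}^{n} a_j\, \det\bigl[\mbf{v}_1, \ldots, \mbf{v}_{i-1}, \mbf{v}_j, \mbf{v}_{i+1}, \ldots, \mbf{v}_n\bigr].
\]
For each $j \neq i$, the vector $\mbf{v}_j$ appears twice in the displayed matrix (once in its original column $j$ and once in column $i$), so the alternating property of the determinant forces that term to vanish. Only the $j = i$ summand survives, and it equals $a_i \det(V)$. Dividing by $\det(V)$ yields the claimed formula.

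There is no genuine technical obstacle here; the only subtlety is that the statement implicitly requires $\det(V) \neq 0$, since otherwise $\mbf{a}$ is not uniquely determined by the matrix equation and the displayed quotient is not defined. In the subsequent applications of this lemma within the paper, $V$ will arise either as a fragment matrix whose invertibility has been assumed or as a submatrix whose nonzero determinant is guaranteed by the sufficient genericity of $\w$, so this hypothesis is harmless in context.
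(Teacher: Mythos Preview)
Your argument is correct and is the standard proof of Cramer's rule via multilinearity and the alternating property. The paper does not actually supply a proof of this lemma; it simply cites it as well-known, so there is nothing further to compare.
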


Notice that by rearranging the columns of the numerator of~\eqref{eq:cramer1}, we obtain the following expression for $a_i$. 
\begin{equation}\label{eq:cramer1alt}a_i = (-1)^{n-i}\det(\begin{bmatrix}
\vrule height 10pt depth 0pt width 0pt \vline & &\vline & \vline & & \vline \\
\mbf v_1 & \hdots & \mbf v_{i-1} &\mbf v_{i+1} & \hdots & \mbf v_{n+1}\\
\vrule height 10pt depth 0pt width 0pt \vline & &\vline & \vline &   & \vline \\
\end{bmatrix})/ \det(\begin{bmatrix}
\vrule height 10pt depth 0pt width 0pt \vline & \vline &  & \vline \\
\mbf v_1 &\mbf v_2 & \hdots & \mbf v_n\\
\vrule height 10pt depth 0pt width 0pt \vline & \vline &  & \vline \\
\end{bmatrix})\end{equation}

We will also use an alternate version of Cramer's rule, which can be obtained from Lemma~\ref{lem:Cramer1} through straightforward algebraic means. 
In particular, this second version follows from replacing the entries of $\mbf a$ in the matrix equation with the expressions given in~\eqref{eq:cramer1alt}, then expanding the product, multiplying the denominator, and bringing all of the terms to the same side. 

\begin{lemma}[Cramer's Rule Version 2] \label{lem:Cramer2}

Let ${\mbf v}_1,\dots,{\mbf v}_{n+1}$ be a collection of vectors in $\R^n$. Then, 
\[\sum_{i \in [n+1]}(-1)^i\det(\begin{bmatrix}
\vrule height 10pt depth 0pt width 0pt \vline & &\vline  & \vline & & \vline \\
\mbf v_1 & \hdots & \mbf v_{i-1}&\mbf v_{i+1} & \hdots & \mbf v_{n+1}\\
\vrule height 10pt depth 0pt width 0pt \vline & &\vline  & \vline &  & \vline \\
\end{bmatrix})\begin{bmatrix}
\vrule height 10pt depth 0pt width 0pt \vline \\
\mbf v_{i}\\
\vrule height 10pt depth 0pt width 0pt \vline \\
\end{bmatrix} = \mbf 0\]
\end{lemma}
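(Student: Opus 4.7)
The plan is to reduce the vector identity to $n$ scalar identities, one per coordinate, each of which asserts the vanishing of a determinant with a repeated row. Fix $\ell \in [n]$ and form the $(n+1)\times(n+1)$ matrix $B_\ell$ obtained by placing the single row $\bigl((\mbf v_1)_\ell, \ldots, (\mbf v_{n+1})_\ell\bigr)$ on top of the $n \times (n+1)$ matrix $[\,\mbf v_1\mid\cdots\mid\mbf v_{n+1}\,]$. By construction, row $1$ of $B_\ell$ equals row $\ell+1$, so $\det(B_\ell) = 0$. Laplace-expanding $\det(B_\ell)$ along the first row expresses this zero as
\[
0 = \sum_{i=1}^{n+1} (-1)^{1+i}(\mbf v_i)_\ell \det\!\begin{bmatrix} \vline & & \vline & \vline & & \vline \\ \mbf v_1 & \cdots & \mbf v_{i-1} & \mbf v_{i+1} & \cdots & \mbf v_{n+1} \\ \vline & & \vline & \vline & & \vline \end{bmatrix},
\]
because deleting the first row and $i$-th column of $B_\ell$ leaves exactly the $n \times n$ matrix $[\,\mbf v_1\mid\cdots\mid\widehat{\mbf v_i}\mid\cdots\mid\mbf v_{n+1}\,]$. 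Multiplying through by $-1$ converts $(-1)^{1+i}$ into $(-1)^i$ and matches the $\ell$-th coordinate of the claimed vector identity. Since this holds for every $\ell \in [n]$, the vectors agree coordinatewise and the identity follows.

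An alternative route, matching the hint preceding the lemma, is to invoke Lemma~\ref{lem:Cramer1} directly. Assuming $\mbf v_1, \ldots, \mbf v_n$ are independent, \eqref{eq:cramer1alt} supplies the coefficients of $\mbf v_{n+1}$ in that basis; clearing the denominator, transferring the resulting $i = n+1$ contribution across the equation, and multiplying by $(-1)^{n+1}$ yields the identity. The degenerate case in which $\mbf v_1, \ldots, \mbf v_n$ are dependent can then be handled by a perturbation/continuity argument, since the identity is polynomial in the entries of the $\mbf v_i$ and vanishes on the dense subset where independence holds. I would prefer the first (repeated-row) approach, since it is uniform across all configurations and requires no case split.

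There is no substantive obstacle here; the only care required is the sign bookkeeping that converts the $(-1)^{1+i}$ arising from the cofactor expansion into the $(-1)^i$ appearing in the statement, and the verification that deleting row $1$ and column $i$ of $B_\ell$ really does produce the correct $n\times n$ matrix. Both are routine.
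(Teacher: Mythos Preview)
Your proposal is correct. Your second route---deriving the identity from Lemma~\ref{lem:Cramer1} by substituting~\eqref{eq:cramer1alt}, clearing the denominator, and rearranging---is exactly the argument the paper sketches in the paragraph preceding the lemma (the paper does not write out a formal proof beyond that sketch).

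Your first and preferred route, via the vanishing determinant of the $(n+1)\times(n+1)$ matrix $B_\ell$ with a repeated row, is a genuinely different argument. It has a real advantage over the paper's approach: it works uniformly for all configurations of $\mbf v_1,\dots,\mbf v_{n+1}$, whereas the Cramer's-rule derivation presupposes that $\mbf v_1,\dots,\mbf v_n$ are independent and then needs the density/continuity patch you mention to cover the degenerate case. The cost is that the repeated-row argument is slightly less thematically connected to Lemma~\ref{lem:Cramer1}, which the paper seems to want to emphasize. Either way, the sign bookkeeping you flag is the only delicate point, and you have handled it correctly.
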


Before returning to the context of the paper, we give one more technical result which follows from Cramer's rule. We will work with vectors $\mbf v_1,\dots,\mbf v_{k+1} \in \R^k$, and write $\mbf w''$ for our sufficiently generic vector. Let 

\[ V := \begin{bmatrix}
\vrule height 10pt depth 0pt width 0pt \vline & \vline &  & \vline \\
\mbf v_1 &\mbf v_2 & \hdots & \mbf v_{k+1}\\
\vrule height 10pt depth 0pt width 0pt \vline & \vline &  & \vline \\
\end{bmatrix}\]
be an $k \times (k+1)$ matrix of rank $k$. For each $i \in [k+1]$, let $V_{\what i}$ be the $k \times k$ matrix obtained by removing the $i^{th}$ column of $V$. Additionally, recall the zonotope $Z(V)$ which was defined in Definition~\ref{def:halfopen}. Note that since the columns of $V$ have rank $k$, the kernel of $V$ is one-dimensional. 

\begin{prop}\label{prop:doublecover}
Consider the definitions in the previous paragraph and let $\mbf h = (h_1,\dots,h_{n+1})^\top$ be any non-zero vector in the kernel of $V$. Then, 
\begin{equation}\label{eq:ztiling}
\bigsqcup_{i\in [n+1]}\left\{\begin{array}{ll}\parp(V_{\what i}) &\text{if $h_i<0$}\\\parp(V_{\what i}) + {\mbf v}_{i} &\text{if $h_i>0$}\end{array}\right\} =Z(V).\end{equation}
\end{prop}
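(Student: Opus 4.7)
The plan is to exploit the one-dimensional kernel of $V$: since $V$ has rank $k$ and $k+1$ columns, $\ker V = \R \mbf h$. For any $\mbf p$ in the closure of $Z(V)$, pick any preimage $\mbf x^{(0)} \in [0,1]^{k+1}$ with $V\mbf x^{(0)} = \mbf p$; then every other preimage is $\mbf x^{(0)} + t\mbf h$ for some $t \in \R$, and the admissible set $I(\mbf p) := \{t \in \R : \mbf x^{(0)} + t\mbf h \in [0,1]^{k+1}\}$ is a closed interval $[t_-, t_+]$. Generically exactly one coordinate $i^\ast = i^\ast(\mbf p)$ sits at a $\{0,1\}$-boundary at $t = t_+$, and necessarily $h_{i^\ast} \ne 0$ (an index with $h_i = 0$ is locked away from the boundary, and in that case $V_{\hat i}$ is singular and its parallelepiped is empty).

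Next I would identify the closed tile. If $h_{i^\ast} > 0$ then the $i^\ast$-th coordinate is increasing in $t$ and hits $1$ first, so $\mbf p = \mbf v_{i^\ast} + \sum_{j \ne i^\ast}(x^{(0)}_j + t_+ h_j)\mbf v_j$ with coefficients in $[0,1]$; hence $\mbf p$ lies in the closure of $\mbf v_{i^\ast} + \parp(V_{\widehat{i^\ast}})$. If $h_{i^\ast} < 0$ then this coordinate decreases and hits $0$ first, and analogously $\mbf p$ lies in the closure of $\parp(V_{\widehat{i^\ast}})$. These two cases match exactly the tile on the LHS of \eqref{eq:ztiling} indexed by $i^\ast$. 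Uniqueness of $t_+$ and $i^\ast$ (generically) yields a covering of the closure of $Z(V)$ whose pieces overlap only on codimension-$1$ faces where two coordinates reach a boundary simultaneously.

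To upgrade from closed to half-open, I would use that $Z(V)$ and every parallelepiped on the LHS of \eqref{eq:ztiling} share the same $\mbf w''$-convention of Definition~\ref{def:halfopen}. For any boundary point $\mbf p$, genericity of $\mbf w''$ guarantees that $\mbf p + \epsilon\mbf w''$ lies in the interior of the closed zonotope and of a unique closed LHS tile for all sufficiently small $\epsilon > 0$. Since the half-open definitions say $\mbf p$ lies in $Z(V)$ (respectively, in a given LHS tile) precisely when $\mbf p + \epsilon\mbf w''$ lies in the corresponding interior, the half-open disjoint union reduces to the closed covering already established.

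The main obstacle, and where I expect to spend the most care, is the half-open bookkeeping: checking that $i^\ast(\mbf p + \epsilon\mbf w'')$ is stable as $\epsilon \to 0^+$, and that the explicit description of $\parp(V_{\hat i})$ given by Lemma~\ref{lem:halfopen} is compatible with the ``generic nudge'' tile assignment. I expect Cramer's rule (Lemmas~\ref{lem:Cramer1} and~\ref{lem:Cramer2}) to be the right bridge here, since it relates the sign pattern of $V_{\hat i}^{-1}\mbf w''$ to that of $\mbf h$, which is exactly what is needed to reconcile the two viewpoints.
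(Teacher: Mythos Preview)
Your proposal is correct and takes essentially the same approach as the paper's proof: exploit the one-dimensional kernel, slide a preimage along the line $\mbf x + t\mbf h$ inside the cube $[0,1]^{k+1}$, and read off the unique index $i^\ast$ from whichever coordinate first hits $\{0,1\}$ at the maximal $t$. The case split on $\sgn(h_{i^\ast})$ matches the paper's exactly.

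The only organizational difference is the order in which the $\epsilon$-nudge and the kernel-slide are performed. The paper nudges \emph{first}: it fixes $\mbf p \in Z(V)$, replaces $\mbf p$ by $\mbf p + \epsilon\mbf w''$ (still in the closed zonotope by Definition~\ref{def:halfopen}), runs the $\zeta$-maximization on that nudged point, and if two coordinates tie at the boundary it simply halves $\epsilon$ and repeats, invoking genericity of $\mbf w''$ to guarantee termination. You instead propose to do the closed argument first and then upgrade to half-open. Both orderings work; the paper's ordering has the advantage that the ``stable $i^\ast$ as $\epsilon \to 0^+$'' issue you flag as the main obstacle never arises, because one never needs to compare $i^\ast(\mbf p)$ with $i^\ast(\mbf p + \epsilon\mbf w'')$ --- one only ever computes the latter.

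Relatedly, your anticipated use of Cramer's rule to reconcile the sign pattern of $V_{\hat i}^{-1}\mbf w''$ with that of $\mbf h$ is unnecessary here. The half-open bookkeeping is handled entirely by Definition~\ref{def:halfopen}: membership in $\parp(V_{\hat i})$ (or its translate) is \emph{defined} by the behavior of $\mbf p + \epsilon\mbf w''$ in the closed parallelepiped, so once you know the nudged point lands in the closed tile with coefficients in $(0,1)$ for $j \ne i^\ast$, you are done. No explicit sign comparison is needed.
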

\begin{proof}
    It is immediate that all of the parallelepipeds on the left are contained in the zonotope on the right. Thus, it suffices to show that for any $\mbf p \in Z(V)$, there is a unique $i \in \what\tau$ such that $h_i > 0$ and $\mbf p \in \parp(V_{\what i})$ or $h_i < 0$ and $\mbf p \in \parp(V_{\what i})+ {\mbf v}_{i}$. 

    Fix $\epsilon>0$, ensuring that this value is small enough that $\mbf p + \epsilon \mbf w''\in Z(V)$. By definition, there exists some \[\mbf x = (x_1,x_2,\dots,x_{k+1})^\top \in [0,1]^{k+1}\] such that $\mbf p + \epsilon \mbf w'' = V\mbf x=\sum_{i \in [k+1]} x_i{\mbf v}_{i}$. Furthermore, since $\mbf h$ generates the kernel of $V$, it follows that for every real number $\zeta$, we have
    \[\mbf p + \epsilon\mbf w''= V(\mbf x + \zeta \mbf h).\]
    In fact, the line given by $\mbf x + \zeta \mbf h$ is precisely the set of vectors which are mapped to $\mbf p + \epsilon\mbf w''$ by $V$. 

    Next, we consider the intersection of the line $\mbf x + \zeta \mbf h$ with the region $[0,1]^{k+1}$. This is a line segment containing the point $\mbf x$. Let $\mbf y = (y_1,\dots,y_{k+1})^\top$ be the point on this line segment where $\zeta$ is maximized.

    In order for $\mbf p + \epsilon \mbf w''$ to be in at least one of the parallelepipeds on the left of~\eqref{eq:ztiling}, it is necessary that for some $j \in \what\tau$, one of the following conditions hold:
    \begin{enumerate}
        \item $\mbf p+\epsilon\mbf w'' = V\mbf u$ for some $\mbf u= (u_1,\dots,u_{k+1}) \in [0,1]^{k+1}$ where $u_j =0$. Furthermore, $h_j < 0$.
        \item $\mbf p+\epsilon\mbf w'' = V\mbf u$ for some $\mbf u= (u_1,\dots,u_{k+1}) \in [0,1]^{k+1}$ where $u_j =1$. Furthermore, $h_j > 0$.
    \end{enumerate}

    For both cases, given any $\epsilon' >0$, we must have $\mbf u + \epsilon' \mbf h \not\in [0,1]^{k+1}$. In particular, the $j^{th}$ entry of this vector is less than $0$ in the first case and greater than $1$ in the second. The only vector $\mbf u \in (\mbf x + \zeta \mbf h)\cap[0,1]^{k+1}$ for which this condition holds is $\mbf y$, since this is where $\zeta$ is maximized. Thus, we can restrict our attention to this vector. 

    We have established that $\mbf p + \epsilon\mbf w''= V\mbf y$, where $\mbf y \in [0,1]^{k+1}$, and for some $j \in [k+1]$, we have either $ y_j = 0$ and $h_j < 0$ or $ y_j=1$ and $h_j > 0$. 

    Suppose that there is more than one $j$ such that $y_j \in \{0,1\}$. Then, we just replace $\epsilon$ with $\epsilon/2$ and restart the proof. It follows from the fact that $\mbf w''$ is sufficiently generic that this process will eventually terminate. 
    
    Let $\mbf y'$ be the vector $\mbf y$ after removing the $y_j$ entry. Then, it follows that
    \[\mbf p+\epsilon\mbf w'' = \begin{cases}V\mbf y'& \text{ if $h_i<0$, and}\\
    V\mbf y' + {\mbf {v}}_i& \text{ if $h_i > 0$.}\end{cases}\]
    
We also know that $\mbf y' \in [0,1]^k$, so this implies that $\mbf p \in \parp(V_{\what i})$ if $h_i <0$ and $\mbf p \in \parp(V_{\what i}) + \mbf v_i$ if $h_i >0$. Furthermore, $\mbf p$ cannot be in any other parallelepipeds on the left of~\eqref{eq:ztiling} by the condition that $y_i \not\in \{0,1\}$ for $i \not= j$.  
\end{proof}

\begin{remark}
    Proposition~\ref{prop:doublecover} can be thought of as a special case of~\cite[Proposition 3.2.1]{BBY}, but it is presented here in a self-contained manner. 
\end{remark}

\subsection{A Pairing Among Facets}
With these tools in mind, we now return to our goal of proving Proposition~\ref{prop:doublecover}.

Consider some $\sigma \in \binom{[r+k]}{r}$ such that $S_\sigma(M)$ is invertible. 
We will write \begin{equation}\label{eq:lambda}\bm{\lambda}^\sigma = (\lambda^\sigma_1,\lambda^\sigma_2,\dots,\lambda^\sigma_{r+k})^\top := S_\sigma(M)^{-1}\w.\end{equation}
We also write $\bm{\lambda}^\sigma_\sigma$ and $\bm{\lambda}^\sigma_{\what \sigma}$ for the restriction of $\bm{\lambda}^\sigma$ to entries in $\sigma$ or $\what \sigma$ respectively. 

We can use the block structure of $S_\sigma(M)$ to give expressions for $\w'$ and $\w''$. 

\begin{lemma}\label{lem:restrictedLambda}
    Given $\bm{\lambda}^\sigma_\sigma$ and $\bm{\lambda}^\sigma_{\what \sigma}$ as defined in \eqref{eq:lambda}, $\w' = \proj(\w)$, and $\w'' = \nproj(\w)$. Then, we also have
    \[ C_\sigma(M)\bm{\lambda}^\sigma_\sigma = \w' \hspace{1cm}\text{ and } \hspace{1cm}\oline{C}_{\what \sigma}(M)\bm{\lambda}^\sigma_{\what \sigma} = \w''.\]
\end{lemma}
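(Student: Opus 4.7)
The plan is to derive both equalities directly from the defining relation $S_\sigma(M)\bm{\lambda}^\sigma = \w$ by applying the two projection maps $\proj$ and $\nproj$, and then using the block structure of $S_\sigma(M)$ recorded in Lemma~\ref{lem:MSproj}.

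First, I would expand the defining matrix equation column-by-column as
\[\sum_{i \in [r+k]} \lambda^\sigma_i \, S_\sigma(M)_i = \w.\]
Applying $\proj$ to both sides and using linearity gives
\[\sum_{i \in [r+k]} \lambda^\sigma_i \, \proj(S_\sigma(M)_i) = \proj(\w) = \w'.\]
By Lemma~\ref{lem:MSproj}, $\proj(S_\sigma(M)_i) = \mbf c_i$ for $i \in \sigma$ and $\proj(S_\sigma(M)_i) = \mbf 0$ for $i \in \what\sigma$, so the left-hand side collapses to $\sum_{i \in \sigma} \lambda^\sigma_i \mbf c_i$. Since the columns of $C_\sigma(M)$ are precisely $\mbf c_{\sigma(1)}, \dots, \mbf c_{\sigma(r)}$ and the entries of $\bm{\lambda}^\sigma_\sigma$ are indexed in the same order, this sum is exactly $C_\sigma(M) \bm{\lambda}^\sigma_\sigma$, giving the first equality.

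The second equality follows by the same argument applied to $\nproj$: the only surviving terms are indexed by $i \in \what\sigma$, where $\nproj(S_\sigma(M)_i) = \oline{\mbf c}_i$, and the resulting sum $\sum_{i \in \what\sigma} \lambda^\sigma_i \oline{\mbf c}_i$ is exactly $\oline{C}_{\what\sigma}(M)\bm{\lambda}^\sigma_{\what\sigma}$.

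There is no serious obstacle here; the whole content of the lemma is that the block decomposition of $S_\sigma(M)$ (zeros in the lower $k$ entries of the columns indexed by $\sigma$, zeros in the upper $r$ entries of the columns indexed by $\what\sigma$) lets the single equation $S_\sigma(M)\bm{\lambda}^\sigma = \w$ decouple into two independent equations in the two blocks. The only thing to be careful about is the bookkeeping: making sure the ordering conventions for $\sigma$ and $\bm{\lambda}^\sigma_\sigma$ agree so that the partial sum really is $C_\sigma(M)\bm{\lambda}^\sigma_\sigma$ rather than some permutation of it, which follows from the stated convention that subsets of $[r+k]$ are ordered from smallest to largest.
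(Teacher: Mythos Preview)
Your proof is correct and follows exactly the approach the paper intends: the paper states this lemma without proof, noting only that it comes from ``the block structure of $S_\sigma(M)$,'' and you have simply spelled out that block-structure argument using Lemma~\ref{lem:MSproj}.
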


Recall the function $\wsgn$ defined in Definition~\ref{def:wsgn}. The vector $\bm{\lambda}$ offers an alternate expression for $\wsgn(F)$. 
\begin{lemma}\label{lem:wsgn}
    Fix $\tau \in \binom{[r+k]}{r-1}$ and consider $F \in  \Facetset{~}{\mbf z, \tau}$. Let $\lambda_j^{\tau \cup j}$ be as defined in~\eqref{eq:lambda}. 
    If $F = \mathcal {\widetilde F} ( \mbf z, \tau \cup j, j, s)$, then 
        \[\wsgn(F) = (-1)^s\sgn(\lambda_j^{\tau \cup j}).\]
\end{lemma}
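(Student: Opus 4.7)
The plan is to unpack the definitions of $\widetilde{\mathcal F}$, $\mathcal F$, and $\Pi(S_{\tau\cup j}(M))$ and reduce the question of whether $\mbf q + \epsilon \w$ lies in $\Pi(S_{\tau\cup j}(M))$ to a one-dimensional check on the $j$th coordinate in the basis given by the columns of $S_{\tau\cup j}(M)$.

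First I would record that since $F \in \Facetset{~}{\mbf z,\tau}$, the index $j$ lies in $\what\tau$, so $j \in \sigma := \tau \cup j$. Applying Definition~\ref{def:tildefacet} in the case $j \in \sigma$ gives $\widetilde{\mathcal F}(\mbf z, \sigma, j, s) = \mathcal F(\mbf z - s\mbf e_j, \sigma, j, s)$, which matches exactly the translate that Definition~\ref{def:wsgn} removes. Hence for an interior point $\mbf p$ of $F$, the point $\mbf q := \mbf p - M(\mbf z - s\mbf e_j)$ lies on the boundary of $\Pi(S_\sigma(M))$, as required for $\wsgn$ to be computed.

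Next I would change coordinates to the basis given by $S_\sigma(M)$: write $\mbf y = S_\sigma(M)^{-1}\mbf q$. Comparing with Definition~\ref{def:facets}, together with the shift by $S_\sigma(M)_j$ that is applied when $s=1$, one reads off that $y_j = s$, while for $\mbf p$ chosen on the relative interior of $F$ the remaining coordinates satisfy $0 < y_i < 1$ for every $i \neq j$. Since $S_\sigma(M)^{-1}(\mbf q + \epsilon\w) = \mbf y + \epsilon \bm{\lambda}^\sigma$, for all sufficiently small $\epsilon > 0$ the coordinates indexed by $i \neq j$ stay strictly inside $(0,1)$, so Lemma~\ref{lem:halfopen} reduces membership in $\Pi(S_\sigma(M))$ to a single condition on the $j$th coordinate $s + \epsilon \lambda_j^\sigma$.

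Finally I would case-split on $s$. When $s = 0$, the condition becomes $\epsilon \lambda_j^\sigma \geq 0$ with strictness depending on the sign of $\lambda_j^\sigma$, and for small $\epsilon > 0$ this holds precisely when $\lambda_j^\sigma > 0$; thus $\wsgn(F) = \sgn(\lambda_j^\sigma)$. When $s = 1$, the condition becomes $1 + \epsilon \lambda_j^\sigma < 1$ or $\leq 1$, which for small $\epsilon > 0$ holds precisely when $\lambda_j^\sigma < 0$; thus $\wsgn(F) = -\sgn(\lambda_j^\sigma)$. Combining, $\wsgn(F) = (-1)^s \sgn(\lambda_j^\sigma)$. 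The only subtlety is that $\lambda_j^\sigma \neq 0$, which is automatic from the sufficient genericity of $\w$, and that different interior points of $F$ yield the same sign, which follows because the condition depends only on $\lambda_j^\sigma$ and not on $\mbf p$. There is no substantial obstacle here; the proof is essentially a careful bookkeeping of definitions.
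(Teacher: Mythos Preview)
Your proposal is correct and follows essentially the same approach as the paper: translate $\mbf p$ back to the boundary of $\Pi(S_{\tau\cup j}(M))$, change to coordinates via $S_{\tau\cup j}(M)^{-1}$, observe that the $j$th coordinate equals $s$ while the others lie strictly in $(0,1)$, and then analyze whether $s+\epsilon\lambda_j^{\tau\cup j}$ lands in the unit interval for small $\epsilon>0$. The paper's proof is essentially identical (it uses $\mbf x$ where you use $\mbf y$ and does not cite Lemma~\ref{lem:halfopen} explicitly, simply checking $0<x_i+\epsilon\lambda_i<1$).
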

\begin{proof}
Let $\mbf p$ be a point on the interior of $\mathcal {\widetilde F} ( \mbf z, \tau \cup j, j, s)$ and set $\mbf q = \mbf p - M(\mbf z + s \mbf e_j)$. Next, let \[\mbf x  = (x_1,\dots,x_n)^\top:= S_{\tau \cup j}(M)^{-1}\mbf q.\]
    By Definitions~\ref{def:facets} and~\ref{def:tildefacet}, we must have $0 < x_i < 1$ for all $i \in [r+k]\setminus j$ and $x_j = s$. 

Now, consider Definition~\ref{def:wsgn}. For a fixed $\epsilon >0$, we have 
    \[\mbf q + \epsilon \mbf w = S_{\tau \cup j}(M)\mbf x + \epsilon S_{\tau \cup j}(M) \bm {\lambda}^{\tau \cup j} = S_{\tau \cup j}(M)(\mbf x + \epsilon \bm {\lambda}^{\tau \cup j}).\]
It follows that $\mbf q + \epsilon \mbf w \in \parp(S_{\tau \cup j}(M))$ if and only if $0 < x_i + \epsilon {\lambda}^{\tau \cup j}_i <1$ for all $i \in [r+k]$. For $i \not=j$, this is always true when $\epsilon$ is sufficiently small. Thus, we just need to consider $x_j + \epsilon {\lambda}^{\tau \cup j}_i = s + \epsilon {\lambda}^{\tau \cup j}_i$. This sum is between $0$ and $1$ for sufficiently small $\epsilon>0$ if and only if $(-1)^s{\lambda}^{\tau \cup j}_i > 0$. 

\end{proof}

\begin{corollary}\label{cor:updownalt}
Fix $\mbf z \in \Z^{r+k}$ and $\tau \in \binom{[r+k]}{r-1}$, and for $j \in \what \tau$, let $\lambda_j^{\tau \cup j}$ be as defined above. Then, we have the following alternate version of Definition~\ref{def:FupFdown}:
    \begin{align*}\Facetset{\uparrow}{\mbf z,\tau} &= 
   \biguplus_{j \in \what \tau}\left\{\begin{array}{ll}
    \mathcal{\widetilde F} ( \mbf z, \tau \cup j, j, 0) &\text{if  $\lambda_{j}^{\tau \cup j} \det(S_{\tau \cup j}(M)) > 0$,}\\
    \mathcal{\widetilde F} ( \mbf z, \tau \cup j, j, 1) &\text{otherwise,}\end{array}\right\}\\
    \Facetset{\downarrow}{\mbf z,\tau} &= \biguplus_{j \in \what \tau}\left\{\begin{array}{ll}
    \mathcal{\widetilde F} ( \mbf z, \tau \cup j, j, 0) &\text{if  $\lambda_{j}^{\tau \cup j} \det(S_{\tau \cup j}(M)) < 0$,}\\
    \mathcal{\widetilde F} ( \mbf z, \tau \cup j, j, 1) &\text{otherwise.}\end{array}\right\}\\
    \end{align*}
\end{corollary}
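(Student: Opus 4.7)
The plan is to prove Corollary~\ref{cor:updownalt} by direct computation, combining the definitions of $\wsgn$ and $\tsgn$ with Lemma~\ref{lem:wsgn}, then reading off the cases. Since by Definition~\ref{def:FupFdown} the sets $\Facetset{\uparrow}{\mbf z,\tau}$ and $\Facetset{\downarrow}{\mbf z,\tau}$ are determined by the sign of the product $\wsgn(F)\tsgn(F)$, the task reduces to rewriting this product in terms of the quantities $\lambda_j^{\tau\cup j}$ and $\det(S_{\tau\cup j}(M))$.

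Concretely, I would fix $F = \mathcal{\widetilde F}(\mbf z, \tau\cup j, j, s) \in \Facetset{~}{\mbf z,\tau}$. By Lemma~\ref{lem:wsgn} we have $\wsgn(F) = (-1)^s \sgn(\lambda_j^{\tau\cup j})$, and by Definition~\ref{def:tsgn} we have $\tsgn(F) = \sgn(\det(S_{\tau\cup j}(M)))$. Multiplying these,
\[
\wsgn(F)\tsgn(F) = (-1)^s\,\sgn\!\bigl(\lambda_j^{\tau\cup j}\det(S_{\tau\cup j}(M))\bigr).
\]
(Note that invertibility of $S_{\tau\cup j}(M)$ is automatic whenever $F$ is non-empty, so neither factor vanishes; likewise sufficient genericity of $\w$ ensures $\lambda_j^{\tau\cup j}\neq 0$.)

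From this, the characterization is immediate. For each $j \in \what\tau$ there are exactly two facets in $\Facetset{~}{\mbf z,\tau}$ with this index $j$, namely the choices $s=0$ and $s=1$. The $s=0$ facet lies in $\Facetset{\uparrow}{\mbf z,\tau}$ iff $\lambda_j^{\tau\cup j}\det(S_{\tau\cup j}(M))>0$ and in $\Facetset{\downarrow}{\mbf z,\tau}$ otherwise, while the $s=1$ facet lies in the opposite set because of the extra factor $(-1)^s$. Taking the disjoint union over $j\in\what\tau$ reproduces the cases displayed in the statement.

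There is essentially no obstacle here: the work is entirely in assembling the earlier named results. The only point that deserves a brief sanity check is the ``otherwise'' branch in the two displayed formulas, to confirm that when $\lambda_j^{\tau\cup j}\det(S_{\tau\cup j}(M))<0$, the $s=1$ facet is indeed what must be collected (and symmetrically for $\Facetset{\downarrow}{\mbf z,\tau}$); this follows directly from the $(-1)^s$ factor above. The corollary will then be a one-paragraph deduction rather than a technical argument.
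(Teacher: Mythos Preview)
Your proposal is correct and follows essentially the same approach as the paper: the paper's proof is the single sentence ``This follows immediately from Definition~\ref{def:FupFdown} after applying Definition~\ref{def:tsgn} and Lemma~\ref{lem:wsgn},'' and your argument is exactly an unpacking of that sentence, computing $\wsgn(F)\tsgn(F)=(-1)^s\sgn\bigl(\lambda_j^{\tau\cup j}\det(S_{\tau\cup j}(M))\bigr)$ and reading off the two cases for $s\in\{0,1\}$.
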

\begin{proof}
    This follows immediately from Definition~\ref{def:FupFdown} after applying Definition~\ref{def:tsgn} and Lemma~\ref{lem:wsgn}.
\end{proof}

Next, we consider the projections of these facets to the last $k$ coordinates. For this result, we work under the same assumptions as those given before Proposition~\ref{prop:doublecover}. In particular, our parallelepipeds and zonotope are in $\R^k$ and the sufficiently generic vector is $\mbf w''$. 

\begin{corollary}\label{cor:updownproj}
Fix $\mbf z \in \Z^{r+k}$ and $\tau \in \binom{[r+k]}{r-1}$, and for $j \in \what \tau$, let $\lambda_j^{\tau \cup j}$ be as defined above. Recall that $\nproj(F)$ is the projection of $F$ to the last $k$ coordinates. We have the following two equalities:
    \begin{align*}\biguplus_{F \in \Facetset{\uparrow}{\mbf z,\tau}} \nproj(F)- \nproj(M\mbf z) &= 
   \biguplus_{j \in \what \tau}\left\{\begin{array}{ll}
    \parp(\oline{C}_{\what\tau\setminus j}(M)) &\text{if  $\lambda_{j}^{\tau \cup j} \det(S_{\tau \cup j}(M)) > 0$,}\\
    \parp(\oline{C}_{\what\tau\setminus j}(M)) + \oline{\mbf c}_j &\text{otherwise,}\end{array}\right\}\\
   \biguplus_{F \in \Facetset{\downarrow}{\mbf z,\tau}} \nproj(F) - \nproj(M\mbf z) &= 
   \biguplus_{j \in \what \tau}\left\{\begin{array}{ll}
    \parp(\oline{C}_{\what\tau\setminus j}(M)) &\text{if  $\lambda_{j}^{\tau \cup j} \det(S_{\tau \cup j}(M)) < 0$,}\\
    \parp(\oline{C}_{\what\tau\setminus j}(M)) + \oline{\mbf c}_j &\text{otherwise.}\end{array}\right\}\\
    \end{align*}
\end{corollary}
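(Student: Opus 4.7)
The plan is to reduce the corollary to a term-by-term comparison using Corollary~\ref{cor:updownalt}, Lemma~\ref{lem:proj}, and Lemma~\ref{lem:proj2}. Both disjoint unions in the statement are indexed by $j \in \what\tau$ (the set $\Facetset{\uparrow}{\mbf z,\tau}$ via Corollary~\ref{cor:updownalt}, and the right-hand union explicitly), so it suffices to identify, for each individual $j$, the projection $\nproj(F) - \nproj(M\mbf z)$ with the corresponding parallelepiped on the right. Applying Corollary~\ref{cor:updownalt} tells me that for each $j \in \what\tau$, exactly one of $\mathcal{\widetilde F}(\mbf z, \tau\cup j, j, 0)$ or $\mathcal{\widetilde F}(\mbf z, \tau\cup j, j, 1)$ belongs to $\Facetset{\uparrow}{\mbf z,\tau}$ (and the opposite one to $\Facetset{\downarrow}{\mbf z,\tau}$), governed by the sign of $\lambda_j^{\tau\cup j}\det(S_{\tau\cup j}(M))$. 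The two equalities in the corollary then follow once I handle the $s=0$ and $s=1$ cases.

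For the $s=0$ facet, I would apply the second equation of Lemma~\ref{lem:proj} with $\sigma = \tau\cup j$. Since $j \in \sigma$, we have $\what\sigma = \what\tau \setminus j$ and $j \notin \what\sigma$, so the sum in the lemma runs over $i \in \what\tau \setminus j$, precisely the columns of $\oline{C}_{\what\tau\setminus j}(M)$. Comparing the half-open sign conditions to those produced by Lemma~\ref{lem:halfopen} applied to $\oline{C}_{\what\tau\setminus j}(M)$ with orientation $\w''$, I would recognize
\[\nproj(\mathcal{\widetilde F}(\mbf z, \tau\cup j, j, 0)) - \nproj(M\mbf z) = \parp(\oline{C}_{\what\tau\setminus j}(M)).\]
For the $s=1$ facet, I would invoke Lemma~\ref{lem:proj2}; because $j \in \tau\cup j$, that lemma contributes an additive shift by $\oline{\mbf c}_j$, giving
\[\nproj(\mathcal{\widetilde F}(\mbf z, \tau\cup j, j, 1)) - \nproj(M\mbf z) = \parp(\oline{C}_{\what\tau\setminus j}(M)) + \oline{\mbf c}_j.\]

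Assembling these identifications across $j \in \what\tau$, matched against the choice of $s$ dictated by Corollary~\ref{cor:updownalt}, delivers the two equalities in the statement. The main obstacle is really just bookkeeping: verifying that the sign-indexed half-open conditions produced by Lemma~\ref{lem:proj} coincide with those produced by Lemma~\ref{lem:halfopen} for the matrix $\oline{C}_{\what\tau\setminus j}(M)$. Because both are controlled by the same vector $\oline{C}_{\what\tau\setminus j}(M)^{-1}\w''$, this check is immediate, and a parallel argument handles $\Facetset{\downarrow}{\mbf z,\tau}$ with the sign inequality reversed.
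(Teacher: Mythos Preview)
Your proposal is correct and follows essentially the same approach as the paper: the paper's proof simply says to apply Lemmas~\ref{lem:proj} and~\ref{lem:proj2} to Corollary~\ref{cor:updownalt}, together with the observation that when $j\in\sigma=\tau\cup j$ the second expression in Lemma~\ref{lem:proj} is exactly $\parp(\oline{C}_{\what\sigma}(M))+\nproj(M\mbf z)$. Your write-up spells out the same reduction in more detail, including the bookkeeping that the half-open sign conditions from Lemma~\ref{lem:proj} match those of Lemma~\ref{lem:halfopen} for $\oline{C}_{\what\tau\setminus j}(M)$ via the common vector $\oline{C}_{\what\tau\setminus j}(M)^{-1}\w''$.
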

\begin{proof}
    The result follows after applying Lemmas~\ref{lem:proj} and~\ref{lem:proj2} to Corollary~\ref{cor:updownalt}. Note that when $j \in \sigma$, the second expression in Lemma~\ref{lem:proj} is equivalent to 
    \[\nproj( \widetilde{\mathcal F}( \mbf z, \sigma, j, 0) ) =\oline{C}_{\what\sigma}(M) + \nproj(M\mbf z).\]
\end{proof}
Notice that the right side of Corollary~\ref{cor:updownproj} looks similar to the left side of Proposition~\ref{prop:doublecover}. Now, we will explore the product $\lambda_{j}^{\tau \cup j} \det(S_{\tau \cup j}(M))$ in order to eventually apply this proposition. 

We can find an alternate expression for the value of $\lambda_j^{\tau \cup j}$ using Cramer's rule.
\begin{lemma}\label{lem:lambda}The following equality holds for any $\tau \in \binom{[r+k]}{r-1}$ and $j \in \what \tau$.  
    \[\lambda_j^{\tau \cup j} = 
    \frac{\det(\begin{bmatrix}C_\tau(M) \vert \w'\end{bmatrix})}{\det(C_{\tau\cup j}(M))} \cdot \frac{\sgn(\tau,j,\what\tau \setminus j)}{\sgn{(\tau\cup j, \what \tau \setminus j)}}.\] 
\end{lemma}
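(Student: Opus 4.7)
The plan is to reduce the computation to a standard Cramer's rule calculation on the smaller matrix $C_{\tau\cup j}(M)$, and then track the sign that comes from reordering columns.

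First, set $\sigma = \tau\cup j$, and recall from Lemma~\ref{lem:restrictedLambda} that $\bm\lambda^\sigma_\sigma = C_\sigma(M)^{-1}\w'$. So $\lambda_j^\sigma$ is exactly the $\sigma^{-1}(j)$-th entry of the vector $C_\sigma(M)^{-1}\w'$. Writing $i := \sigma^{-1}(j)$ (so $\mbf c_j$ is the $i$-th column of $C_\sigma(M)$), Cramer's rule (Lemma~\ref{lem:Cramer1}) gives
\[
\lambda_j^\sigma \;=\; \frac{\det\!\bigl[\mbf c_{\sigma(1)}\,\vert\,\cdots\,\vert\,\mbf c_{\sigma(i-1)}\,\vert\,\w'\,\vert\,\mbf c_{\sigma(i+1)}\,\vert\,\cdots\,\vert\,\mbf c_{\sigma(r)}\bigr]}{\det(C_\sigma(M))}.
\]

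Next, I would move the $\w'$ column from position $i$ to the rightmost position $r$. The remaining columns, read left to right, are exactly $\mbf c_{\tau(1)},\dots,\mbf c_{\tau(r-1)}$, so the numerator becomes $(-1)^{r-i}\det\!\bigl[C_\tau(M)\,\vert\,\w'\bigr]$. Thus
\[
\lambda_j^\sigma \;=\; (-1)^{r-i}\,\frac{\det\!\bigl[C_\tau(M)\,\vert\,\w'\bigr]}{\det(C_{\tau\cup j}(M))}.
\]

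Finally, I would identify the scalar $(-1)^{r-i}$ with the ratio of permutation signs. Since $\tau\cup j=\sigma$, we have $\what\tau\setminus j=\what\sigma$, so $\sgn(\tau\cup j,\what\tau\setminus j)=\sgn(\sigma,\what\sigma)$ and $\sgn(\tau,j,\what\tau\setminus j)=\sgn(\tau,j,\what\sigma)$. Both sequences contain the same elements, so their ratio equals the sign of the permutation that rearranges $(\tau,j)$ into $\sigma$; this amounts to moving $j$ from position $r$ to position $i$ within an already-sorted list $\tau$, requiring $r-i$ adjacent transpositions. Hence
\[
\frac{\sgn(\tau,j,\what\tau\setminus j)}{\sgn(\tau\cup j,\what\tau\setminus j)} \;=\; (-1)^{r-i},
\]
and substituting into the previous display yields the claimed formula. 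The whole argument is bookkeeping; there is no real obstacle beyond correctly matching the sign conventions, which is the only place where a careful check is needed.
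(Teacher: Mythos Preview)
Your proof is correct and follows essentially the same route as the paper: reduce to $C_{\tau\cup j}(M)\bm\lambda^{\tau\cup j}_{\tau\cup j}=\w'$ via Lemma~\ref{lem:restrictedLambda}, apply Cramer's rule, and then identify the resulting $(-1)^{r-i}$ with the ratio of permutation signs. The paper's proof is terser (it cites the already-shifted form~\eqref{eq:cramer1alt} directly and leaves the sign identification implicit), but your more explicit tracking of the column rearrangement and the transposition count is exactly the bookkeeping the paper is suppressing.
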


\begin{proof}
    Begin with the first expression in  Lemma~\ref{lem:restrictedLambda} and apply Cramer's rule (Lemma~\ref{lem:Cramer1}). Specifically,  we use the version presented in~\eqref{eq:cramer1alt}. Since entries indexed by elements of $\what{\tau}\setminus j$ are ignored, the result follows. 
\end{proof}

\begin{prop}\label{prop:h}
For $\tau \in \binom{[r+k]}{r-1}$, let $\mbf {h}= (h_{\what{\tau}(1)}, \dots, h_{\what\tau(k+1)})^\top$ be defined by 
\[ h_{j} := \lambda_{j}^{\tau \cup j} \det(S_{\tau \cup j}(M))\] 
for every $j \in \what\tau$. Then, 
\[\mbf {h} \in \ker(\oline{C}_{\what{\tau}}(M)).\]
\end{prop}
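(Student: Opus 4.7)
The plan is to compute $\oline{C}_{\what\tau}(M)\mbf h$ directly by expanding the definition of $h_j$ and reducing to Cramer's rule version~2.

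First, I would simplify $h_j$ using the earlier lemmas. By Lemma~\ref{lem:SandC},
\[ \det(S_{\tau\cup j}(M)) = \det(C_{\tau\cup j}(M))\det(\oline{C}_{\what\tau\setminus j}(M))\sgn(\tau\cup j,\what\tau\setminus j), \]
and by Lemma~\ref{lem:lambda},
\[ \lambda_j^{\tau\cup j} = \frac{\det([C_\tau(M)\mid \w'])}{\det(C_{\tau\cup j}(M))}\cdot\frac{\sgn(\tau,j,\what\tau\setminus j)}{\sgn(\tau\cup j,\what\tau\setminus j)}. \]
Multiplying, the denominators cancel and we obtain
\[ h_j \;=\; \det([C_\tau(M)\mid \w'])\cdot\det(\oline{C}_{\what\tau\setminus j}(M))\cdot \sgn(\tau,j,\what\tau\setminus j). \]
The factor $\det([C_\tau(M)\mid \w'])$ is independent of $j$, so it can be pulled out of any linear combination.

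Next, I would interpret the sign combinatorially. If $j=\what\tau(i)$, then passing from the ordering $(\tau,j,\what\tau\setminus j)$ to $(\tau,\what\tau)$ requires exactly $i-1$ adjacent transpositions (moving $j$ from position $|\tau|+1$ to position $|\tau|+i$), so
\[ \sgn(\tau,\what\tau(i),\what\tau\setminus\what\tau(i)) \;=\; (-1)^{i-1}\sgn(\tau,\what\tau). \]
Again, $\sgn(\tau,\what\tau)$ is independent of $i$.

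Finally, I would compute
\[ \oline{C}_{\what\tau}(M)\mbf h \;=\; \sum_{i=1}^{k+1} h_{\what\tau(i)}\,\oline{\mbf c}_{\what\tau(i)} \;=\; \det([C_\tau(M)\mid \w'])\cdot\sgn(\tau,\what\tau)\cdot\sum_{i=1}^{k+1}(-1)^{i-1}\det(\oline{C}_{\what\tau\setminus\what\tau(i)}(M))\,\oline{\mbf c}_{\what\tau(i)}. \]
The inner sum is precisely (up to an overall sign) the identity of Cramer's rule version~2 (Lemma~\ref{lem:Cramer2}) applied to the $k+1$ vectors $\oline{\mbf c}_{\what\tau(1)},\dots,\oline{\mbf c}_{\what\tau(k+1)}\in\R^k$, and therefore vanishes. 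This yields $\oline{C}_{\what\tau}(M)\mbf h=\mbf 0$, as required.

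There is essentially no obstacle here: once the sign bookkeeping is done, the statement reduces immediately to Lemma~\ref{lem:Cramer2}. The only thing to be careful about is tracking the two sign factors ($\sgn(\tau,j,\what\tau\setminus j)$ and the permutation sign arising in Cramer's rule) and confirming that together they match the $(-1)^i$ pattern required by Lemma~\ref{lem:Cramer2}.
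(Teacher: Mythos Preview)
Your proposal is correct and follows essentially the same route as the paper's proof: substitute Lemmas~\ref{lem:SandC} and~\ref{lem:lambda}, cancel $\det(C_{\tau\cup j}(M))$ and $\sgn(\tau\cup j,\what\tau\setminus j)$, factor out the $j$-independent pieces, and recognize the remaining sum as an instance of Lemma~\ref{lem:Cramer2}. Your sign bookkeeping via $\sgn(\tau,\what\tau(i),\what\tau\setminus\what\tau(i))=(-1)^{i-1}\sgn(\tau,\what\tau)$ is in fact a bit cleaner than the paper's, which instead rewrites the sign as $(-1)^{k(r+1)-1}\sgn(\what\tau\setminus j,j,\tau)$ before invoking Cramer's rule.
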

\begin{proof}
    We use Lemmas~\ref{lem:SandC} and~\ref{lem:lambda} to find the following sequence of equalities.
    
    \begin{align*}
    \sum_{j \in \what \tau} h_{j}\oline{\mbf c}_{j} &= \\
    \sum_{j\in\what\tau} \lambda_{j}^{\tau \cup j} \det(S_{\tau \cup j}(M))\oline{\mbf c}_{j} &=\\ 
    \sum_{j \in\what\tau} \frac{\det(\begin{bmatrix}C_\tau(M) \vert \w'\end{bmatrix})}{\det(C_{\tau\cup j}(M))} \frac{\sgn(\tau,j,\what\tau \setminus j)}{\sgn{(\tau\cup j, \what \tau \setminus j)}} \hspace{2.5cm}& \\
    \cdot \det(C_{\tau \cup j}(M))\det(\oline C_{\what\tau \setminus j}(M)) \sgn(\tau \cup j,\what\tau\setminus j)\oline{\mbf c}_{j} &=\\
    \det(\begin{bmatrix}C_\tau(M) \vert \w'\end{bmatrix})\sum_{j\in\what\tau} \sgn(\tau,j,\what\tau \setminus j) \det(\oline C_{\what\tau \setminus j}(M)) \oline{\mbf c}_{j}
    &=\\
    (-1)^{k(r+1)-1}\det(\begin{bmatrix}C_\tau(M) \vert \w'\end{bmatrix})\sum_{j\in\what\tau} \sgn(\what\tau \setminus j,j,\tau) \det(\oline C_{\what\tau \setminus j}(M)) \oline{\mbf c}_{j} &= \mbf 0.\end{align*}
    
    The first two equalities are direct substitution of previously defined values. The third equality involves cancellation of a pair of sign terms, a pair of determinant terms, and factoring out a determinant which is independent of \(j\). The fourth equality is a simple change of sign. The final equality holds from Lemma~\ref{lem:Cramer2}.
  The sum is $\mbf 0$, so the coefficients outside the sum are irrelevant.\end{proof}

\begin{figure}
    \centering
    \begin{tabular}{cc}
\begin{tikzpicture}[scale=1.4]
\scriptsize
\node[coordinate] (b1) at (-2, 0) {};
\node[coordinate] (b3) at (1, 2) {};
\node[coordinate] (b4) at (-1, 3) {};
\draw ($(b4)$ ) -- ($(b4) + (b3)$ );
\draw[dotted] ($(b4) + (b3)$ ) -- ($(b4) + (b1) + (b3)$ );
\draw ($(b4) + (b1) + (b3)$ ) -- ($(b4) + (b1)$ ) -- ($(b4)$ ); 
\node at ($(b4) + 0.5*(b3) + 0.5*(b1)$ ) {$\widetilde{\mathcal F}( \mbf 0, \{2,4\}, 4, 1)$};
\draw (0,0) -- (b1) -- ($(b1) + (b4)$ ) -- (b4) -- cycle; 
\node at ($0.5*(b1) + 0.5*(b4)$ ) {$\widetilde{\mathcal F}( \mbf 0, \{2,3\}, 3, 0)$};
\draw[dotted] (0,0) -- (b3) -- ($(b3) + (b4)$ );
\draw ($(b3) + (b4)$ ) -- (b4) -- (0,0); 
\node at ($0.5*(b3) + 0.5*(b4)$ ) {$\widetilde{\mathcal F}( \mbf 0, \{1,2\}, 1, 0)$};

\draw[->] (0,0) -- (1,1);
\node at (1.2,1) {$\mbf w''$};
\node[circle, fill=black, inner sep=2pt] (0) at (0,0) {};
\node at (0.2,0) {$\mbf 0$};

\end{tikzpicture}

&
\begin{tikzpicture}[scale=1.4]
\scriptsize
\node[coordinate] (b1) at (-2, 0) {};
\node[coordinate] (b3) at (1, 2) {};
\node[coordinate] (b4) at (-1, 3) {};
\draw (0,0) -- (b1) -- ($(b1) + (b3)$ ) -- (b3);
\draw[dotted] (b3) -- (0,0); 
\node at ($0.5*(b3) + 0.5*(b1)$ ) {$\widetilde{\mathcal F}( \mbf 0, \{2,4\}, 4, 0)$};
\draw[dotted] ($(b3)$ ) -- ($(b4) + (b3)$ ) -- ($(b4) + (b1) + (b3)$ );
\draw ($(b4) + (b1) + (b3)$ ) -- ($(b3) + (b1)$ ) -- (b3); 
\node at ($(b3) + 0.5*(b4) + 0.5*(b1)$ ) {$\widetilde{\mathcal F}( \mbf 0, \{2,3\}, 3, 1)$};
\draw ($(b1)$ ) -- ($(b4) + (b1)$ ) -- ($(b4) + (b1) + (b3)$ ) -- ($(b3) + (b1)$ ) -- cycle; 
\node at ($(b1) + 0.5*(b3) + 0.5*(b4)$ ) {$\widetilde{\mathcal F}( \mbf 0, \{1,2\}, 1, 1)$};

\draw[->] (0,0) -- (1,1);
\node at (1.2,1) {$\mbf w''$};
\node[circle, fill=black, inner sep=2pt] (0) at (0,0) {};
\node at (0.2,0) {$\mbf 0$};
\end{tikzpicture} \\
 \(\Facetset{\uparrow}{\mbf 0, \{2\} } \) & \(\Facetset{\downarrow}{\mbf 0, \{2\} } \)
    \end{tabular}
    
    \caption{On the left (resp. right) are the projections \(\nproj (F)\) for each facet \(F\) in \(\Facetset{\uparrow}{\mbf 0, \{2\} } \) (resp. \(\Facetset{\downarrow}{\mbf 0, \{2\} }\)). It follows from Theorem~\ref{thm:sameproj} that the union of the projections in either set corresponds to the zonotope $Z(\oline C_{\{1,3,4\}}(M))$.}
    \label{fig:projF}
\end{figure}

Now we reach the final results of the section, which are a culmination of all of our previous work. 

\begin{theorem}\label{thm:sameproj}
    Fix $\mbf z \in \Z^{r+k}$ and $\tau \in \binom{[r+k]}{r-1}$. We have the following equality:
    \[\bigsqcup_{F \in \Facetset{\uparrow}{\mbf z,\tau}} \nproj(F) = Z(\oline{C}_{\what{\tau}}(M)) + \nproj(M\mbf z) = \bigsqcup_{F \in \Facetset{\downarrow}{\mbf z,\tau}} \nproj(F).\]
\end{theorem}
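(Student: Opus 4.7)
The plan is to combine Corollary~\ref{cor:updownproj} with Propositions~\ref{prop:h} and~\ref{prop:doublecover} in essentially one step. After subtracting the common translation $\nproj(M\mbf z)$ from every facet, the theorem reduces to showing that two particular disjoint unions of $k$-dimensional parallelepipeds (or their translates by some $\oline{\mbf c}_j$) both equal the zonotope $Z(\oline C_{\what\tau}(M))$.

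First, Corollary~\ref{cor:updownproj} rewrites each of the two unions explicitly: over $j \in \what\tau$ we select either $\parp(\oline C_{\what\tau\setminus j}(M))$ or its translate by $\oline{\mbf c}_j$, with the choice governed by the sign of $h_j := \lambda_j^{\tau\cup j}\det(S_{\tau\cup j}(M))$ and swapped between the ``up'' and ``down'' collections. Next, Proposition~\ref{prop:h} places the vector $\mbf h = (h_j)_{j \in \what\tau}$ in the kernel of $\oline C_{\what\tau}(M)$. When $\oline C_{\what\tau}(M)$ has rank $k$, the sufficient genericity of $\w$ forces $\mbf h \neq \mbf 0$; when its rank is strictly less than $k$, every $k \times k$ minor $\det(\oline C_{\what\tau\setminus j}(M))$ vanishes, so by Lemma~\ref{lem:SandC} every $\det(S_{\tau\cup j}(M))$ vanishes as well, the relevant facets are empty, and both sides of the theorem reduce to $\emptyset$.

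Assuming $\oline C_{\what\tau}(M)$ has full row rank, the proof is then a direct application of Proposition~\ref{prop:doublecover} with $V = \oline C_{\what\tau}(M)$. Using $\mbf h$ itself as the kernel vector matches the sign conventions of the ``down'' collection verbatim and yields $\bigsqcup_{F \in \Facetset{\downarrow}{\mbf z,\tau}} \nproj(F) - \nproj(M\mbf z) = Z(\oline C_{\what\tau}(M))$. For the ``up'' collection the translated/untranslated choice is reversed, so I would invoke Proposition~\ref{prop:doublecover} a second time with the kernel vector $-\mbf h$, which flips every sign condition $h_j > 0$ versus $h_j < 0$ and produces the analogous equality for $\Facetset{\uparrow}{\mbf z,\tau}$. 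Translating by $\nproj(M\mbf z)$ finally gives the claimed chain of equalities.

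The main hazard in executing this plan is bookkeeping: one must verify that the sign conditions in Corollary~\ref{cor:updownproj} correspond exactly to those in Proposition~\ref{prop:doublecover} under $\mbf h \mapsto \pm\mbf h$, and that the $\biguplus$-disjointness recorded in Corollary~\ref{cor:updownproj} actually upgrades to genuine subset-disjointness $\bigsqcup$ as claimed. Both are delivered for free by Proposition~\ref{prop:doublecover}, so I expect no real obstruction beyond careful case-tracking.
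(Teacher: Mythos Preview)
Your proposal is correct and follows essentially the same route as the paper: the paper's proof is exactly ``$\mbf h$ from Proposition~\ref{prop:h} lies in the kernel, hence so does $-\mbf h$; now apply Proposition~\ref{prop:doublecover} to the two equalities in Corollary~\ref{cor:updownproj}.'' Your added discussion of the degenerate rank case and the $\biguplus$ vs.\ $\bigsqcup$ bookkeeping is more careful than the paper itself, but the core argument is identical.
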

\begin{proof}
Let $\mbf h$ be as defined in Proposition~\ref{prop:h}. By Proposition~\ref{prop:h}, the vector $\mbf h$ is in $\ker(\oline{C}_{\what{\tau}}(M))$. This means that $-\mbf h$ is also in $\ker(\oline{C}_{\what{\tau}}(M))$. The result then follows from applying Proposition~\ref{prop:doublecover} to both of the equalities in Corollary~\ref{cor:updownproj}. 
\end{proof}

\begin{corollary}\label{cor:DoubleIntersection}
Fix $\mbf z \in \Z^{r+k}$ and $\tau \in \binom{[r+k]}{r-1}$. Let $\mbf p$ be a point that is not on the boundary of any facet in $\Facetset{~}{\mbf z,\tau}$. Then, if $\mbf p$ is contained in some facet in $\Facetset{~}{\mbf z,\tau}$, it must be contained in exactly two of these facets. Furthermore, one of these facets must be in $\Facetset{\uparrow}{\mbf z,\tau}$ while the other must be in $\Facetset{\downarrow}{\mbf z,\tau}$.
\end{corollary}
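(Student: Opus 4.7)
The plan is to reduce the corollary to its $\nproj$-shadow via the block-product structure of the facets, then invoke Theorem~\ref{thm:sameproj}. First, I would observe that since every column of the fragment matrix $S_\sigma(M)$ is supported entirely in either the first $r$ coordinates (when its index lies in $\sigma$) or the last $k$ coordinates (when in $\what\sigma$), each facet $F \in \Facetset{~}{\mbf z,\tau}$ is the Cartesian product of $\proj(F)$ and $\nproj(F)$ under the splitting $\R^{r+k} = \R^r \times \R^k$. Consequently, $\mbf p \in F$ if and only if $\proj(\mbf p) \in \proj(F)$ and $\nproj(\mbf p) \in \nproj(F)$, and the relative interiors match up analogously.

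Next I would apply Proposition~\ref{prop:sameproj}, which tells us that the relative interior of $\proj(F)$ is a single open parallelepiped $P^\circ$, common to every $F \in \Facetset{~}{\mbf z,\tau}$. Under the hypothesis that $\mbf p$ is not on the boundary of any such facet, if $\mbf p$ lies in some $F_0 \in \Facetset{~}{\mbf z,\tau}$ then $\mbf p$ is in the relative interior of $F_0$, so $\proj(\mbf p) \in P^\circ$. In particular, the top-coordinate containment condition is satisfied for every facet in the collection simultaneously, so for any $F \in \Facetset{~}{\mbf z,\tau}$, whether $\mbf p \in F$ depends solely on whether $\nproj(\mbf p) \in \nproj(F)$.

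Finally, I would invoke Theorem~\ref{thm:sameproj}, which asserts that the sets $\nproj(F)$ for $F \in \Facetset{\uparrow}{\mbf z,\tau}$ form a set-theoretic partition of the zonotope $Z(\oline{C}_{\what\tau}(M)) + \nproj(M\mbf z)$, and likewise for $F \in \Facetset{\downarrow}{\mbf z,\tau}$. Since $\nproj(\mbf p) \in \nproj(F_0)$ lies in this zonotope, the partition yields exactly one $F_\uparrow \in \Facetset{\uparrow}{\mbf z,\tau}$ and exactly one $F_\downarrow \in \Facetset{\downarrow}{\mbf z,\tau}$ whose $\nproj$-projections contain $\nproj(\mbf p)$. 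Combined with step two, these are precisely the two facets of $\Facetset{~}{\mbf z,\tau}$ containing $\mbf p$, and by construction one is an up-facet and the other a down-facet.

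The main obstacle here is essentially bookkeeping rather than conceptual: one has to be careful that the half-open boundaries of the facets and the hypothesis ``not on the boundary of any facet'' are interpreted consistently, so that membership reduces cleanly to the condition that $\nproj(\mbf p)$ lies strictly in the interior of $\nproj(F)$, letting the set-theoretic disjointness in Theorem~\ref{thm:sameproj} pin down exactly one facet on each side. All of the genuine work has been absorbed into proving Theorem~\ref{thm:sameproj}; after that, the corollary is the simple observation that a product structure with a common factor on one side reduces a pair of disjoint-union partitions to the unique-pairing statement.
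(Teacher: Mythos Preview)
Your proposal is correct and follows essentially the same approach as the paper: use Proposition~\ref{prop:sameproj} to handle the $\proj$-direction uniformly, then invoke Theorem~\ref{thm:sameproj} to pin down exactly one facet from each of $\Facetset{\uparrow}{\mbf z,\tau}$ and $\Facetset{\downarrow}{\mbf z,\tau}$ via the $\nproj$-partition of the zonotope. Your explicit articulation of the product structure $F = \proj(F)\times\nproj(F)$ makes the reduction slightly cleaner than the paper's phrasing, but the argument is the same.
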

\begin{proof}
    By Theorem~\ref{thm:sameproj}, the facets within $\Facetset{\uparrow}{\mbf z,\tau}$ or $\Facetset{\downarrow}{\mbf z,\tau}$ do not overlap and the union of their projections to the last $k$ coordinates cover the same space.
    Therefore, for a point \(\mbf x\) in the relative interior of some element of \(\Facetset{\uparrow}{\mbf z,\tau}\), the point \(\nproj (\mbf x) \) determines a unique element \(F\) of \(\Facetset{\uparrow}{\mbf z,\tau}\) which also contains the point \(\nproj (\mbf x) \).
    Moreover, by Proposition~\ref{prop:sameproj}, the projection of \(\mbf x\) to the first $r$ coordinates intersects \(\proj (F)\).
    Therefore the point \(\mbf x\) is contained in a unique element of \(\Facetset{\downarrow}{\mbf z,\tau}\).
    The reverse argumentation is analogous, and the result follows. 
    
\end{proof}

\begin{remark}
    We require that $\mbf p$ is not on the boundary of any facet for Corollary~\ref{cor:DoubleIntersection} because the projections of the facets to the first $r$ coordinates do not all have the same boundaries. We suspect that there may be more elegant ways to deal with these boundary points, but we were able to avoid them for our proofs in Section~\ref{sec:together}.  
\end{remark}

\begin{example} \label{ex:projtiles1}
Recall our running example with  \[M = \begin{bmatrix} 
3 & 2 & -4 & 1\\
1 & 0 & 2 & 2\\
2 & 0 & -1 & 1\\
0 & 1 & -2 & 3\\
\end{bmatrix},\] and $\mbf w = (1,1,1,1)^\top$. The set $\Facetset{~}{\mbf 0,\{2\}}$ is made up of the $6$ facets of the form $\widetilde{\mathcal F}( \mbf 0, \{2,j\}, j, s)$, where $j \in \{1,3,4\}$ and $s \in \{0,1\}$. 

Through direct calculation using Corollary~\ref{cor:updownalt}, one can show that 
\begin{align*} \Facetset{\uparrow}{\mbf 0,\{2\}} &= \left\{\widetilde{\mathcal F}( \mbf 0, \{1,2\}, 1, 0), \widetilde{\mathcal F}( \mbf 0, \{2,3\}, 3, 0), \widetilde{\mathcal F}( \mbf 0, \{2,4\}, 4, 1)\right\}, \text{ and}\\
\Facetset{\downarrow}{\mbf 0,\{2\}} &= \left\{\widetilde{\mathcal F}( \mbf 0, \{1,2\}, 1, 1), \widetilde{\mathcal F}( \mbf 0, \{2,3\}, 3, 1), \widetilde{\mathcal F}( \mbf 0, \{2,4\}, 4, 0)\right\}.
\end{align*}

See Figure~\ref{fig:projF} for the projections of the facets in each collection to their last $k$ coordinates. For both collections, the union of these projections forms the zonotope 
\[ Z(\oline C_{\{1,3,4\}}(M)) = Z(\begin{bmatrix} -2 &  1 & -1\\
0 &  2 & -3\end{bmatrix}).\] This matches what we would expect from Theorem~\ref{thm:sameproj}. 
\end{example}

Before stating the main result of the section, we quickly discuss the case when working with $\FacetsetO{~}{\mbf z,\gamma}$ for $\mbf z \in \Z^{r+k}$ and $\gamma \in \binom{[r+k]}{r+1}$. 

The main difference when working in this alternate perspective is that the role of the first $r$ and last $k$ coordinates swap. Furthermore, $\tau \cup j$ is replaced with $\gamma \setminus j$ and there are occasional sign changes. Nevertheless, the main ideas are analogous in this setting. In particular, $\FacetsetO{~}{\mbf z,\gamma}$ can also be divided into two subsets which one might call $\FacetsetO{\uparrow}{\mbf z,\gamma}$ and $\FacetsetO{\downarrow}{\mbf z,\gamma}$. 

\begin{theorem}\label{thm:1pointconstant}
    Fix $\mbf p \in \R^{r+k}$ and $\epsilon>0$ such that the line segment between $\mbf p$ and $\mbf p + \epsilon\w$ only intersects the set of all facets at a single point $\mbf q$, which is not on the boundary of any facet. Then,
    \[ f(\mbf p + \epsilon\w) = f(\mbf p).\]
\end{theorem}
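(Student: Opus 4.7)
The plan is to combine Lemma~\ref{lem:contrib}, Corollary~\ref{cor:allFacets}, and Corollary~\ref{cor:DoubleIntersection} (together with its analogue for $\FacetsetO{~}{\mbf z,\gamma}$), so that every facet containing $\mbf q$ gets canceled by a partner in the same collection.

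First I would apply Lemma~\ref{lem:contrib} to rewrite
\[ f(\mbf p + \epsilon\w) - f(\mbf p) = \sum_{F \ni \mbf q} \wsgn(F)\tsgn(F),\]
where the sum is over all facets of tiles in $\mathbf T(M)$ containing $\mbf q$. The hypothesis that $\mbf q$ is the unique intersection point and is not on the boundary of any facet guarantees that this formula applies and that only relative interiors of facets are involved.

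Next I would use Corollary~\ref{cor:allFacets} to partition the set of all facets into collections of the form $\Facetset{~}{\mbf z,\tau}$ and $\FacetsetO{~}{\mbf z,\gamma}$. Accordingly, the sum above splits as
\[ \sum_{F \ni \mbf q} \wsgn(F)\tsgn(F) = \sum_{(\mbf z, \tau)} \sum_{\substack{F \in \Facetset{~}{\mbf z,\tau} \\ F \ni \mbf q}} \wsgn(F)\tsgn(F) + \sum_{(\mbf z, \gamma)} \sum_{\substack{F \in \FacetsetO{~}{\mbf z,\gamma} \\ F \ni \mbf q}} \wsgn(F)\tsgn(F).\]
Only finitely many collections contribute, since only finitely many facets contain any given point.

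Then I would fix one collection $\Facetset{~}{\mbf z,\tau}$ and apply Corollary~\ref{cor:DoubleIntersection}. Since $\mbf q$ is not on the boundary of any facet, it is either contained in zero facets of $\Facetset{~}{\mbf z,\tau}$, or in exactly two: one $F^\uparrow \in \Facetset{\uparrow}{\mbf z,\tau}$ and one $F^\downarrow \in \Facetset{\downarrow}{\mbf z,\tau}$. By Definition~\ref{def:FupFdown}, $\wsgn(F^\uparrow)\tsgn(F^\uparrow) = +1$ and $\wsgn(F^\downarrow)\tsgn(F^\downarrow) = -1$, so their contributions cancel. An identical argument, invoking the analogue of Corollary~\ref{cor:DoubleIntersection} for $\FacetsetO{~}{\mbf z,\gamma}$ mentioned just before the theorem, handles each collection of the second type. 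Summing over all collections gives $0$, which is exactly what we need. The only real obstacle is verifying that the analogous statement for $\FacetsetO{~}{\mbf z,\gamma}$ genuinely follows from the symmetric reasoning used in Section~\ref{sec:crossing} (swapping the roles of the first $r$ and last $k$ coordinates), but the paper asserts this symmetry is automatic from the construction.
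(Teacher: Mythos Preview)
Your proposal is correct and follows essentially the same approach as the paper: apply Lemma~\ref{lem:contrib}, partition the facets via Corollary~\ref{cor:allFacets}, and then use Corollary~\ref{cor:DoubleIntersection} (plus its $\FacetsetO{~}{\mbf z,\gamma}$ analogue) with Definition~\ref{def:FupFdown} to see that each collection contributes zero.
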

\begin{proof}
    Recall that Lemma~\ref{lem:contrib} says that \[ f(\mbf p + \epsilon\w) - f(\mbf p) = \sum_{F \ni \mbf q} \wsgn(F)\tsgn(F),\] where the sum is taken over all facets of tiles in $\mbf T(M)$ which contain $\mbf q$. 

    By Corollary~\ref{cor:allFacets}, the set of all facets can be represented as the union of $\Facetset{~}{\mbf z,\tau}$ and $\FacetsetO{~}{\mbf z,\gamma}$ for all $\mbf z \in \Z^{r+k}$, $\tau \in \binom{[r+k]}{r-1}$, and $\gamma \in \binom{[r+k]}{r+1}$. 

    Combining Corollary~\ref{cor:DoubleIntersection} and Definition~\ref{def:FupFdown}, the total contribution to the sum is \(0\) for the facets contained in any particular $\Facetset{~}{\mbf z,\tau}$. 
    In particular, if any facets in $\Facetset{~}{\mbf z,\tau}$ contain $\mbf q$, there must be exactly one with $\wsgn(F)\tsgn(F) = 1$ and exactly one with $\wsgn(F)\tsgn(F) = -1$. 
    The same is true for the facets in $\FacetsetO{~}{\mbf z,\gamma}$, as discussed in the paragraph leading into this theorem statement. 

    Therefore, $f(\mbf p + \epsilon\w) - f(\mbf p)$ is equal to the sum of a collection of terms which are all zero. 
    This implies that $f(\mbf p + \epsilon\w) - f(\mbf p) =0$ and $f(\mbf p + \epsilon\w) = f(\mbf p) $.
\end{proof}

In the next section, we will show that Theorem~\ref{thm:1pointconstant} can be generalized to show that $f$ must be constant among all points in $\R^{r+k}$. 

\section{Putting it all Together}\label{sec:together}

\begin{lemma}\label{lem:offset}
    Consider any $\mbf p \in \R^{r+k}$. There exists some $\epsilon > 0$ such that $f(\mbf p + \epsilon\mbf w) = f(\mbf p)$ and $\mbf p + \epsilon\mbf w $ is not on the boundary of any tile. 
\end{lemma}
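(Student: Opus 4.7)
The plan is to exploit the fact that the half-open parallelepiped in Definition~\ref{def:halfopen} was designed precisely for this type of perturbation: membership in the half-open tile is determined by membership of a small $\mbf w$-push in the closed tile. So the desired conclusion should follow essentially by unwinding definitions, provided we can pick one $\epsilon > 0$ that simultaneously (a) avoids all nearby facet hyperplanes and (b) is small enough for the half-open characterization to apply to every relevant tile.

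First I would reduce to a finite problem. Since the tiles in $\mbf T(M)$ are translates by the discrete lattice $M\Z^{r+k}$ of a finite family of parallelepipeds, only finitely many tiles have closures meeting any fixed bounded neighborhood of $\mbf p$. Call this finite collection $\mathcal S$. For every $T \notin \mathcal S$, both $\supp_T(\mbf p) = 0$ and $\supp_T(\mbf p + \epsilon \mbf w) = 0$ for all sufficiently small $\epsilon > 0$, so such tiles contribute nothing to either side of the desired identity.

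Next, consider the finitely many affine hyperplanes that support facets of tiles in $\mathcal S$. Because $\mbf w$ is sufficiently generic (Section~\ref{sec:fixedvalues}), it is not parallel to any of these hyperplanes, so the ray $\{\mbf p + t \mbf w : t > 0\}$ meets each of them in at most one point. Let $\epsilon_0 > 0$ be strictly smaller than all of these finitely many positive intersection parameters. Then for every $\epsilon \in (0, \epsilon_0]$, the point $\mbf p + \epsilon \mbf w$ lies on no facet-supporting hyperplane, hence is not on the boundary of any tile.

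For such $\epsilon$, and for each $T \in \mathcal S$, the point $\mbf p + \epsilon \mbf w$ is either in the topological interior of $T$ or outside the closure of $T$. By Definition~\ref{def:halfopen}, $\mbf p \in T$ if and only if $\mbf p + \delta \mbf w$ lies in the closed parallelepiped of $T$ for all sufficiently small $\delta > 0$; equivalently, if and only if $\mbf p + \epsilon \mbf w$ lies in the interior of $T$ once $\epsilon$ is small enough. Shrinking $\epsilon \le \epsilon_0$ so that this characterization simultaneously applies to every $T \in \mathcal S$ (a finite condition), we obtain $\supp_T(\mbf p) = \supp_T(\mbf p + \epsilon \mbf w)$ for every tile $T$. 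Summing the signed indicator identities that define $f$ in Theorem~\ref{thm:mainthm} yields $f(\mbf p) = f(\mbf p + \epsilon \mbf w)$, and by construction $\mbf p + \epsilon \mbf w$ is not on the boundary of any tile.

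The main obstacle is bookkeeping rather than depth: one must carefully combine the finitely many conditions (one per tile in $\mathcal S$, one per facet hyperplane) into a single valid choice of $\epsilon$, and verify that genericity of $\mbf w$ really does guarantee the ray avoids being trapped inside any facet hyperplane. Once all of this is collected, the lemma follows directly from Definition~\ref{def:halfopen}.
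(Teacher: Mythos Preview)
Your proposal is correct and follows essentially the same approach as the paper: both arguments unwind Definition~\ref{def:halfopen} to see that a small $\mbf w$-push preserves membership in every tile, and then conclude $f(\mbf p) = f(\mbf p + \epsilon\mbf w)$. The paper's proof is terser and does not spell out the local-finiteness reduction you carry out in your first paragraph; your version is more explicit about why a single $\epsilon$ can be chosen to work for all tiles simultaneously, but the underlying idea is identical.
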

\begin{proof}
    Consider any $T \in \mbf T(M)$. By Definition~\ref{def:halfopen}, if $\mbf p \in T$, then $\mbf p + \epsilon\mbf w$ is on the interior of $T$ for all sufficiently small $\epsilon>0$. Furthermore, it also follows from this definition that if $\mbf p \not\in T$, then $\mbf p + \epsilon\mbf w \not\in T$ for all sufficiently small $\epsilon>0$. In particular, we can choose an $\epsilon>0$ such that $\mbf p \in T$ if and only if $\mbf p + \epsilon\mbf w \in T$. By the definition of $f$, this means that $f(\mbf p + \epsilon\mbf w) = f(\mbf p)$.
\end{proof}
\begin{lemma}\label{lem:constant}
The function \(f\) is constant on all of \(\R^{r+k}\).
\end{lemma}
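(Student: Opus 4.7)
The plan is to reduce to points lying off all tile facets, connect any two such points by a straight line segment that crosses facets one at a time, and then handle each crossing via a small detour in the direction $\mbf w$ to which Theorem~\ref{thm:1pointconstant} applies.

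First, by Lemma~\ref{lem:offset}, for any $\mbf p \in \R^{r+k}$ there exists a nearby point $\mbf p + \epsilon\mbf w$ that is off the boundary of every tile and has the same $f$-value. Hence it suffices to prove $f(\mbf p_1) = f(\mbf p_2)$ for arbitrary points $\mbf p_1, \mbf p_2$ in the open set $U \subset \R^{r+k}$ obtained by deleting the union of all facets of tiles in $\mbf T(M)$. I would then connect $\mbf p_1$ and $\mbf p_2$ by the straight line segment $L$. Since every bounded region meets only finitely many tiles (tiles are bounded in diameter and indexed by $\Z^{r+k}$), the segment $L$ crosses only finitely many facets. A small perturbation of $\mbf p_2$ keeps it in $U$ and, by a standard transversality argument, can be chosen so that $L$ crosses facets one at a time and always at points that lie on no other facet and on no facet boundary.

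The heart of the argument is the following claim: if two connected components $C, C'$ of $U$ are separated along a codimension-one piece $B$ of a single facet $F$, then $f|_C = f|_{C'}$. To prove it, pick $\mbf q$ in the relative interior of $B$ that lies on no other facet and not on the boundary of $F$. Because $\mbf w$ is sufficiently generic, it is not tangent to $F$, so for all sufficiently small $\delta > 0$ the segment from $\mbf q - \delta\mbf w$ to $\mbf q + \delta\mbf w$ meets the union of facets only at the single point $\mbf q$. Applying Theorem~\ref{thm:1pointconstant} with $\mbf p = \mbf q - \delta\mbf w$ and $\epsilon = 2\delta$ yields $f(\mbf q + \delta\mbf w) = f(\mbf q - \delta\mbf w)$. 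The two endpoints lie in $C$ and $C'$ respectively by construction, so $f|_C = f|_{C'}$. Chaining this equality across each of the finitely many facet crossings along $L$ gives $f(\mbf p_1) = f(\mbf p_2)$.

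The main obstacle I expect is the transversality/perturbation step: arguing carefully that the segment $L$ can be perturbed so that it crosses facets one at a time, always at interior points of a single facet and avoiding every facet boundary. This amounts to showing that the set of ``bad'' perturbations of the endpoint $\mbf p_2$ is contained in a lower-dimensional subset of $\R^{r+k}$, which in turn follows from the local finiteness of the facet arrangement together with the generic choice of $\mbf w$ ensuring that $\mbf w$ is transverse to every facet.
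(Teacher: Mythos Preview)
Your proposal is correct and follows essentially the same approach as the paper: reduce via Lemma~\ref{lem:offset} to points off all facets, connect two such points by a path, and invoke Theorem~\ref{thm:1pointconstant} at each facet crossing. The paper simply asserts the existence of a curve that ``crosses on the interior of each facet and parallel to $\w$,'' whereas you spell out one way to build such a curve (straight segment with local $\w$-detours) and flag the transversality step; the underlying idea is the same.
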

\begin{proof}
    Let $\mbf p$ and $\mbf q$ be any two points on $\R^{r+k}$. 
    By Lemma~\ref{lem:offset}, it suffices to consider the case where $\mbf p$ and $\mbf q$ are not on the boundaries of any facets. 
    Consider a curve $C$ which connects $\mbf p$ to $\mbf q$ such that whenever $C$ crosses any facet, it crosses on the interior of that facet and parallel to $\w$. 
    By definition, \(f\) is constant when it does not cross a facet.
    By Theorem~\ref{thm:1pointconstant}, the value of $f$ is constant when \(C\) crosses a facet.
    Therefore the value of \(f\) is constant along \(C\), and the result follows. 
\end{proof}

We reiterate our main theorem, and provide its proof.

\vspace{ 2ex}
\noindent\textbf{Theorem~\ref{thm:mainthm}.} \textit{
 The function \(f(\mbf p) : \R^{r+k} \to \Z\), defined by
    \begin{equation*}f(\mbf p) := \left(\sum_{T \in \mbf T^+(M)}  \supp_{T}(\mbf p)\right) - \left(\sum_{T \in \mbf T^-(M)}  \supp_{T}(\mbf p)\right),\end{equation*}
    is constant with value $(-1)^{k}\sgn(\det(M))$. 
}

\begin{proof}
By Theorem~\ref{lem:constant}, we know that \(f\) is constant.
By Corollary~\ref{cor:avgvalue}, we know that the average value of \(f\) is $(-1)^{k}\sgn(\det(M))$.
Therefore \(f\) is constant with value $(-1)^{k}\sgn(\det(M))$.
\end{proof}

\section{Lower-Dimensional Slices}\label{sec:proj}
While it is possible to construct a signed tiling from any invertible matrix $M \in \R^{r+k}$, it is not immediately clear how to visualize such a tiling when $r+k>2$. One useful trick is to consider an $r$-dimensional slice of the tiling which fixes the last $k$-coordinates. 


\begin{definition}
    Let $S$ be a subset of $\R^{r+k}$. We will write $\rRestrict(S)$ for the intersection of $S$ with the plane whose last $k$ entries are $0$. 
\end{definition}

Notice that $\rRestrict(\R^{r+k})$ is naturally isomorphic to $\R^r$. Furthermore, since Theorem~\ref{thm:mainthm} is true for all $\mbf p \in \R^{r+k}$, the following is an immediate corollary. 

\begin{corollary}\label{cor:mainthmslice}
    The function \(f(\mbf p) : \R^r \to \Z\), defined by
    \begin{equation*}f(\mbf p) := \left(\sum_{T \in \mathbf T^+(M)}  \supp_{\rRestrict(T)}(\mbf p)\right) - \left(\sum_{T \in \mathbf T^-(M)}  \supp_{\rRestrict(T)}(\mbf p)\right),\end{equation*}
    is constant with value $(-1)^{k}\sgn(\det(M))$. 
\end{corollary}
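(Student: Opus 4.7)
The plan is to observe that the slice function in Corollary~\ref{cor:mainthmslice} is nothing more than the restriction of the function $f$ from Theorem~\ref{thm:mainthm} to the affine subspace where the last $k$ coordinates vanish. First I would identify, for each tile $T \subseteq \R^{r+k}$ and each $\mbf p \in \R^r$, the equivalence
\[
\supp_{\rRestrict(T)}(\mbf p) = 1 \iff \mbf p \in \rRestrict(T) \iff \begin{bmatrix} \mbf p \\ \mbf 0 \end{bmatrix} \in T \iff \supp_{T}\!\left(\begin{bmatrix} \mbf p \\ \mbf 0 \end{bmatrix}\right) = 1.
\]
Thus $\supp_{\rRestrict(T)}(\mbf p) = \supp_T((\mbf p, \mbf 0)^\top)$ as functions of $\mbf p \in \R^r$.

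Next, summing this identity over $T \in \mathbf T^+(M)$ and subtracting the sum over $T \in \mathbf T^-(M)$ shows that the function defined in Corollary~\ref{cor:mainthmslice} equals $g(\mbf p) := F((\mbf p, \mbf 0)^\top)$, where $F : \R^{r+k} \to \Z$ is the function of Theorem~\ref{thm:mainthm}. The justification for exchanging the sum with the evaluation at $(\mbf p, \mbf 0)^\top$ is merely that each sum is, for fixed input, a countable sum of $\{0,1\}$-valued terms in which only finitely many terms are non-zero (since only finitely many tiles contain any given point; this was implicit in Theorem~\ref{thm:mainthm}).

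Finally, Theorem~\ref{thm:mainthm} states that $F$ is identically equal to $(-1)^k \sgn(\det(M))$ on all of $\R^{r+k}$. In particular, its restriction to the subspace $\{(\mbf p, \mbf 0)^\top : \mbf p \in \R^r\}$ takes the same constant value, so $g$ is constant with value $(-1)^k \sgn(\det(M))$, which is precisely the claim. There is really no obstacle here: the corollary is a formal consequence of the fact that $\rRestrict$ is intersection with a fixed affine subspace, and the main theorem already handles every point of $\R^{r+k}$, including those of the form $(\mbf p, \mbf 0)^\top$.
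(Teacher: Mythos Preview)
Your proposal is correct and matches the paper's own reasoning: the paper simply notes that $\rRestrict(\R^{r+k})$ is naturally isomorphic to $\R^r$ and that Theorem~\ref{thm:mainthm} applies to every point of $\R^{r+k}$, so the corollary is immediate. Your argument spells out this identification in slightly more detail, but the approach is the same.
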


We will see that if $M$ satisfies a few minor conditions, the periodic tiling structure is also preserved when restricting to $\rRestrict(\R^{r+k})$. Recall that for an $(r+k) \times (r+k)$ matrix $M$, we write $\oline{C}_{[r+k]}(M)$ for the matrix formed by the last $k$ rows of $-M$.
\begin{figure}
\centering
\begin{tabular}{ccc}
    
\includegraphics[width=.32\linewidth]{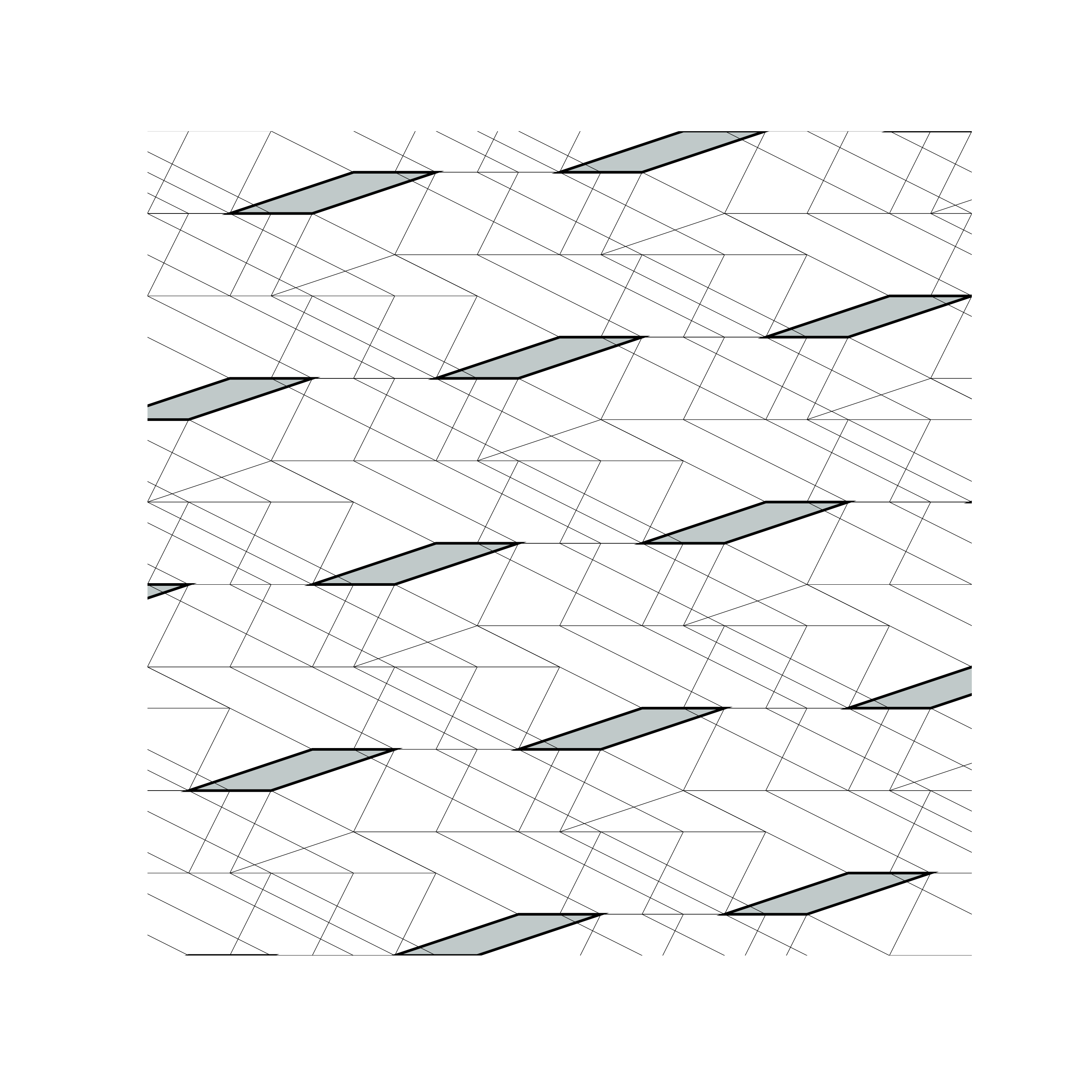} & 
\includegraphics[width=.32\linewidth]{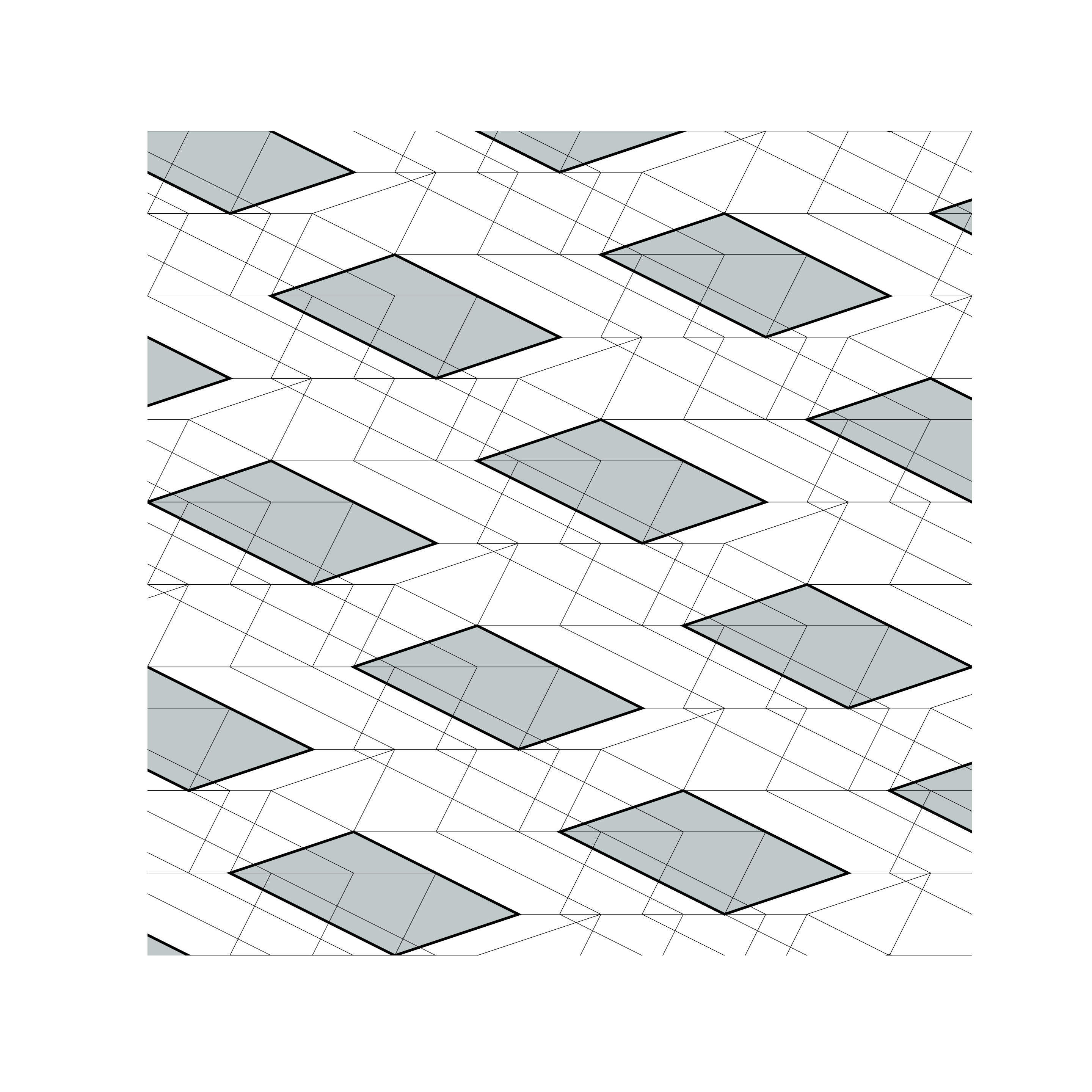}&
\includegraphics[width=.32\linewidth]{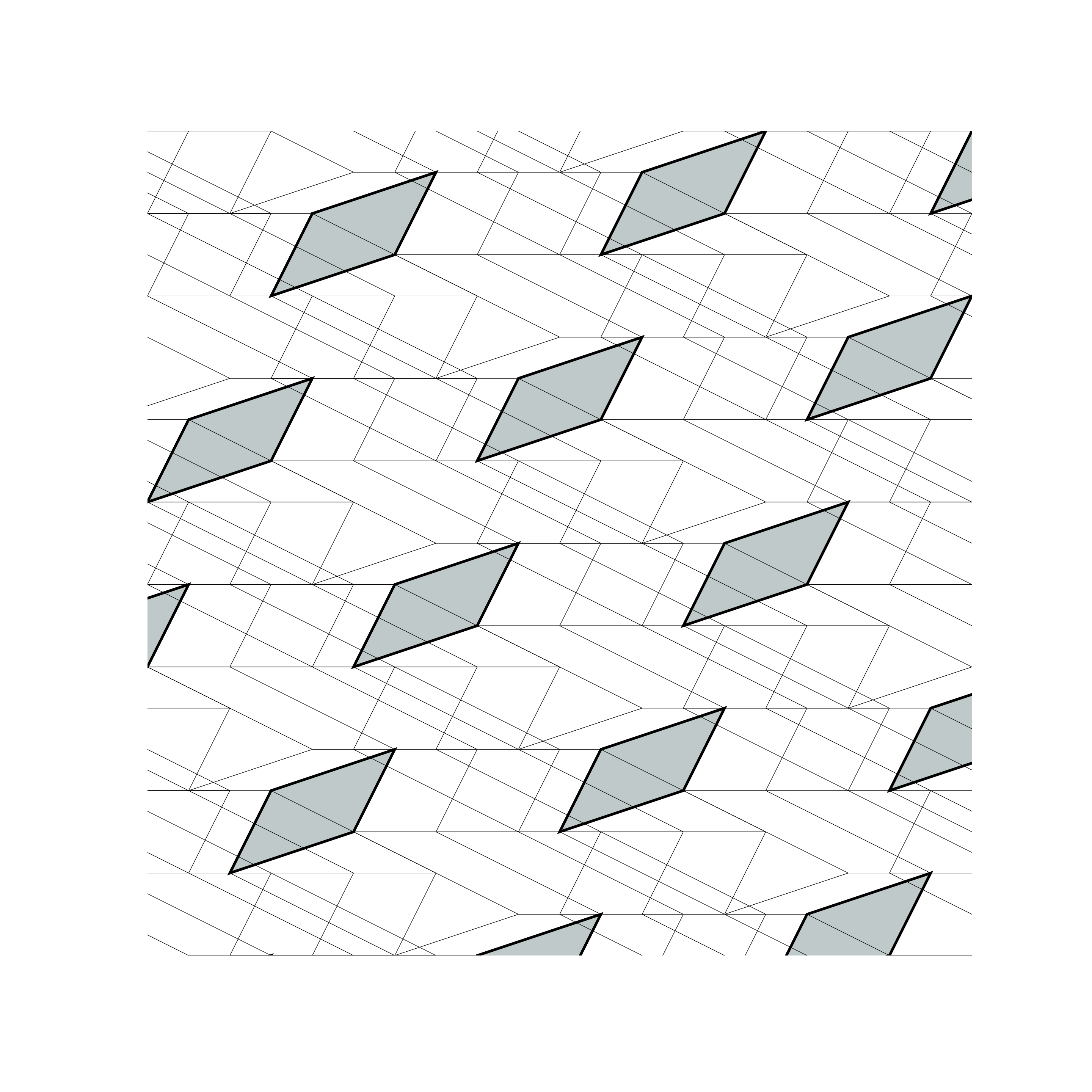} \\
$\det(C_{\{1,2\}}(M)) = -2$ & $\det(C_{\{1,3\}}(M)) = 10$ & $\det(C_{\{1,4\}}(M)) = 5$ \\
$\det(\oline {C}_{\{3,4\}}(M)) = -1$ & $\det(\oline {C}_{\{2,4\}}(M)) = -1$& $\det(\oline {C}_{\{2,3\}}(M)) = 1$\\
$\sgn(\{1,2,3,4\}) = 1$ & $\sgn(\{1,3,2,4\}) = -1$ & $\sgn(\{1,4,2,3\}) = 1$\\
   
\includegraphics[width=.32\linewidth]{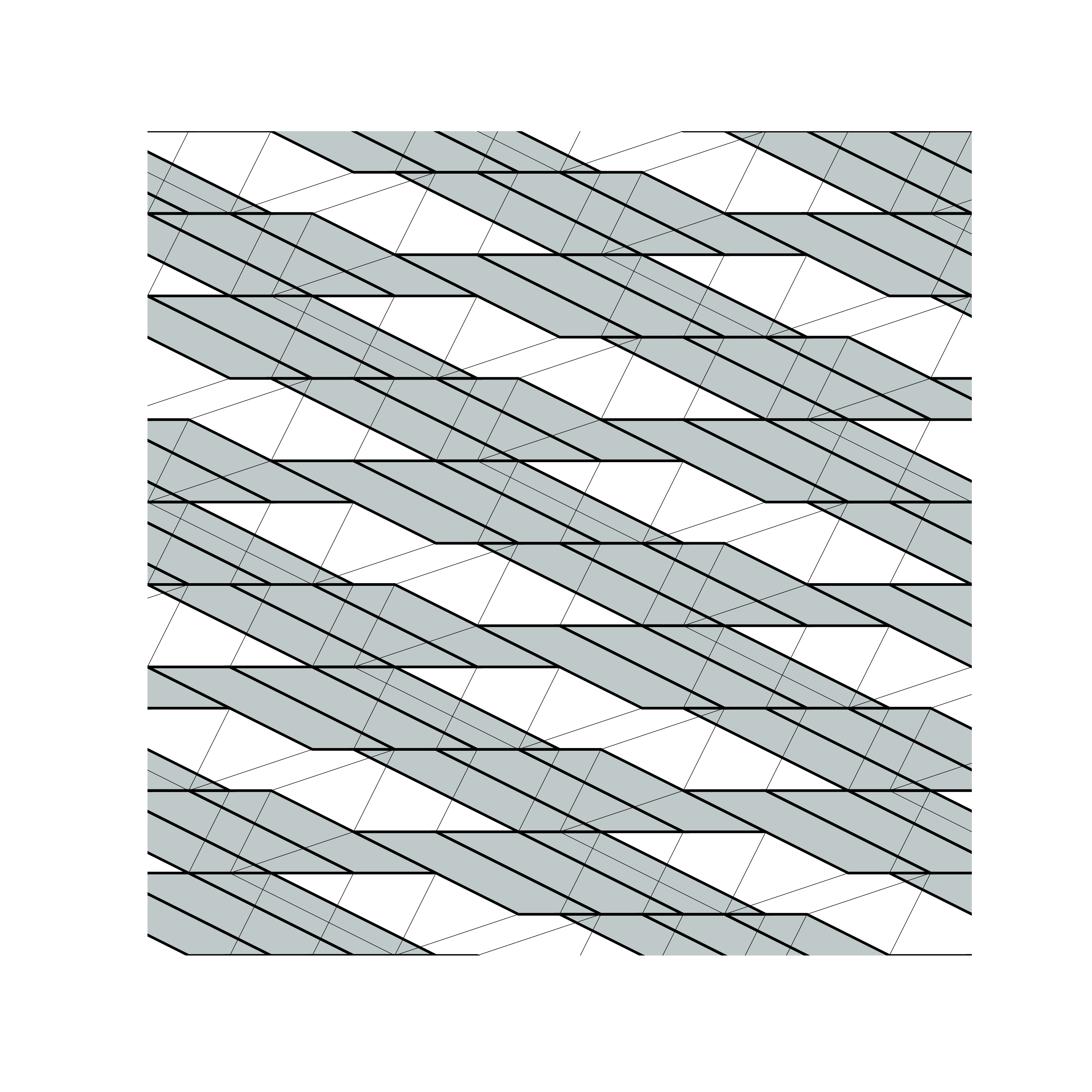}  & 
\includegraphics[width=.32\linewidth]{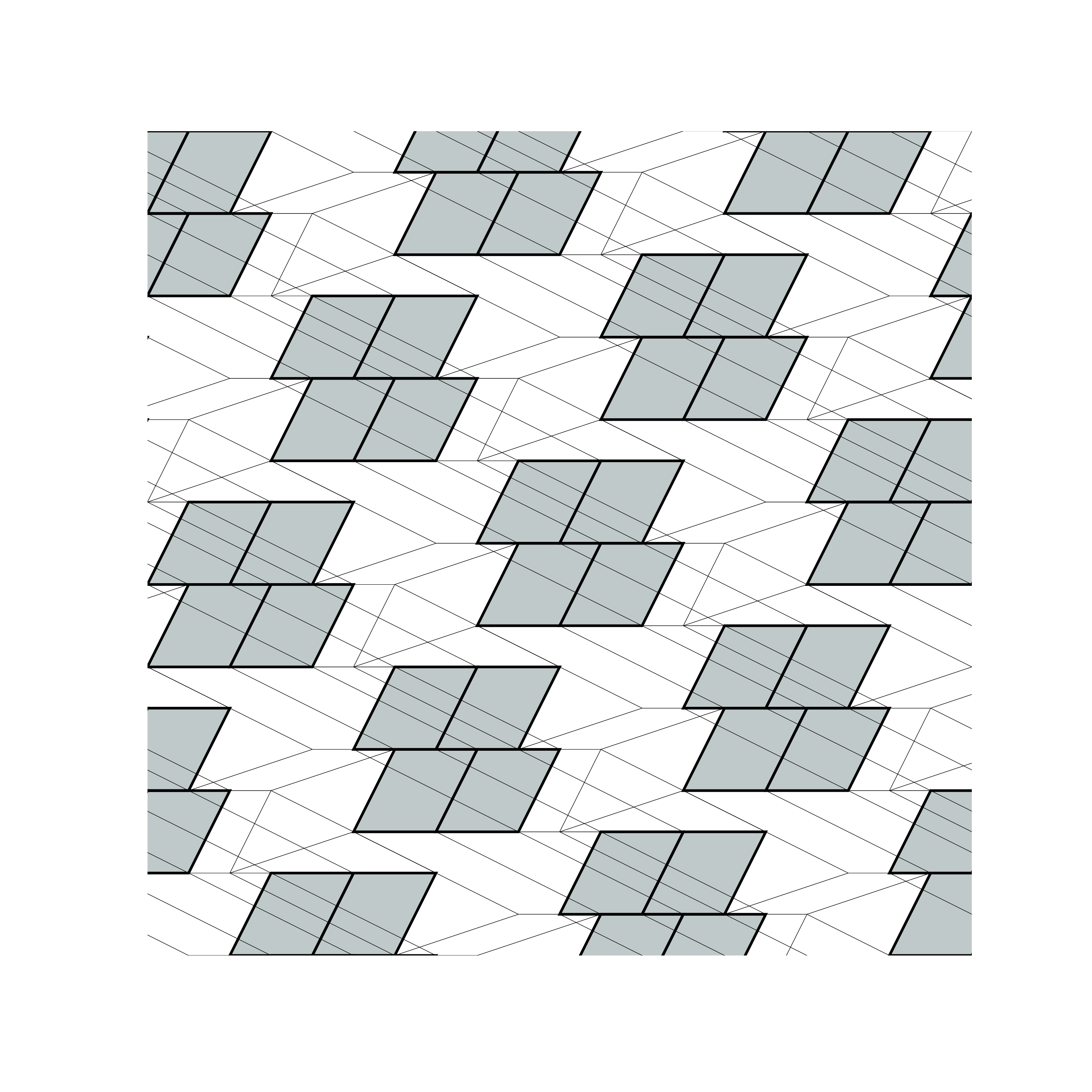} &
\includegraphics[width=.32\linewidth]{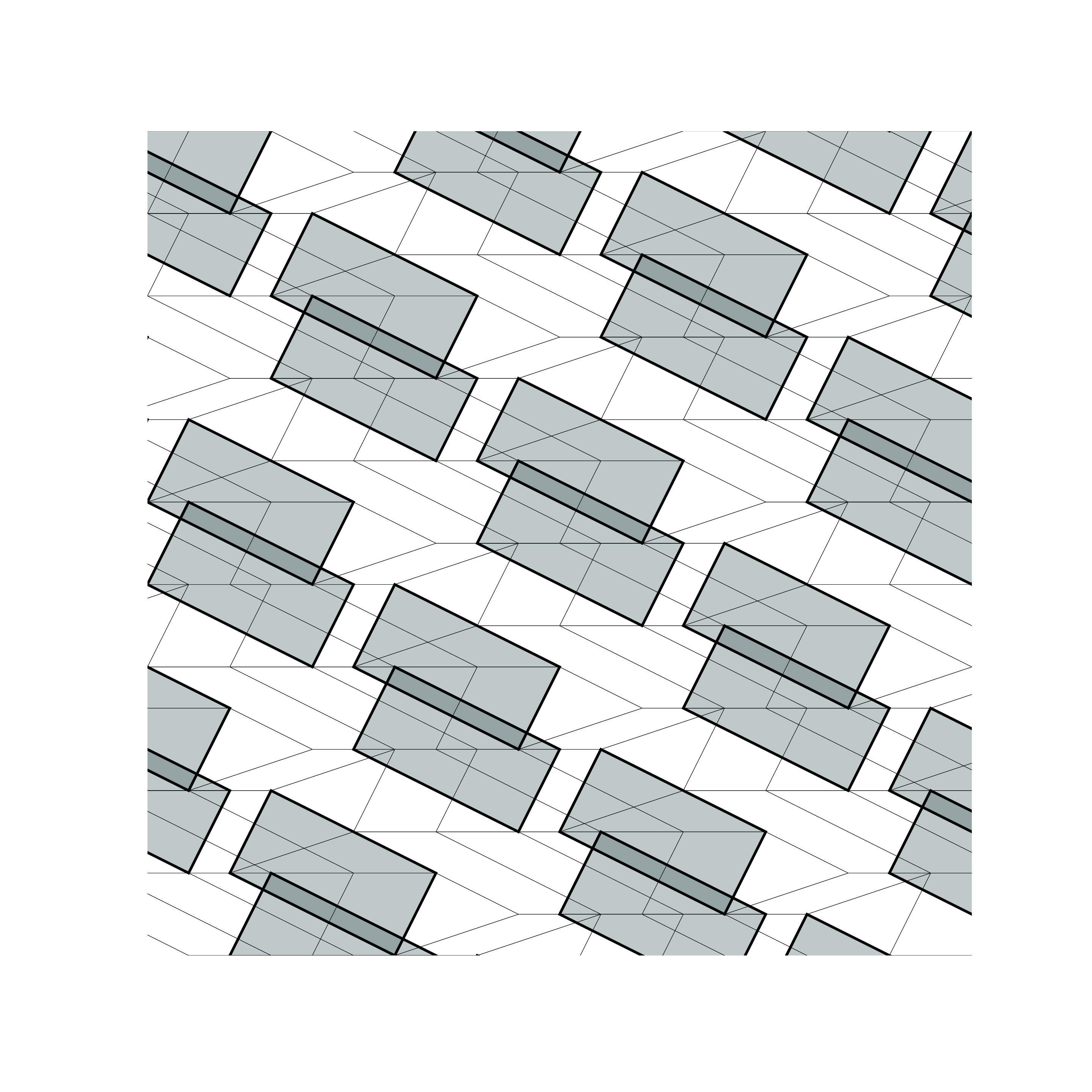}\\
$\det(C_{\{2,3\}}(M)) = 4$ & $\det(C_{\{2,4\}}(M)) = 4$ & $\det(C_{\{3,4\}}(M)) = -10$ \\
$\det(\oline {C}_{\{1,4\}}(M)) = 6$ & $\det(\oline {C}_{\{1,3\}}(M)) = -4$& $\det(\oline {C}_{\{1,2\}}(M)) = 2$\\
$\sgn(\{2,3,1,4\})=1$ & $\sgn(\{2,4,1,3\}) = -1$ & $\sgn(\{3,4,1,2\})=1$\\

\end{tabular}
\caption{Here we show the contributions of each of the six classes of tiles in Example~\ref{ex:projtiles1} to the tiling restricted to $\rRestrict(\R^{4})$. Notice that the magnitude of $\det(C_{\sigma}(M))$ corresponds to the size of each tile while the magnitude of $\det(\oline{C}_{\what\sigma}(M))$ corresponds to the relative frequency of each class of tiles.}
\label{fig:6projectedtiles}
\end{figure}
\begin{figure}
\includegraphics[width = .7\linewidth]{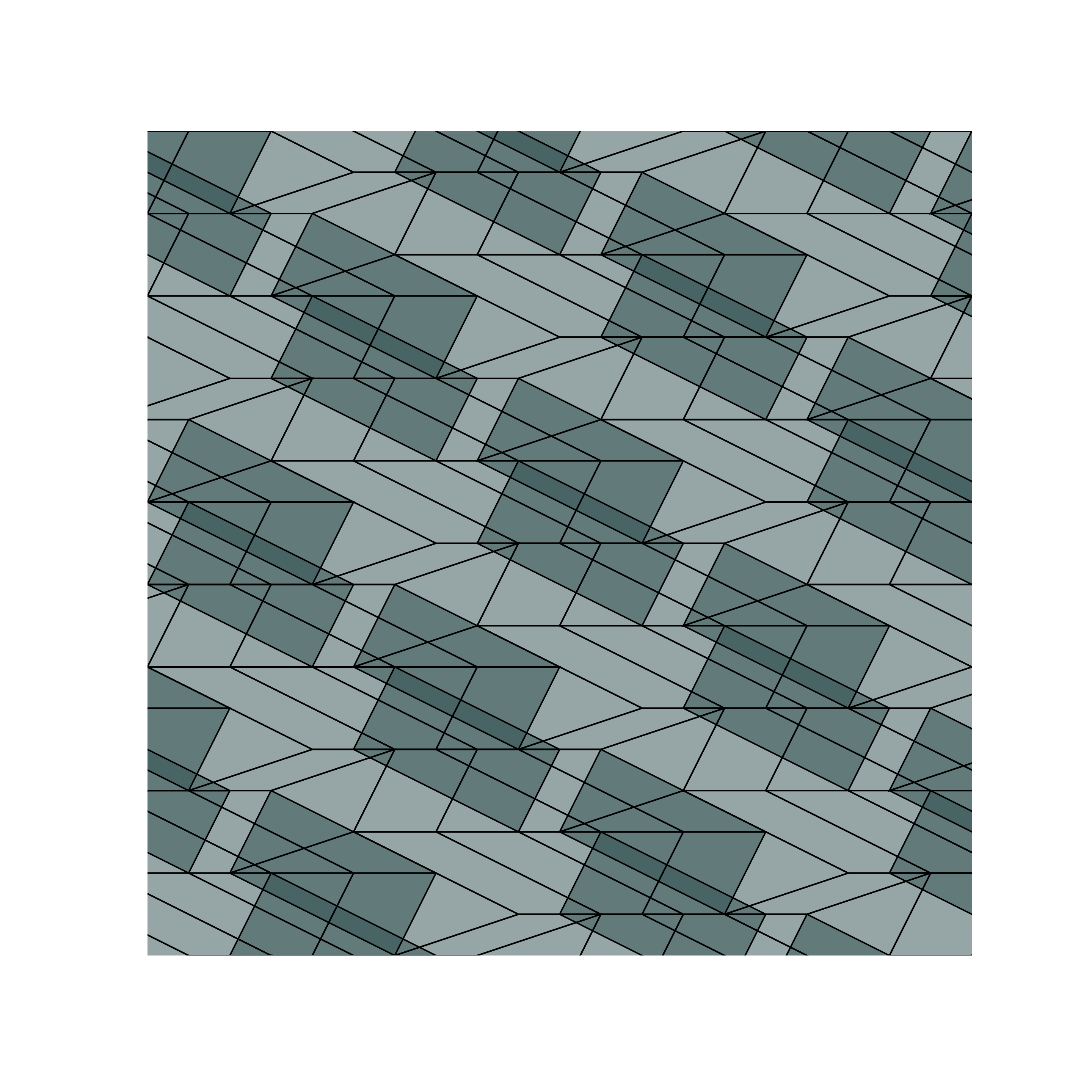}\\
\caption{This figure is formed by overlapping the first $5$ images in Figure~\ref{fig:6projectedtiles}. We showed in Example~\ref{ex:posnegtiles} that these are precisely the tiles in $\mbf T^+(M)$. If we ``subtract'' the final image in Figure~\ref{fig:6projectedtiles} from this picture, each point in $\R^2$ would be covered once.}\label{fig:phonecase}
\end{figure}
\begin{prop}
    Suppose that all of the entries in $\oline{C}_{[r+k]}(M)$ are integers. Further suppose that the GCD of the maximal minors of $\oline{C}_{[r+k]}(M)$ is $1$. Then, there exists some $r \times r$ matrix $B$ such that for each $\sigma \in \binom{[r+k]}{r}$, there is a collection $\mbf x_1,\dots,\mbf x_{\det(\oline{C}_{\what \sigma}(M))}$ of $\Z^r$ vectors which satisfy 
    \[\biguplus_{\mbf z \in \Z^{r+k}} \rRestrict(\mathcal T(\mbf z,\sigma)) = \biguplus_{i \in [\det(\oline{C}_{\what \sigma}(M))]}\left( \biguplus_{\mbf z \in \Z^r} \parp(C_\sigma(M)) + \mbf x_i + B\mbf z\right).\]

\end{prop}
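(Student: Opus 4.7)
The plan is to slice each tile through the hyperplane $\{\mbf p : \nproj(\mbf p) = \mbf 0\}$ and parameterize the resulting translates of $\parp(C_\sigma(M))$. A point $\mbf p = S_\sigma(M)\mbf y + M\mbf z \in \mathcal T(\mbf z, \sigma)$ satisfies $\nproj(\mbf p) = \mbf 0$ precisely when $\oline{C}_{\what\sigma}(M)\,\mbf y_{\what\sigma} = -\oline{C}_{[r+k]}(M)\,\mbf z$, by the block structure recorded in Lemma~\ref{lem:MSproj} (here $\mbf y_{\what\sigma}$ is the restriction of $\mbf y$ to the coordinates in $\what\sigma$). When $S_\sigma(M)$ is invertible so is $\oline{C}_{\what\sigma}(M)$, so this uniquely determines $\mbf y_{\what\sigma}$, and the slice is nonempty exactly when the resulting $\mbf y_{\what\sigma}$ lies in the appropriate half-open $k$-cube from Lemma~\ref{lem:halfopen}. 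In that case, the slice equals $\parp(C_\sigma(M)) + C_{[r+k]}(M)\mbf z$.

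Next, I will identify the $\mbf z$'s yielding nonempty slices using Smith normal form. Setting $\mbf m := \oline{C}_{[r+k]}(M)\mbf z \in \Z^k$, the hypothesis that the maximal minors of $\oline{C}_{[r+k]}(M)$ have GCD $1$ implies that the map $\oline{C}_{[r+k]}(M) : \Z^{r+k} \to \Z^k$ is surjective, and its kernel $K$ is a sublattice of $\Z^{r+k}$ of rank $r$. The nonemptiness condition forces $\mbf m$ into a half-open parallelepiped $P := -\oline{C}_{\what\sigma}(M) \cdot Q$, where $Q$ is the half-open $k$-cube of Lemma~\ref{lem:halfopen}. Since $P$ is a half-open fundamental domain for the sublattice $\oline{C}_{\what\sigma}(M)\,\Z^k \subseteq \Z^k$, it contains exactly $|\det(\oline{C}_{\what\sigma}(M))|$ integer points $\mbf m_1,\ldots,\mbf m_{|\det(\oline{C}_{\what\sigma}(M))|}$. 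Choosing any preimage $\mbf z_i$ of $\mbf m_i$ under $\oline{C}_{[r+k]}(M)$, the set of valid $\mbf z$'s decomposes as $\bigsqcup_i (\mbf z_i + K)$.

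To produce the common matrix $B$, I will use invertibility of $M$: since $K \otimes \R$ is the kernel of the lower block $-\oline{C}_{[r+k]}(M)$ of $M$, any nonzero vector there would have to be sent to a nonzero vector by the upper block $C_{[r+k]}(M)$, so $C_{[r+k]}(M)|_{K\otimes\R}$ is a linear isomorphism onto $\R^r$. Hence for any $\Z$-basis $\mbf k_1,\dots,\mbf k_r$ of $K$, the matrix $B := [\,C_{[r+k]}(M)\mbf k_1\ |\ \cdots\ |\ C_{[r+k]}(M)\mbf k_r\,]$ is $r\times r$ and invertible, and depends only on $M$, not on $\sigma$. Setting $\mbf x_i := C_{[r+k]}(M)\mbf z_i$, we obtain
\[\biguplus_{\mbf z \in \Z^{r+k}} \rRestrict(\mathcal T(\mbf z,\sigma)) = \biguplus_i \biguplus_{\mbf k \in K}\bigl(\parp(C_\sigma(M)) + C_{[r+k]}(M)(\mbf z_i + \mbf k)\bigr) = \biguplus_i \biguplus_{\mbf n \in \Z^r}\bigl(\parp(C_\sigma(M)) + \mbf x_i + B\mbf n\bigr),\]
which is the desired statement.

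The main technical subtlety to carry through the above steps is tracking the half-open boundary conventions: one must verify that the $\mbf w''$-determined half-open cube for $\mbf y_{\what\sigma}$ maps under $-\oline{C}_{\what\sigma}(M)$ to a genuine half-open fundamental domain of $\oline{C}_{\what\sigma}(M)\,\Z^k$ inside $\Z^k$, so that the integer-point count is exactly $|\det(\oline{C}_{\what\sigma}(M))|$ and the preimages $\mbf z_i + K$ give a clean partition. Once this bookkeeping is in place, the rest of the argument reduces to a Smith normal form computation combined with the slicing identity coming from the block decomposition of $S_\sigma(M)$.
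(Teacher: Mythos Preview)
Your argument is correct and follows the same route as the paper's sketch: what the paper phrases as ``applying integer column operations to put $M$ in the block form $\left[\begin{smallmatrix} A & B \\ I_k & 0\end{smallmatrix}\right]$'' is exactly your choice of a $\Z$-basis of $K=\ker(\oline{C}_{[r+k]}(M))$ (yielding the block $B$) together with lifts of the standard basis of $\Z^k$ (yielding the block $A$), and in both versions the count of $|\det(\oline C_{\what\sigma}(M))|$ translates comes from the integer points in a half-open fundamental parallelepiped for $\oline{C}_{\what\sigma}(M)\Z^k$. One minor slip: by Lemma~\ref{lem:MSproj} the slicing equation reads $\oline{C}_{\what\sigma}(M)\,\mbf y_{\what\sigma}=\oline{C}_{[r+k]}(M)\,\mbf z$ (no minus sign), so $P=\oline{C}_{\what\sigma}(M)\cdot Q$ rather than $-\oline{C}_{\what\sigma}(M)\cdot Q$, but this does not affect the argument.
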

\begin{proof} We only give a sketch of the proof here. For more discussion, see~\cite[Section 7]{multijections}. 

The main idea is that each tile of the form $\mathcal T(\mbf z,\sigma)$ maps to a collection of $|\det(\oline{C}_{\what \sigma}(M))|$ many translates of $\parp(C_\sigma(M))$ in $\rRestrict(\R^{r+k})$. This follows after applying column operations to $M$ in order to get a matrix of the form 
\[\begin{bmatrix} A & B \\ I_k & 0\end{bmatrix},\]
where $A$ and $B$ are integer matrices, $I_k$ is the $k\times k$ identity matrix, and $0$ is the $k \times r$ all zeros matrix. 

\end{proof}

\begin{remark}
Since we want to preserve the column lattice of $M$, our column operations do not allow for multiplication by anything other than $1$ or $-1$. This is the reason why we require the minors of $\oline{C}_{[r+k]}(M)$ to have GCD $1$; otherwise it would be impossible to get $I_k$ on the bottom left of the block matrix. Additionally, note that if we had fixed the last $k$ coordinates to be anything other than $\mbf 0$ in $\rRestrict(\R^{r+k})$, we would get the exact same tiling under some translation. 
\end{remark}

\begin{example} \label{ex:projtiles2}
Recall our running example with  \[M = \begin{bmatrix} 
3 & 2 & -4 & 1\\
1 & 0 & 2 & 2\\
2 & 0 & -1 & 1\\
0 & 1 & -2 & 3\\
\end{bmatrix},\] and $\w = (1,1,1,1)^\top$. Our signed tiling of $\R^4$ is made of translations of $6$ tiles. In particular, each tile is a translate of $\parp(S_{\sigma}(M))$ for $\sigma \in \binom{[4]}{2}$. After taking a slice in the first 2 dimensions, we still have $6$ kinds of tiles, but now they are translates of $\parp(C_\sigma(M))$ for $\sigma \in \binom{[4]}{2}$. See Figure~\ref{fig:6projectedtiles} for each class of tiles. 

We showed in Example~\ref{ex:posnegtiles} that the first 5 classes of tiles are made up of positive tiles while the final class is made up of negative tiles. 
Figure~\ref{fig:phonecase} gives an enlarged view of the collection of all positive tiles. 
By Corollary~\ref{cor:mainthmslice}, this is the same picture obtained by adding the negative tiles to the set of all points in $\R^2$. 
\end{example}

\section{Open Problems}\label{sec:questions}

The main motivation for this project was an attempt to gain a deeper understanding of a curious phenomenon (in particular Corollary~\ref{cor:traditionaltiling}). While we were successful at generalizing this statement to Theorem~\ref{thm:mainthm}, this new result is just as surprising. We expect that a deeper exploration of this problem will lead more surprises in the future, and we have several specific directions in mind the explore. 

Our initial approach when attempting to prove Theorem~\ref{thm:mainthm} was to consider an arbitrary point in $\R^{r+k}$ (or $\Pi(M)$) and compute which tiles contain this point. A direct proof of this form would give additional insight about the tiling, since it would allow us to calculate the number and type of tiles containing a given point. However, this method was more challenging than we expected, and we ended up relying on an indirect method by focusing on the facets and proving that $f$ is constant. 
\begin{open} What is the best algorithm to determine which tiles contain a given point? Can such an algorithm be used to give a more direct proof of Theorem~\ref{thm:mainthm}? \end{open}

Another promising method to prove Theorem~\ref{thm:mainthm} is to use Fourier analysis, applying similar methods to those used in~\cite{Ehrhart} (see also~\cite{Robins}). Perhaps these ideas could lead to a more elegant proof once the background is established. 

\begin{open} Is there a proof for Theorem~\ref{thm:mainthm} using Fourier analysis?\end{open} 

In addition to an alternate proof of the main theorem, we would also be interested in generalizing this result. As written, our construction relies on a choice of coordinates. While it should be possible to translate the statement into coordinate-free language, this is not a trivial task. Nevertheless, such a generalization would likely provide additional insight into the underlying phenomenon behind our construction. 

\begin{open} Is there a coordinate-free analogue to Theorem~\ref{thm:mainthm} or Corollary~\ref{cor:traditionaltiling}?\end{open}

\section*{Data Availability}
Data sharing is not applicable to this article as no data sets were generated or analysed during the current study.

\section*{Acknowledgments}  
We would first like to thank the organizers of the 33rd International Conference on Formal Power Series and Algebraic Combinatorics (FPSAC), which is where our exploration of this signed tiling began. We also would like to thank the anonymous reviewer for their excellent comments and suggestions. 

The first author would like to thank the organizers of the Combinatorial Coworkspace in Kleinwalsertal for providing an engaging environment to discuss this with many people, especially Marie-Charlotte Brandenburg.
They would also like to thank Bennet Goeckner for helpful comments on an earlier draft of this work.

The second author would like to thank Milen Ivanov and Sinai Robins for helping him to understand the analytic perspective as well as Julia Schedler for many lively discussions about the construction and visualizations. 
\bibliographystyle{plain}
\bibliography{biblio}

\begin{thebibliography}{1}

\bibitem{BBY}
Spencer Backman, Matthew Baker, and Chi~Ho Yuen.
\newblock Geometric bijections for regular matroids, zonotopes, and {Ehrhart}
  theory.
\newblock {\em Forum of Mathematics, Sigma}, 7:e45, 2019.

\bibitem{Ehrhart}
Ricardo Diaz and Sinai Robins.
\newblock The ehrhart polynomial of a lattice polytope.
\newblock {\em Annals of Mathematics}, 145(3):503--518, 1997.

\bibitem{multijections}
Alex McDonough.
\newblock A family of matrix-tree multijections.
\newblock {\em Algebraic Combinatorics}, 4(5):795--822, 2021.

\bibitem{alexthesis}
Alex McDonough.
\newblock {\em Higher-Dimensional Sandpile Groups and Matrix-Tree
  Multijections}.
\newblock PhD thesis, Brown University, 2021.

\bibitem{Nonlinear}
Alexei Morozov and Valery Dolotin.
\newblock {\em Introduction to non-linear algebra}.
\newblock World Scientific, 2007.

\bibitem{Robins}
Sinai Robins.
\newblock A friendly introduction to fourier analysis on polytopes.
\newblock {\em arXiv preprint arXiv:2104.06407}, 2021.

\end{thebibliography}
\end{document}